\let\old@setaddresses\@setaddresses
\def\@setaddresses{\bigskip{\parindent 0pt\let\scshape\relax\let\ttfamily\relax\old@setaddresses}}
\newtheorem{theorem}{Theorem}
\newtheorem{proposition}[theorem]{Proposition}
\newtheorem{lemma}[theorem]{Lemma}
\theoremstyle{remark}
\title{On a combinatorial generation problem of Knuth}
\author{Arturo Merino}
\address[Arturo Merino]{Department of Mathematics, TU Berlin, Germany}
\email{merino@math.tu-berlin.de}
\author{Ond\v{r}ej Mi\v{c}ka}
\address[Ond\v{r}ej Mi\v{c}ka]{Department of Theoretical Computer Science and Mathematical Logic, Charles University, Prague, Czech Republic}
\email{micka@ktiml.mff.cuni.cz}
\author{Torsten M\"utze}
\address[Torsten M\"utze]{Department of Computer Science, University of Warwick, United Kingdom \& Department of Theoretical Computer Science and Mathematical Logic, Charles University, Prague, Czech Republic}
\email{torsten.mutze@warwick.ac.uk}
\thanks{This work was supported by Czech Science Foundation grant GA~19-08554S and by German Science Foundation grant~413902284. Arturo Merino was also supported by ANID Becas Chile 2019-72200522.}
\thanks{An extended abstract of this paper has been accepted for presentation at the 32nd Annual ACM-SIAM Symposium on Discrete Algorithms (SODA 2021).}
\begin{document}

\begin{abstract}
The well-known middle levels conjecture asserts that for every integer $n\geq 1$, all binary strings of length $2(n+1)$ with exactly $n+1$ many 0s and 1s can be ordered cyclically so that any two consecutive strings differ in swapping the first bit with a complementary bit at some later position.
In his book `The Art of Computer Programming Vol.~4A' Knuth raised a stronger form of this conjecture (Problem~56 in Section~7.2.1.3), which requires that the sequence of positions with which the first bit is swapped in each step of such an ordering has $2n+1$ blocks of the same length, and each block is obtained by adding $s=1$ (modulo $2n+1$) to the previous block.
In this work, we prove Knuth's conjecture in a more general form, allowing for arbitrary shifts $s\geq 1$ that are coprime to~$2n+1$.
We also present an algorithm to compute this ordering, generating each new bitstring in $\cO(n)$ time, using $\cO(n)$ memory in total.
\end{abstract}

\keywords{Hamilton cycle, Gray code, middle levels conjecture}

\maketitle

\section{Introduction}

In computer science and mathematics we frequently encounter various fundamental classes of combinatorial objects such as subsets, permutations, combinations, partitions, trees etc.
There are essentially three recurring algorithmic tasks we want to perform with such objects, namely counting (how many objects are there?), random generation (pick one object uniformly at random), and exhaustive generation (generate every object exactly once).
The focus of this paper is on the latter of these tasks, namely algorithms for exhaustively generating a class of combinatorial objects.
This research area has flourished tremendously, in particular since the advent of powerful computers, and many of the gems it has produced are treated in depth in the most recent volume of Knuth's seminal series `The Art of Computer Programming'~\cite{MR3444818} (see also the classical book by Nijenhuis and Wilf~\cite{MR0396274}).

\subsection{Combination generation}

One of the basic classes of combinatorial objects we want to generate are \emph{$(k,\ell)$-combinations}, i.e., all ways of choosing a subset of a fixed size~$k$ from the ground set~$[n]:=\{1,\ldots,n\}$ where $n:=k+\ell$.
In a computer we conveniently encode every set by a bitstring of length~$n$ with exactly $k$ many 1s, where the $i$th bit is~1 if and only if the element~$i$ is contained in the set.
For instance, all 2-element subsets of the 4-element ground set $\{1,2,3,4\}$ are $12$, $13$, $14$, $23$, $24$, $34$, where we omit curly brackets and commas for simplicity, and the corresponding bitstrings are $1100,1010,1001,0110,0101,0011$.
As we are concerned with fast generation algorithms, a natural approach is to generate a class of objects in an order such that any two consecutive objects differ only by a small amount, i.e., we aim for a \emph{Gray code} ordering.
In general, a combinatorial Gray code is a minimum change ordering of objects for some specified closeness criterion, and fast algorithms for generating such orderings have been discovered for a large variety of combinatorial objects of interest (see~\cite{MR1491049,MR3444818}).
For combinations, we aim for an ordering where any two consecutive sets differ only in exchanging a single element, such as $(12,13,14,24,34,23)=(1\ul{1}\ul{0}0,10\ul{1}\ul{0},\ul{1}\ul{0}01,0\ul{1}\ul{0}1,0\ul{0}1\ul{1},\ul{0}1\ul{1}0)$.
As we can see, this corresponds to swapping a 0-bit with a 1-bit in the bitstring representation in every step, where the two swapped bits are underlined in the example.

\subsection{The middle levels conjecture}

In the 1980s, Buck and Wiedemann~\cite{MR737262} conjectured that all $(n+1,n+1)$-combinations can be generated by \emph{star transpositions} for every $n\geq 1$, i.e., the element~1 either enters or leaves the set in each step.
In terms of bitstrings, this means that in every step the first bit is swapped with a complementary bit at a later position.
The ordering is also required to be cyclic, i.e., this transition rule must also hold when going from the last combination back to the first.
The corresponding \emph{flip sequence~$\alpha$} records the position of the bit with which the first bit is swapped in each step, where positions are indexed by~$0,\ldots,2n+1$, so the entries of~$\alpha$ are from the set~$\{1,\ldots,2n+1\}$ and $\alpha$ has length~$N:=\binom{2(n+1)}{n+1}$.
For example, a cyclic star transposition ordering of $(2,2)$-combinations is $(12,23,13,34,14,24)=(\ul{1}1\ul{0}0,\ul{0}\ul{1}10,\ul{1}01\ul{0},\ul{0}0\ul{1}1,\ul{1}\ul{0}01,\ul{0}10\ul{1})$, and the corresponding flip sequence is~$\alpha=213213$.
Buck and Wiedemann's conjecture was raised independently by Havel~\cite{MR737021} and became known as \emph{middle levels conjecture}.
This name originates from an equivalent formulation of the problem, which asks for a Hamilton cycle through the middle two levels of the $(2n+1)$-dimensional hypercube.
This conjecture received considerable attention in the literature (see \cite{MR1350586, MR1329390, MR2046083, MR2195731, MR2609124, MR962224, MR962223, MR1268348}), as it lies at the heart of several related combinatorial generation problems.
It is also mentioned in the popular books by Winkler~\cite{MR2034896} and by Diaconis and Graham~\cite{MR2858033}, and in Gowers' survey~\cite{MR3584100}.
Eventually, the middle levels conjecture was solved by M\"utze~\cite{MR3483129} and a simplified proof appeared in~\cite{gregor-muetze-nummenpalo:18}.
Moreover, a constant-time algorithm for computing a star transposition ordering for $(n+1,n+1)$-combinations for every $n\geq 1$ was presented in~\cite{MR4075363}.

\subsection{Knuth's stronger conjecture}

In Problem~56 in Section~7.2.1.3 of his book~\cite{MR3444818} (page~735), which was ranked as the hardest open problem in the book with a difficulty rating of~49/50, Knuth raised a stronger version of the middle levels conjecture, which requires additional symmetry in the flip sequence.
Specifically, Knuth conjectured that there is a star transposition ordering of $(n+1,n+1)$-combinations for every $n\geq 1$ such that the flip sequence~$\alpha$ has a block structure $\alpha=(\alpha_0,\alpha_1,\ldots,\alpha_{2n})$, where each block~$\alpha_i$ has the same length $N/(2n+1)$ and is obtained from the initial block~$\alpha_0$ by element-wise addition of~$i$ modulo~$2n+1$ for all $i=1,\ldots,2n$.
As the entries of~$\alpha$ are from~$\{1,\ldots,2n+1\}$, the numbers $1,\ldots,2n+1$ are chosen as residue class representatives for this addition, rather than $0,\ldots,2n$.
In other words, such a flip sequence~$\alpha$ has cyclic symmetry and the initial block~$\alpha_0$ alone encodes the entire flip sequence~$\alpha$ by a factor of~$2n+1$ more compactly.
The compression factor~$2n+1$ is best possible, and it arises from the fact that every bitstring obtained by removing the first bit of an $(n+1,n+1)$-combination has exactly $2n+1$ distinct cyclic rotations.
Also note that $N/(2n+1)=2C_n$, where $C_n:=\frac{1}{n+1}\binom{2n}{n}$ is the $n$th Catalan number.
For instance, for $n=2$ we have $N=20$, and all $(3,3)$-combinations can be generated from~$111000$ by the flip sequence $(4134\;5245\;1351\;2412\;3523)$, i.e., with initial block $\alpha_0:=4134$.
Similarly, for $n=3$ we have $N=70$, and all $(4,4)$-combinations can be generated from~$11110000$ by the flip sequence defined by the initial block $\alpha_0:=6253462135$.
The entire ordering of combinations obtained for this example is shown in the first column in Figure~\ref{fig:c44}.
In fact, the compact encoding of the flip sequence required in Knuth's problem was the main tool researchers used in tackling the middle levels conjecture experimentally, as it allows restricting the search space by a factor of~$2n+1$ (which yields an exponential speedup for brute-force searches).
This approach was already employed by Buck and Wiedemann~\cite{MR737262} for $n=3,4,5$, and was later refined and implemented on powerful computers by Shields, Shields, and Savage~\cite{MR2548541} for values up to~$n\leq 17$ and by Shimada and Amano \cite{shimada-amano} for $n=18,19$.

As mentioned before, the middle levels conjecture is equivalent to asking for a Hamilton cycle through the middle two levels of the $(2n+1)$-dimensional hypercube.
This subgraph of the hypercube is vertex-transitive, as is the entire graph.
Knuth's problem therefore asks for a highly symmetric Hamilton cycle in a highly symmetric graph.
Specifically, such a Hamilton cycle is invariant under a non-trivial automorphism of the graph, in this case rotation of the bitstrings, and the entire cycle of length~$N$ is the image of a path of length~$N/(2n+1)$ under repeated applications of this automorphism.

\subsection{Our results}

Unfortunately, none of the flip sequences constructed in~\cite{MR3483129,gregor-muetze-nummenpalo:18,MR4075363} to solve the middle levels conjecture satisfies the stronger symmetry requirements of Knuth's problem.
The main contribution of this work is to solve Knuth's symmetric version of the middle levels conjecture in the following more general form, allowing for arbitrary shifts; see Figure~\ref{fig:c44} for illustration.

\begin{figure}
\makebox[0cm]{ 
\includegraphics{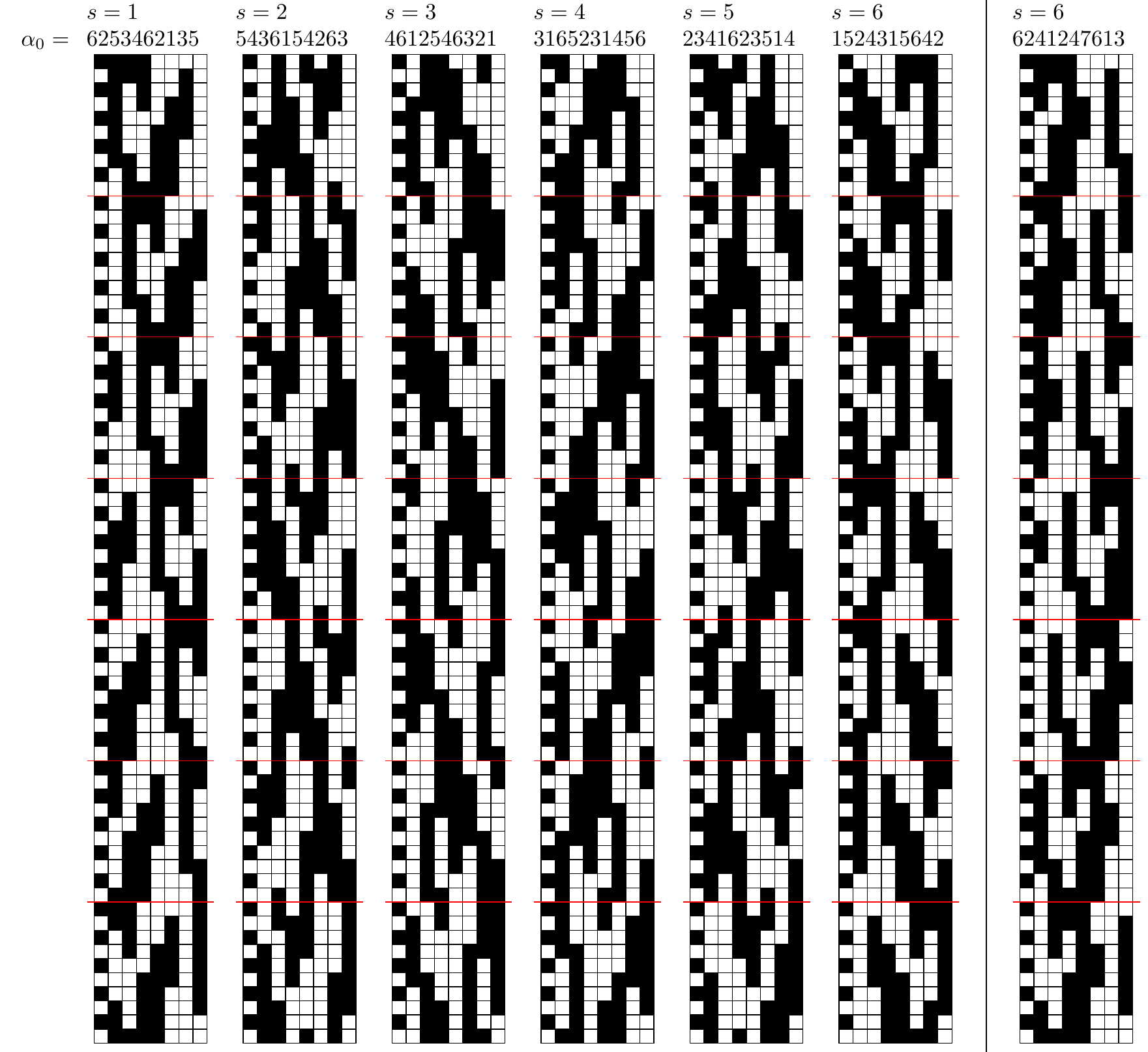}
}
\caption{Star transposition Gray codes for $(4,4)$-combinations obtained from Theorem~\ref{thm:star} for $n=3$ and $s=1,2,\ldots,6$ (two distinct solutions for $s=6$ are shown), where 1-bits are drawn as black squares, 0-bits as white squares.
The initial block~$\alpha_0$ of the flip sequence is shown at the top, and the division of all $N=70$ combinations into $2n+1=7$ blocks of length $2C_n=10$ is highlighted by horizontal lines.
}
\label{fig:c44}
\end{figure}

\begin{theorem}
\label{thm:star}
For any $n\geq 1$ and $1\leq s\leq 2n$ that is coprime to $2n+1$, there is a star transposition ordering of all $(n+1,n+1)$-combinations such that the corresponding flip sequence is $\alpha=(\alpha_0,\alpha_1,\ldots,\alpha_{2n})$, and each block~$\alpha_i$ is obtained from the initial block~$\alpha_0$ by element-wise addition of~$i\cdot s$ modulo~$2n+1$ for all $i=1,\ldots,2n$. 
\end{theorem}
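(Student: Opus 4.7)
My plan is to reformulate Theorem~\ref{thm:star} as a Hamilton cycle problem in a quotient of the middle-levels graph, and then construct the required cycle by refining the tree-gluing arguments that underpin the known solutions of the middle levels conjecture.

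Let $G_n$ be the graph on $(n+1,n+1)$-combinations whose edges are star transpositions, and let $\rho$ be the automorphism of $G_n$ that fixes bit $0$ and cyclically shifts the bits at positions $1,\ldots,2n+1$ by one place. Then $\rho$ has order $2n+1$, and since $\gcd(n,2n+1)=\gcd(n+1,2n+1)=1$ no vertex of $G_n$ can be fixed by a nontrivial power of $\rho$ (a period $d\mid 2n+1$ with $d<2n+1$ would force $m/d\ge 3$ to divide $n$ or $n+1$), so every $\langle\rho\rangle$-orbit has size exactly $2n+1$ and the quotient $H_n:=G_n/\langle\rho\rangle$ has $2C_n$ vertices. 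Because $\rho$ sends the star edge with swap position $p$ to the one with swap position $p+1$, every $\langle\rho\rangle$-orbit of edges contains exactly one edge per swap position. Given any Hamilton cycle $C$ of $H_n$, the unique lift started at a vertex $x\in G_n$ returns after $L=2C_n$ steps to $\rho^{t}(x)$ for a well-defined monodromy $t\in\mathbb{Z}/(2n+1)\mathbb{Z}$ independent of $x$. By the swap-position equivariance of $\rho$, iterating the lift produces a flip sequence of block length $L$ whose blocks satisfy $\alpha_i=\alpha_0+i\,t\pmod{2n+1}$ elementwise, and this lift is a single Hamilton cycle of $G_n$ if and only if $\gcd(t,2n+1)=1$. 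Hence Theorem~\ref{thm:star} for shift $s$ is equivalent to the existence of a Hamilton cycle of $H_n$ whose monodromy equals exactly $s$.

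To construct such cycles I would build on the inductive tree-gluing framework of \cite{gregor-muetze-nummenpalo:18,MR4075363}, which assembles a Hamilton cycle in the middle-levels graph from short canonical paths indexed by rooted plane trees with $n+1$ edges. The automorphism $\rho$ acts on the tree side by cyclic relabelling, so the building blocks descend naturally to $H_n$; the existing connectivity/parity argument should adapt, with routine but careful bookkeeping, to show that the glued local paths form a single spanning cycle in the quotient.

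The main obstacle is to hit the prescribed monodromy. Since $t$ is a global invariant of the cycle, to realize every residue $s$ coprime to $2n+1$ one needs flexibility in the construction. My plan is to augment the base cycle with local ``monodromy-changing'' rewirings --- small Hamiltonicity-preserving substitutions that replace two cycle edges by two others and thereby change $t$ by a predictable amount $\Delta$. As soon as one such rewiring with $\Delta$ coprime to $2n+1$ is exhibited, iterating it sweeps through all admissible values of $t$, yielding for each valid $s$ a Hamilton cycle of $H_n$ of monodromy $s$ and hence the flip sequence demanded by the theorem.
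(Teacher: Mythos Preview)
Your reformulation via the quotient graph and monodromy is exactly right and matches the paper's necklace-graph setup. The overall architecture---build a Hamilton cycle in the quotient by gluing tree-indexed pieces, then adjust the shift by local rewirings---is also the paper's strategy. But two of your steps are genuine gaps rather than routine bookkeeping.

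First, the claim that the tree-gluing framework of \cite{gregor-muetze-nummenpalo:18,MR4075363} ``should adapt, with routine but careful bookkeeping'' to the quotient is precisely where the paper's new work lies. Those earlier constructions glue paths in the middle-levels graph $M_n$, not in $N_n$; the paper stresses (Section~\ref{sec:comparison}) that redoing the construction on necklaces ``creates many additional technical complications''. Concretely, the basic flip sequences must be redefined so that they are rotation-invariant (Section~\ref{sec:paths}), the gluing gadgets become $6$-cycles on necklaces whose mutual interference (compatibility, interleaving, nesting; Proposition~\ref{prop:Cxy}) has to be analysed from scratch, and one must exhibit a spanning tree in a new auxiliary graph $\cH_n$ on plane trees that avoids interleaving (Lemma~\ref{lem:Tn}). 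None of this is inherited from the cited papers.

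Second, your plan to ``iterate'' a single rewiring with increment $\Delta$ coprime to $2n+1$ does not work as stated: a switch lives at one specific edge of the quotient cycle, so it can be applied once, not repeatedly, and different switches have different (and a priori uncontrolled) increments. The paper handles this in two moves you are missing. It constructs an explicit family of switches $\tau_{n,d}$ and $\tau_{n,d,z}$ realising \emph{every} shift increment $d$ (Lemma~\ref{lem:switch}), together with a number-theoretic argument (Lemmas~\ref{lem:coprime-change}--\ref{lem:coprime-small}) showing that at most two well-chosen switches always suffice to reach \emph{some} shift $s'$ coprime to $2n+1$. Crucially, it then invokes the \emph{scaling trick} (Section~\ref{sec:idea-shift}): once a single coprime shift $s'$ is achieved, multiplying the entire flip sequence entrywise by $s'^{-1}s$ (equivalently, permuting the bit positions by $i\mapsto s'^{-1}s\,i$) converts it into a solution with any prescribed coprime shift $s$. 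This replaces your proposed iteration entirely and is what makes hitting every admissible $s$ feasible.
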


In Section~\ref{sec:ideas} below we explain why the condition on~$s$ to be coprime to $2n+1$ is necessary and cannot be omitted from Theorem~\ref{thm:star}.
Our proof of Theorem~\ref{thm:star} is constructive, and translates into an algorithm that generates $(n+1,n+1)$-combinations by star transpositions efficiently.

\begin{theorem}
\label{thm:algo}
There is an algorithm that computes, for any $n\geq 1$ and $1\leq s\leq 2n$ that is coprime to $2n+1$, a star transposition ordering of all $(n+1,n+1)$-combinations as in Theorem~\ref{thm:star}, with running time~$\cO(n)$ for each generated combination, using $\cO(n)$ memory in total.
\end{theorem}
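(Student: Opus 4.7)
The plan is to turn the constructive proof of Theorem~\ref{thm:star} into an on-line procedure that never materializes the entire flip sequence $\alpha$, whose length $N=\binom{2(n+1)}{n+1}$ is exponential in~$n$, and never even materializes its initial block $\alpha_0$, whose length $2C_n$ is still exponential in~$n$. The state carried across steps consists only of the current bitstring $x\in\{0,1\}^{2(n+1)}$, the current block index $i\in\{0,1,\ldots,2n\}$, and a compact auxiliary object of size~$\cO(n)$ from which the next entry $p$ of~$\alpha_0$ can be read off together with the information needed to advance to its successor locally. Given $p$ and $i$, the actual flip position is $((p-1)+i\cdot s)\bmod(2n+1)+1\in\{1,\ldots,2n+1\}$, computed in $\cO(1)$, and the star transposition itself is a single bit swap on~$x$.

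The first step is to revisit the proof of Theorem~\ref{thm:star} and extract an explicit bijection between the $2C_n$ positions of $\alpha_0$ and a natural combinatorial family of Catalan size, together with a rule describing both the successor object and the corresponding value of~$p$; plausible candidates are Dyck paths of semilength~$n$ decorated with one extra bit, or an equivalent plane-tree family. Both the current object and the encoded value then fit into $\cO(n)$ bits. For the iteration I would lean on standard Gray-code-style traversals of Catalan structures, which produce the next object together with its local change either in worst-case $\cO(n)$ time or in amortized $\cO(1)$ time with an auxiliary stack of depth $\cO(n)$---either bound is enough for the $\cO(n)$-per-combination budget. Once the iterator emits the last entry of $\alpha_0$, the block index $i$ is incremented modulo $2n+1$ and the iterator is reset to its initial object, again in $\cO(n)$ time.

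The main obstacle is the gap between the existential content of Theorem~\ref{thm:star} and an explicit $\cO(n)$-time successor rule: the existence proof is free to describe $\alpha_0$ recursively or through case analyses that do not translate directly into a low-memory loop. To close the gap, I would reshape the construction so that each transition between successive objects is a single local modification of an $\cO(n)$-sized linked data structure, and verify by induction on the recursion underlying Theorem~\ref{thm:star} that this local rule enumerates the objects in the correct order. Correctness across block boundaries comes for free from Theorem~\ref{thm:star}: after the $(2n{+}1)$-fold iteration of $\alpha_0$ the bitstring has cycled back to its starting value, so the outer loop simply halts after $N$ steps. Putting everything together, each iteration of the algorithm executes a constant number of arithmetic operations modulo $2n+1$, one iterator advance of cost $\cO(n)$, and one bit swap, matching the claimed $\cO(n)$ time and $\cO(n)$ memory bounds.
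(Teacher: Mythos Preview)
Your proposal identifies the right high-level architecture---carry an $\cO(n)$-size state and compute each flip locally---but it has a genuine gap: you never address how the algorithm knows the shift of the flip sequence it is producing. You write the actual flip position as $((p-1)+i\cdot s)\bmod(2n+1)+1$ with $s$ the input shift, implicitly assuming that the block~$\alpha_0$ you iterate over already has shift~$s$. But the constructive proof of Theorem~\ref{thm:star} does not hand you~$\alpha_0$ for an arbitrary~$s$; it builds one flip sequence whose shift is $\sum_{T\in T_n}\gamma_T\,\lambda(T)$ for signs~$\gamma_T$ determined by which gluing cycles are nested, and then scales. Computing this sum naively requires enumerating exponentially many plane trees, so without further work the algorithm cannot even decide, at initialization, which switches to apply or what scaling factor to use. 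This is exactly the obstacle the paper singles out in Section~\ref{sec:idea-algo}, and its entire Section~\ref{sec:algo} is devoted to removing it: the spanning tree~$\cT_n$ is redefined (via a substantially more intricate rule set) so that $G(\cT_n)$ is provably nesting-free, forcing all~$\gamma_T=+1$ and hence the shift to equal~$C_n\bmod(2n+1)$, which is computed once in~$\cO(n^2)$ time. Your plan contains no analogue of this step.

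A second, lesser gap is that the successor rule you sketch---``extract an explicit bijection with a Catalan family and use a standard Gray-code traversal''---does not match the structure of the construction. The periodic path is not a Gray code on Dyck words; it alternates $f$-steps (tree rotations) with occasional detours along gluing cycles determined by the spanning tree~$\cT_n$, and is further perturbed by one or two switches. The paper's per-step work is accordingly not a Catalan successor but: maintain the current plane tree~$T=[t(x)]$ with its centroid(s), run Booth's algorithm to canonicalize the cyclic ordering of $c$-subtrees, apply rules~(T1)--(T3) to decide whether the next step is an $f$-step or a pull/push step, and update~$x$, $\ell(x)$, and~$T$. Each of these pieces is~$\cO(n)$, but none of them is a ``standard Gray-code-style traversal of Catalan structures''. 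Your phrase ``reshape the construction so that each transition is a single local modification'' is the whole problem, not its solution.
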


The initial combination can be chosen arbitrarily in our algorithm, and the initialization time is~$\mathcal{O}(n^2)$.
We implemented this algorithm in C++ and made it available for download and experimentation on the Combinatorial Object Server website~\cite{cos_middle}.
It is open whether our algorithm can be improved to generate each combination in time~$\cO(1)$ instead of~$\cO(n)$.
Also, our algorithm is rather complex, and it remains a challenge to come up with a simple and efficient algorithm, either for the original middle levels conjecture or for Knuth's stronger variant.
We offer a reward of $2^5$~EUR for an algorithm that can be implemented with at most $4000$~ASCII characters of C++ code.

\subsection{Related work}
\label{sec:related}

Let us briefly discuss several results and open questions that are closely related to our work.

\begin{figure}
\makebox[0cm]{ 
\begin{tabular}{cc}
\includegraphics{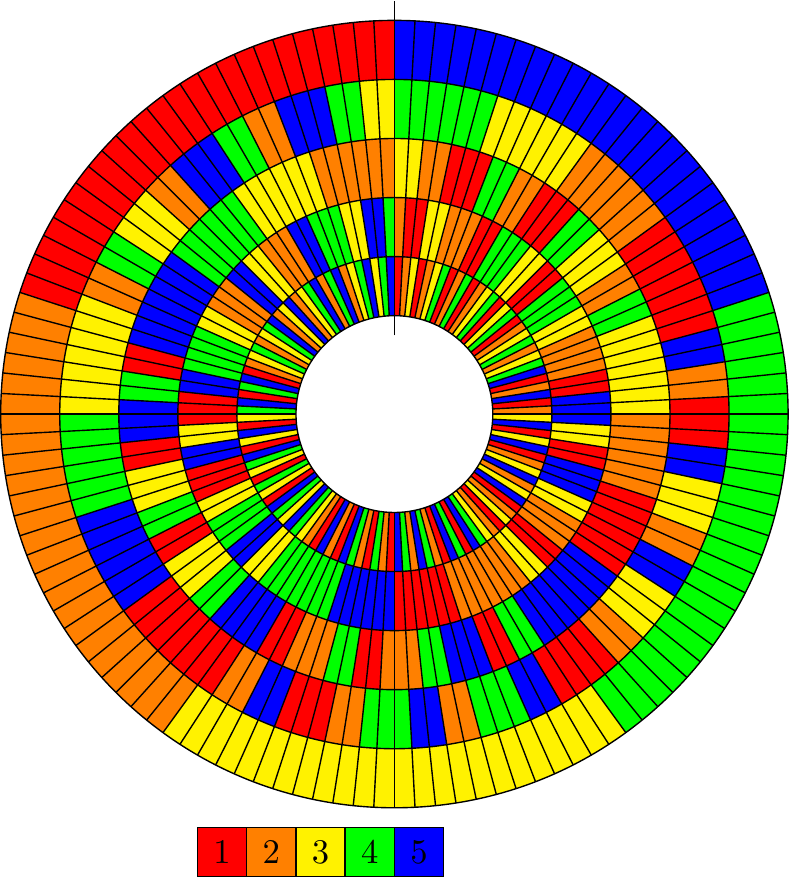} &
\includegraphics{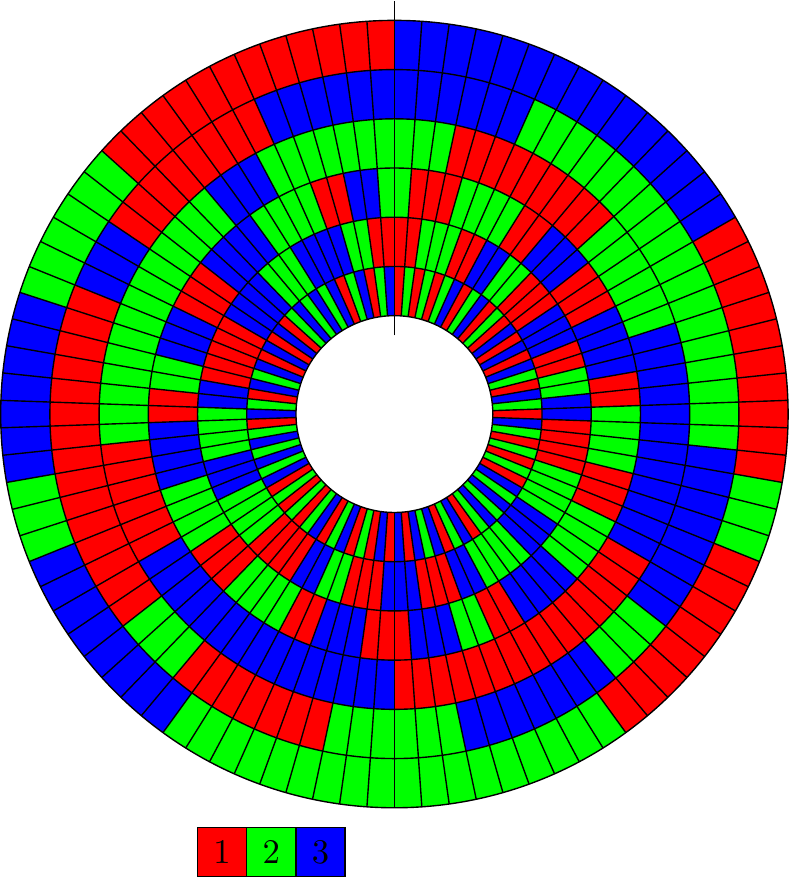} \\
(a) $(a_1,\ldots,a_5)=(1,1,1,1,1)$ & (b) $(a_1,a_2,a_3)=(2,2,2)$
\end{tabular}
}
\caption{Star transposition Gray codes for (a) permutations of length $n=5$ and (b) multiset permutations with frequencies $(a_1,a_2,a_3)=(2,2,2)$.
Permutations are arranged in clockwise order, starting at 12 o'clock, with the first entry on the inner track, and the last entry on the outer track.
The color on the inner track alternates in each step.}
\label{fig:star}
\end{figure}

\subsubsection{Star transpositions for permutations}

In the literature, star transposition orderings of objects other than combinations have been studied intensively.
A classical result, discovered independently by Kompel'maher and Liskovec \cite{MR0498276} and Slater~\cite{MR504868}, is that all \emph{permutations} of~$[n]$ can be generated  (cyclically) by star transpositions, i.e., in every step, the first entry of the permutation is swapped with a later entry.
An efficient algorithm for this task, was found by Ehrlich, and is described as Algorithm~E in Knuth's book~\cite[Section~7.2.1.2]{MR3444818} (see also~\cite{DBLP:journals/endm/ShenW13}).
It has a slight defect of not always guaranteeing a cyclic ordering, i.e., the first and last permutation do not always differ in a star transposition.
The flip sequence produced by Ehrlich's algorithm for $n=4$ with starting permutation~$1234$ is $\alpha=121213212123121213212123$ (indices are again 0-based), and it gives a cyclic ordering.
The ordering of permutations resulting from this algorithm for $n=5$ is shown in Figure~\ref{fig:star}~(a), and it is not cyclic.

In fact, the first two of the aforementioned papers prove that permutations can be generated using \emph{any} set of transpositions that forms a connected graph as a basis, such as star transpositions or adjacent transpositions.
Tchuente~\cite{MR683982} proved more generally that the graph of permutations under such transpositions is Hamilton-laceable for $n\geq 4$, i.e., it has a Hamilton path between any two permutations with opposite signs.
These results are special cases of a more general open problem that asks whether the Cayley graph of the symmetric group on any set of generators has a Hamilton cycle, which in turn is a special case of an even more general conjecture attributed to Lov{\'a}sz about Hamilton cycles in arbitrary connected Cayley graphs~\cite{MR0263646}; see \cite{MR1201997,MR2548567,MR2548568} for more references in this direction.

\subsubsection{Multiset permutations}
\label{sec:related-multiset}

Combinations and permutations are both special cases of \emph{multiset permutations}.
A multiset permutation is a string over the alphabet $\{1,\ldots,n\}$ with a given frequency distribution $(a_1,\ldots,a_n)$, i.e., the symbol~$i$ appears exactly $a_i$ times in the string for all $i=1,\ldots,n$.
For instance, $4412113$ is a multiset permutation for $n=4$ with frequencies $(a_1,a_2,a_3,a_4)=(3,1,1,2)$.
Clearly, for $n=2$ multiset permutations contain only two symbols, so they encode combinations.
On the other hand, for $(a_1,\ldots,a_n)=(1,\ldots,1)$ every symbol appears exactly once, so such multiset permutations are simply permutations of~$[n]$.
Shen and Williams~\cite{shen_williams_2021} raised a beautiful and brave conjecture which asserts that for any integers~$n\geq 2$ and $k\geq 1$, all multiset permutations over the alphabet $\{1,\ldots,n\}$ with frequencies $(a_1,\ldots,a_n)=(k,\ldots,k)$ can be generated by star transpositions.
The only confirmed general cases for this conjecture are the case $n=2$ (the middle levels conjecture) and the case $k=1$ (by the results on permutations mentioned before).
In addition, Shen and Williams gave a solution for $(n,k)=(3,2)$ in their paper, which is shown in Figure~\ref{fig:star}~(b).
We also verified the next two small cases $(n,k)=(3,3)$ and $(n,k)=(4,2)$ by computer.
Moreover, the techniques developed in this paper allowed us to solve Shen and Williams' conjecture for the cases $k\in\{2,3,4\}$ and $n\geq 2$~\cite{gregor-merino-muetze:21}.

\subsubsection{Other algorithms for combination generation}

We also know many efficient algorithms for generating combinations that do not use star transpositions.
Tang and Liu \cite{MR0349274} first showed that all $(k,\ell)$-combinations can be generated by transpositions of a 0-bit with a 1-bit, where neither of the swapped bits is required to be at the boundary.
Their construction arises from restricting the classical binary reflected Gray code to bitstrings with~$k$ 1s, and was turned into a constant-time algorithm by Bitner, Ehrlich, and Reingold~\cite{MR0424386}.
Eades and McKay~\cite{MR782221} showed that $(k,\ell)$-combinations can be generated by transpositions of the form $00\cdots 01\leftrightarrow 10\cdots 00$, i.e., the bits between the swapped~0 and~1 are all~0s.
We can think of this as an algorithm that plays all possible combinations of~$k$ keys out of~$n=k+\ell$ available keys on a piano, without ever crossing any fingers.
Jenkyns and McCarthy~\cite{MR1352777} showed that we can restrict the allowed swaps further and only allow transpositions of the form $01\leftrightarrow 10$ or $001\leftrightarrow 100$; see also~\cite{MR995888}.
Eades, Hickey and Read~\cite{MR821383} and independently Buck and Wiedemann~\cite{MR737262} proved that all $(k,\ell)$-combinations can be generated by using only adjacent transpositions $01\leftrightarrow 10$ if and only if $k\leq 1$ or $\ell\leq 1$ or $k\cdot\ell$ is odd.
An efficient algorithm for this problem was given by Ruskey~\cite{MR936104}.

Another elegant and efficient method for generating combinations based on prefix rotations was described by Ruskey and Williams~\cite{MR2548545}.
An interesting open question in this context is whether all $(k,\ell)$-combinations can be generated by prefix reversals, i.e., in each step, a prefix of the bitstring representation is reversed to obtain the next combination.
Such orderings can be constructed easily for the cases $k\in\{1,2\}$ or $\ell\in\{1,2\}$, but no general construction is known.

\subsection{Proof ideas}
\label{sec:ideas}

In this section we outline the main ideas used in our proof of Theorem~\ref{thm:star}, and in its algorithmization stated in Theorem~\ref{thm:algo}.
We also highlight the new contributions of our work compared to previous papers.

\subsubsection{Flipping through necklaces}
\label{sec:idea-paths}

We start noting that the first bit of a star transposition ordering of combinations alternates in each step (see~Figure~\ref{fig:c44}), so we may simply omit it, and obtain an ordering of all bitstrings of length~$2n+1$ with either exactly~$n$ or~$n+1$ many~1s, such that in every step, a single bit is flipped.
Observe that from a flip sequence~$\alpha_0$ that satisfies the conditions of Theorem~\ref{thm:star} we can uniquely reconstruct the first bitstring of each block, by considering for each $i\in\{1,\ldots,2n+1\}$ the parity of the position of first occurrence of the number~$i$ in~$\alpha$ starting from this block, where indices of~$\alpha$ start with~0.
For example, for the flip sequence~$\alpha$ defined by $\alpha_0:=6253462135$ and $s=1$ (see the left column in Figure~\ref{fig:c44}), the first occurrence of the numbers $i=1,\ldots,7$ in~$\alpha$ is at positions $7,1,3,4,2,0,10$ and the parity of those numbers is the starting bitstring~$1110000$.
In this example, 7 is the first entry of the second block~$\alpha_0+1$, which is at position~10 overall.
As any two consecutive blocks of the flip sequence differ by addition of~$s$, the first bitstrings of the blocks differ by cyclic rotation by~$s$ positions.
From this we obtain that the flip sequence~$\alpha_0$ that operates on these strings of length~$2n+1$ must visit every equivalence class of bitstrings under rotation exactly once, and it must return to a bitstring from the same equivalence class as the starting bitstring.
It also follows that the compression factor $2n+1$ in Knuth's problem is best possible.
Formally, a \emph{necklace~$\neck{x}$} for a bitstring~$x$ is the set of all strings that are obtained as cyclic rotations of~$x$.
For example, the necklace of~$x=1110000$ is $\neck{x}=\{1110000,1100001,1000011,0000111,0001110,0011100,0111000\}$, and there are 10 necklaces for $n=3$, namely $\neck{1110000},\neck{1101000},\neck{1100100},\neck{1100010},\neck{1010100}$ and their complements.
In Figure~\ref{fig:c44}, each of the shown flip sequences~$\alpha_0$ visits exactly one representative from each of the 10~necklaces, and it starts and ends with a bitstring from the same necklace.
Specifically, for each of the solutions for $s=1,\ldots,6$ shown in the figure, the corresponding flip sequence starts at some bitstring~$x$, and after 10 flips arrives at a right-shift of~$x$ by~$s$ positions (first bit omitted).
For example, the flip sequence for $s=1$ starts at~1110000 and arrives at~0111000.
We refer to~$s$ as the \emph{shift} of the flip sequence~$\alpha_0$.
The crucial observation is that every string of length~$2n+1$ with either exactly~$n$ or~$n+1$ many~1s has exactly $2n+1$ distinct cyclic rotations, i.e., every necklace has the same size~$2n+1$.
Consequently, we may apply the shifted flip sequences~$\alpha_i$, obtained from $\alpha_0$ by element-wise addition of~$i\cdot s$ modulo~$2n+1$, one after the other for $i=1,\ldots,2n$, and this will produce the desired star transposition ordering of all~$(n+1,n+1)$-combinations.
Clearly, for this to work the shift~$s$ and~$2n+1$ must be coprime, otherwise we will return to the starting bitstring prematurely before exhausting all bitstrings from the necklaces.
In particular, if $s=0$ we return to the starting bitstring after applying only~$\alpha_0$.
For instance, applying the flip sequence $\alpha_0=6241247617$ to the starting string~$1110000$ visits every necklace exactly once, but returns to the exact same bitstring after 10 flips (every bit is flipped an even number of times by~$\alpha_0$), so this flip sequence has shift~$s=0$.
This explains the condition stated in Theorem~\ref{thm:star} that $s$ and $2n+1$ must be coprime, which is necessary and cannot be omitted.

\subsubsection{Gluing flip sequences together}
\label{sec:idea-gluing}

To construct a flip sequence~$\alpha_0$ that visits every necklace exactly once and returns to the starting necklace, we first construct many disjoint shorter flip sequences that together visit every necklace exactly once.
These basic flip sequences are obtained from a simple bitflip rule based on Dyck words that is invariant under cyclic rotations.
In a second step, these basic flip sequences are glued together to a single flip sequence by local modifications.

\begin{wrapfigure}{r}{0.55\textwidth}
\includegraphics[page=1]{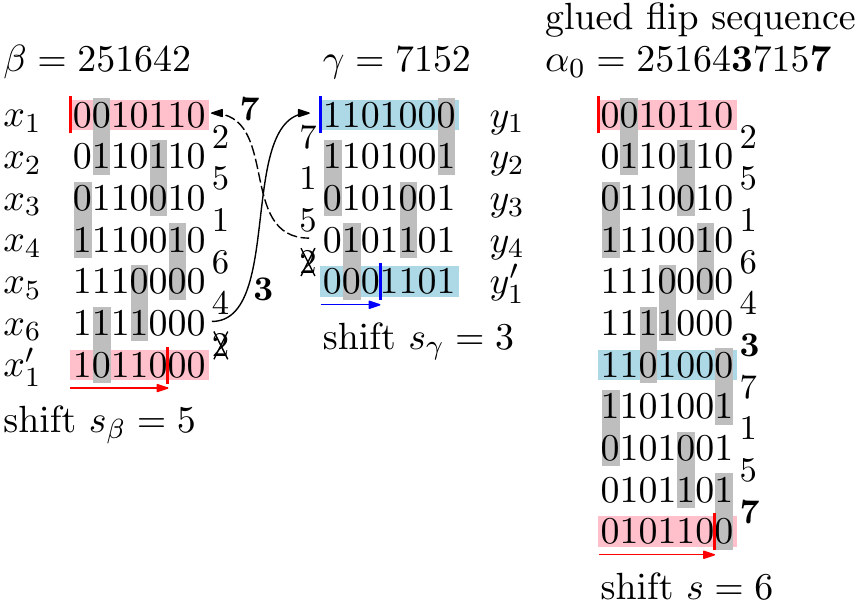}
\caption{Gluing of flip sequences.}
\label{fig:idea-gluing}
\end{wrapfigure}
Figure~\ref{fig:idea-gluing} illustrates this approach for $n=3$.
We may start at the bitstring~$x_1=0010110$ and apply the flip sequence~$\beta=251642$ to generate a sequence of bitstrings~$x_1,x_2,\ldots,x_6,x_1'$, and the final bitstring~$x_1'=1011000$ belongs to the same necklace as~$x_1$, and it differs from~$x_1$ by a right-rotation of~$s_\beta=5$.
Similarly, from the bitstring~$y_1=1101000$ we may apply the flip sequence~$\gamma=7152$ to generate a sequence of bitstrings~$y_1,y_2,y_3,y_4,y_1'$, and the final bitstring $y_1'=0001101$ belongs to the same necklace as~$y_1$, and it differs from~$y_1$ by a right-rotation of~$s_\gamma=3$.
The sets of necklaces $\neck{x_1},\ldots,\neck{x_6}$ and $\neck{y_1},\ldots,\neck{y_4}$ visited by the two sequences are disjoint, and every necklace is contained in exactly one sequence.
As $x_6$ differs from~$y_1$ by a single flip of the 3rd bit, and $y_4$ differs from a cyclic rotation of~$x_1$ by a single flip of the 7th bit, we may replace the last entry of~$\beta$ with~3 and the last entry of~$\gamma$ by~7, and concatenate the resulting sequences, yielding the flip sequence $\alpha_0=25164{\bf 3}715{\bf 7}$, which visits every necklace exactly once.
Moreover, the shift of the resulting flip sequence~$\alpha_0$ turns out to be $s=6$ (which is coprime to $2n+1=7$).
In this example, the gluing of the two flip sequences is achieved by taking their symmetric difference with a 4-cycle of necklaces $(\neck{x_1},\neck{x_6},\neck{y_1},\neck{y_4})$, removing one flip from each of the two original sequences, and adding two flips to transition back and forth between them.
In our proof later, for technical reasons the gluing of flip sequences uses slightly more complicated structures, namely 6-cycles of necklaces, albeit with the same effect of gluing two smaller flip sequences together in each step.
One of the major technical hurdles is to ensure that several of these gluing steps do not interfere with each other.

\begin{wrapfigure}{r}{0.48\textwidth}
\includegraphics[page=2]{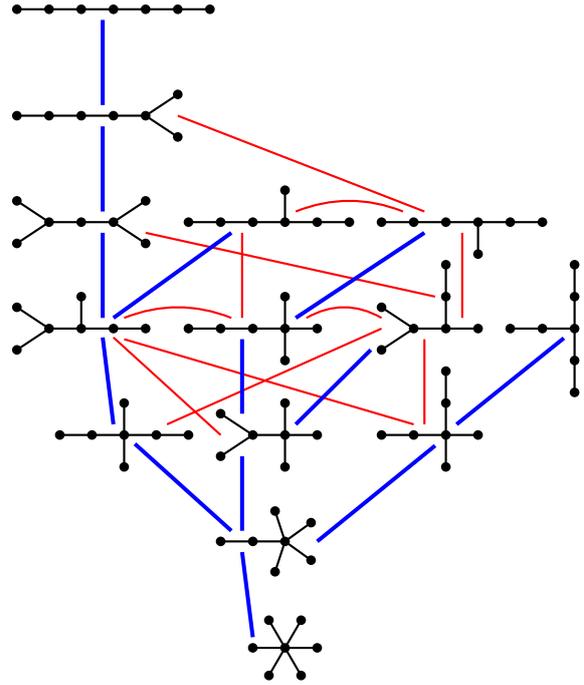}
\caption{Auxiliary graph on plane trees with $n=6$ edges.
Edges correspond to local modification operations on the plane trees.
The bold edges show a spanning tree in the graph.}
\label{fig:idea-aux}
\end{wrapfigure}
The benefit of the gluing approach is that Knuth's generation problem translates into the problem of finding a spanning tree in a suitably defined auxiliary graph:
Specifically, the nodes of this auxiliary graph are the basic flip sequences we start with, and the edges correspond to the gluing 6-cycles that join two of them together.
A spanning tree in the auxiliary graph corresponds to a collection of gluing 6-cycles that glue together all basic flip sequences to a single flip sequence~$\alpha_0$ with the desired properties.
We show that our basic flip sequences can be interpreted combinatorially as the plane trees with $n$ edges via a bijective correspondence (in particular, the number of basic flip sequences is given by the number of these trees).
As a consequence, the aforementioned auxiliary graph has all plane trees with $n$ edges as its nodes; see Figure~\ref{fig:idea-aux}.
Moreover, a gluing 6-cycle can be interpreted as a local modification of the two plane trees involved.
Specifically, a leaf of one plane tree is removed and reattached to a neighbor of the original attachment vertex.
A spanning tree in the auxiliary graph can be obtained by choosing a minimal set of gluing 6-cycles such that the corresponding local modifications allow transforming any two plane trees into each other (see the bold edges in Figure~\ref{fig:idea-aux}).

\subsubsection{Controlling the shift}
\label{sec:idea-shift}

The next step is to control the shift of the resulting flip sequence~$\alpha_0$.
Without controlling the overall shift, we may end up with a shift~$s=0$, or more generally, with a shift~$s$ that is not coprime to~$2n+1$, which is useless as explained above.

The key observation here is that it is enough to construct a flip sequence~$\alpha_0$ for \emph{one particular} shift~$s$ that is coprime to~$2n+1$.
With this solution in hand, a simple transformation gives us \emph{every} shift~$\hs$ that is coprime to~$2n+1$.
Indeed, this transformation is given by multiplying all entries of~$\alpha_0$ by $s^{-1}\hs$ (modulo~$2n+1$), where $s^{-1}$ is the multiplicative inverse of~$s$.
In other words, the Gray code with shift~$\hs$ is obtained from the one with shift~$s$ by permuting columns according to the rule $i\mapsto s^{-1}\hs i$ for all $i=0,\ldots,2n+1$.
In fact, among the first six solutions for $s=1,\ldots,6$ shown in Figure~\ref{fig:c44}, any two are related by such a permutation of columns.
For example, from the flip sequence~$\alpha_0=6253462135$ with shift~$s=1$, we obtain $2\alpha_0=12\;4\;10\;6\;8\;12\;4\;2\;6\;10=5436154263$, which is the solution shown with shift~$s=2$.
This simple but powerful \emph{scaling trick} links our general version of the problem, which allows for an arbitrary shift~$s$ coprime to~$2n+1$, to Knuth's problem, which specifically asks for a shift of~$s=1$.

Still, we are left with the problem of constructing a suitable flip sequence for \emph{one particular} shift that is coprime to~$2n+1$, which we solve by a technique that we call \emph{switching}.
To illustrate the idea behind it, consider the solutions for $s=1$ and the rightmost solution for $s=6$ in Figure~\ref{fig:c44}.
Observe that both of these flip sequences visit the same necklaces in the same order, but they only differ in the chosen necklace representatives, yielding different shift values.
\begin{wrapfigure}{r}{0.4\textwidth}
\includegraphics[page=3]{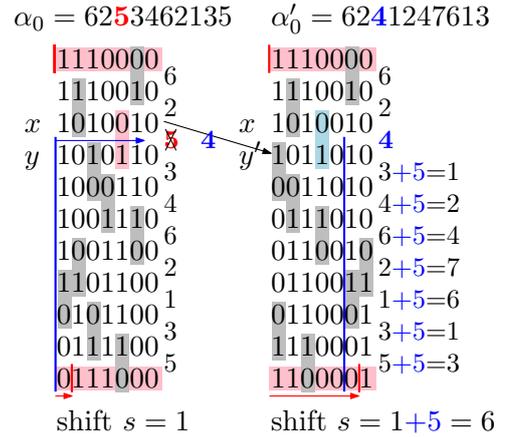}
\caption{Controlling the shift value of flip sequences by switching.}
\label{fig:idea-switch}
\end{wrapfigure}
Specifically, after the first two flips, both of these flip sequences visit the bitstring~$x=1010010$; see Figure~\ref{fig:idea-switch}.
In the third step, one sequence flips the 5th bit of~$x$, yielding the string~$y=1010110$, while the other sequence flips the 4th bit of~$x$, yielding the string~$y'=1011010$, which only differ by cyclic right-rotation by 5~steps.
After this flip, the entries of both flip sequences differ only by the constant~5, and consequently, their shift values differ only by~5 ($s=1$ and $s=6$).
We refer to a bitstring~$x$ that allows flipping two distinct bits to reach two bitstrings~$y$ and~$y'$ in the same necklace $\neck{y}=\neck{y'}$ as a \emph{switch}.
In our approach, we systematically construct switches that allow modifying the shift of flip sequences in a controlled way, so as to achieve some shift value that is coprime to~$2n+1$, while preserving the order of the visited necklaces.

\subsubsection{Efficient computation}
\label{sec:idea-algo}

The biggest obstacle in translating our constructive proof into an efficient algorithm is to quickly compute the resulting shift~$s$ of the flip sequence~$\alpha_0$ that results from the gluing process.
Ideally, gluing two flip sequences with shifts~$s_\beta$ and~$s_\gamma$ should give a flip sequence with shift~$s_\beta+s_\gamma$.

If this additivity would hold, it would allow us to compute the overall shift simply as the sum of shifts of the basic flip sequences, and this sum can be evaluated explicitly to be $s=C_n$, the $n$th Catalan number.
For the algorithm, it is crucial to know the initial shift value~$s$, because only then do we know by how much~$s$ needs to be modified through switching and scaling to achieve the shift value that is specified in the input of the algorithm.
For instance, for $n=16$ we have $s=C_n=35{,}357{,}670=18 \pmod{2n+1}$, which happens not to be coprime to $2n+1=33$, so in this case we need to first apply switching to achieve a coprime shift such as $s=17$, and then apply scaling to achieve the shift value that is specified as input of the algorithm.

In the example from Section~\ref{sec:idea-gluing}, we had $s_\beta=5$ and $s_\gamma=3$, and an overall shift of $s=6$ after the gluing, which is different from $s_\beta+s_\gamma=5+3=8=1$ (modulo~7), so the desired additivity of shifts under gluing does not hold in this example.
In fact, guaranteeing that the shifts behave additively under gluing requires substantial effort, and is achieved by constructing a particularly nice spanning tree in the aforementioned auxiliary graph.

\subsubsection{Comparison to previous work}
\label{sec:comparison}

The general idea of gluing, and the resulting reduction to a spanning tree problem, is very natural and variations of it have been used successfully in several papers before (see e.g.~\cite{MR1308693,MR2925746,MR3599935,DBLP:conf/soda/SawadaW18,MR3826304,DBLP:conf/icalp/GregorMM20}).
As mentioned before, a flip sequence~$\alpha_0$ satisfying the requirements of Knuth's conjecture encodes the entire flip sequence~$\alpha$ by a factor of~$2n+1$ more compactly.
This requires us to perform all the aforementioned steps, i.e., the construction of basic flip sequences and gluing them together, on \emph{necklaces} rather than on \emph{bitstrings}, as was previously done in~\cite{MR3483129,gregor-muetze-nummenpalo:18}, which creates many additional technical complications.
The key innovation of our paper is to develop these necklace-based constructions, and in particular, to control the shift of the resulting flip sequence~$\alpha_0$, using the techniques presented in Section~\ref{sec:idea-shift}.
The flexibility that these methods have will certainly yield other interesting applications in the future.
As evidence for that, recall from Section~\ref{sec:related-multiset} that these techniques allow us to construct solutions for Shen and Williams' conjecture on multiset permutations with frequencies $(a_1,\ldots,a_n)=(k,\ldots,k)$ for the cases $k\in\{2,3,4\}$ and $n\geq 2$~\cite{gregor-merino-muetze:21}.
We believe that with more work, they can help to settle this problem in full generality.

\subsection{Outline of this paper}

In Section~\ref{sec:prel} we introduce some terminology and notation that is used throughout the paper.
In Section~\ref{sec:paths} we explain the construction of the basic flip sequences that together traverse all necklaces.
In Section~\ref{sec:gluing} we discuss the gluing technique that we use to join the basic flip sequences together to a single flip sequence.
These two ingredients are combined in Section~\ref{sec:tree}, where we reduce Knuth's problem to a spanning tree problem in a suitably defined auxiliary graph.
In Section~\ref{sec:switch} we discuss the switching technique, and based on this we present the proof of Theorem~\ref{thm:star}.
In Section~\ref{sec:algo} we redefine the spanning tree in the auxiliary graph, so that shift values behave additively under gluing, which is essential for our algorithms, and we present the proof of Theorem~\ref{thm:algo}.

\section{Preliminaries}
\label{sec:prel}

In this section we introduce some definitions and easy observations that we will use repeatedly in the subsequent sections.

\subsection{Binary strings and necklaces}

For $n\geq 1$, we let $A_n$ and~$B_n$ denote all bitstrings of length $2n+1$ with exactly $n$ or $n+1$ many 1s, respectively.
The \emph{middle levels graph~$M_n$} has $A_n\cup B_n$ as its vertex set, and an edge between any two bitstrings that differ in a single bit.
As mentioned in Section~\ref{sec:idea-paths} before, in a star transposition ordering of all $(n+1,n+1)$-combinations, the first bit alternates between 0 and~1 in each step; see Figure~\ref{fig:c44}.
Consequently, omitting the first bit, we see a Hamilton cycle in the graph~$M_n$ on the remaining $2n+1$ bits.
We index these bit positions by $1,\ldots,2n+1$, and we consider all indices modulo $2n+1$ throughout this paper, with $1,\ldots,2n+1$ as residue class respresentatives (rather than $0,\ldots,2n$).

The empty bitstring is denoted by~$\varepsilon$.
For any bitstring~$x$ and any integer $i\geq 0$, we let $x^i$ denote the bitstring obtained by concatenating $i$ copies of~$x$.
Also, we let $\sigma^i(x)$ denote the bitstring obtained from~$x$ by cyclic left-rotation by~$i$ positions.
As mentioned before, the \emph{necklace of $x$}, denoted~$\neck{x}$, is defined as the set of all bitstrings obtained from~$x$ by cyclic rotations, i.e., we have $\neck{x}=\{\sigma^i(x)\mid i\geq 0\}$.
For example, for $x=11000\in A_2$ we have $\neck{x}=\{11000,10001,00011,00110,01100\}$.
The \emph{necklace graph~$N_n$} has as vertex set all~$\neck{x}$, $x\in A_n\cup B_n$, and an edge between any two necklaces~$\neck{x}$ and~$\neck{y}$ for which~$x$ and~$y$ differ in a single bit; see Figure~\ref{fig:n3}.
Observe that~$N_n$ arises as the quotient of~$M_n$ under the equivalence relation of rotating bitstrings cyclically.
Note that for a given necklace~$\neck{x}$, there may be two distinct bits in the representative~$x$ that reach the same necklace~$\neck{y}$, a fact that we will exploit heavily in Section~\ref{sec:switch}.
Nonetheless, we still consider $N_n$ as a simple graph, and so not all vertices of~$N_n$ have the same degree.
As mentioned before, for any $x\in A_n\cup B_n$, the necklace $\neck{x}$ has size~$2n+1$, i.e., the graph $N_n$ has by a factor $2n+1$ fewer vertices than the graph~$M_n$.
To define the flip sequence~$\alpha_0$ in Theorem~\ref{thm:star}, we will construct a Hamilton cycle in~$N_n$.
This is achieved using paths in the middle levels graph~$M_n$ that have the following property:
A path $P=(x_1,\ldots,x_k)$ in~$M_n$ is called \emph{periodic}, if one can flip a single bit in~$x_k$ to obtain a vertex~$x_{k+1}$ that satisfies $\neck{x_{k+1}}=\neck{x_1}$.

\begin{figure}
\includegraphics{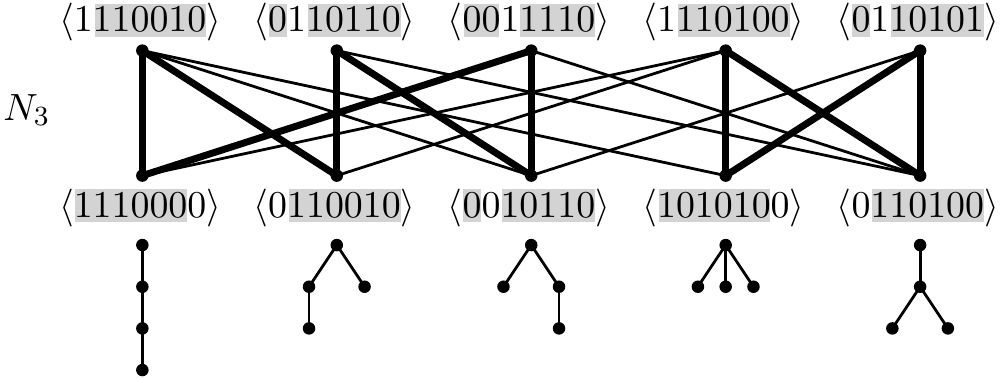}
\caption{The necklace graph $N_3$, with the cycle factor $\cF_3$ highlighted.
The Dyck words in the necklace representatives are highlighted by gray boxes, and the corresponding rooted trees~$t(x)$ for all $\neck{x}$, $x\in A_3$, are shown at the bottom.}
\label{fig:n3}
\end{figure}

\subsection{Operations on sequences}

For any sequence $x=(x_1,\ldots,x_k)$, we let $|x|:=k$ denote the length of the sequence.
For any sequence of integers $x=(x_1,\ldots,x_k)$ and any integer $a$, we define $x+a:=(x_1+a,\ldots,x_k+a)$.
For any sequence of bitstrings $x=(x_1,\ldots,x_k)$, we define $\neck{x}:=(\neck{x_1},\ldots,\neck{x_k})$ and $\sigma^i(x):=(\sigma^i(x_1),\ldots,\sigma^i(x_k))$.

\subsection{Dyck words, rooted trees, and plane trees}

The \emph{excess} of a bitstring~$x$ is the number of~1s minus the number of~0s in~$x$.
If $x$ has excess~0 (i.e., it has the same number of 1s and 0s) and every prefix of~$x$ has non-negative excess, then we call $x$ a \emph{Dyck word}.
For $n\geq 0$, we write $D_n$ for the set of all Dyck words of length~$2n$.
Moreover, we define $D:=\bigcup_{n\geq 0}D_n$.

An (ordered) \emph{rooted tree} is a rooted tree with a specified left-to-right ordering for the children of each vertex.
The trees we consider are unlabeled, i.e., their vertices are indistinguishable.
Every Dyck word~$x\in D_n$ can be interpreted as a rooted tree with $n$ edges as follows; see Figure~\ref{fig:rot}:
If $x=\varepsilon$, then this corresponds to the tree that has an isolated vertex as root.
If $x\neq \varepsilon$, then it can be written uniquely as $x=1\,u\,0\,v$ with $u,v\in D$.
\begin{wrapfigure}{r}{0.42\textwidth}
\includegraphics{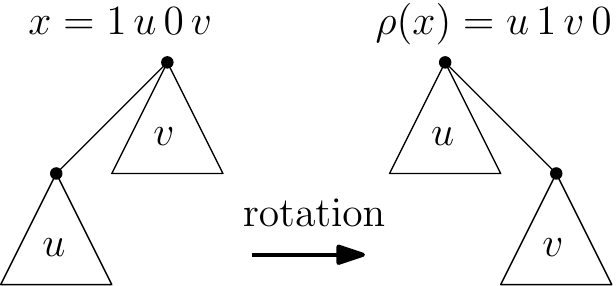}
\captionsetup{margin=3mm}
\caption{Interpretation of Dyck words as rooted trees and definition of tree rotation.}
\label{fig:rot}
\end{wrapfigure}
We then consider the trees~$L$ and~$R$ corresponding to~$u$ and~$v$, respectively, and the tree corresponding to~$x$ has $L$ rooted at the leftmost child of the root, and the edges from the root to all other children except the leftmost one, together with their subtrees, form the tree~$R$.
This mapping is clearly a bijection between~$D_n$ and rooted trees with $n$ edges.

Given a rooted tree~$x\neq \varepsilon$, let $\rho(x)$ denote the tree obtained by rotating the tree to the right, which corresponds to designating the leftmost child of the root of~$x$ as the new root in~$\rho(x)$.
In terms of bitstrings, if $x=1\,u\,0\,v$ with $u,v\in D$, then $\rho(x)=u\,1\,v\,0$; see Figure~\ref{fig:rot}.

A \emph{plane tree} is a tree with a specified cyclic ordering of the neighbors of each vertex.
We think of it as a tree embedded in the plane, where the cyclic ordering is the ordering of the neighbors of each vertex in counterclockwise (ccw) direction around the vertex.
For $n\geq 1$, we let $T_n$ denote the set of all plane trees with $n$ edges.
For any rooted tree~$x$, we let $[x]$ denote the set of all rooted trees obtained from~$x$ by rotation, i.e., we define $[x]:=\{\rho^i(x)\mid i\geq 0\}$, and this can be interpreted as the plane tree underlying~$x$, obtained by `forgetting' the root.
We also define $\lambda(x):=|[x]|$, and for the plane tree $T=[x]$ we define $\lambda(T):=\lambda(x)$.
Note that $\lambda(x)=\min\{i\geq 1\mid \rho^i(x)=x\}$, which implies that the choice of representative from~$[x]$ in the definition of~$\lambda(T)$ is irrelevant, i.e., $\lambda(T)$ is well defined.

For any plane tree~$T$ and any of its edges~$(a,b)$, we let $T^{(a,b)}$ denote the rooted tree obtained from~$T$ by designating~$a$ as root such that~$b$ is its leftmost child.
Moreover, we let $T^{(a,b)-}$ denote the rooted tree obtained from~$T^{(a,b)}$ by removing all children and their subtrees of the root except the leftmost one, and we let $T^{(a,b)--}$ denote the tree obtained as the subtree of~$T^{(a,b)-}$ that is rooted at the vertex~$b$.
Given a vertex~$a$ of~$T$, consider each neighbor~$b_i$, $i=1,\ldots,k$, of~$a$ and define the rooted tree $t_i:=T^{(a,b_i)-}$.
We refer to the rooted trees $t_1,\ldots,t_k$ as the \emph{$a$-subtrees} of~$T$.
Note that we have $T=[(t_1,\ldots,t_k)]$, where $(t_1,\ldots,t_k)$ is the rooted tree obtained from gluing together the trees $t_1,\ldots,t_k$ from left to right in this order at their roots.
In terms of bitstrings, $(t_1,\ldots,t_k)$ is the bitstring obtained from concatenating the bitstring representations~$t_1,\ldots,t_k$ of those trees.

A \emph{leaf} of a rooted or plane tree is a vertex with degree~1.
In particular, the root of a rooted tree is a leaf if and only if it has exactly one child.
We say that a leaf of a tree is \emph{thin} if its unique neighbor in the tree has degree at most~2, otherwise we call the leaf \emph{thick}.
For any rooted or plane tree~$T$, we write~$v(T)$ and~$e(T)$ for the number of vertices or edges of~$T$, respectively.

\subsection{Centroids and potential}

Given a (rooted or plane) tree~$T$, the \emph{potential of a vertex~$a$}, denoted $\varphi(a)$, is the sum of distances from~$a$ to all other vertices in~$T$.
The \emph{potential of the tree~$T$}, denoted $\varphi(T)$, is the minimum of~$\varphi(a)$ over all vertices~$a$ of~$T$.
A \emph{centroid} of~$T$ is a vertex~$a$ with $\varphi(a)=\varphi(T)$.

The notion of centroids was introduced by Jordan~\cite{MR1579443}, and there are several equivalent characterizations for it.
For example, a centroid of an $n$-vertex tree is a vertex whose removal splits the tree into subtrees with at most $n/2$ vertices each.
Our first lemma captures further important properties of a centroid of a tree.
These properties are also well known (see e.g.~\cite{MR396308} or~\cite[Section~2.3.4.4]{MR3077152}), and we restate them here for convenience.

\begin{lemma}
\label{lem:centroid}
Let $T$ be a plane tree.
For every edge~$(a,b)$ of~$T$, we have
\begin{equation}
\label{eq:pot-diff}
\varphi(b)-\varphi(a)=e(T^{(b,a)--})-e(T^{(a,b)--}).
\end{equation}
As a consequence, $T$ has either one centroid or two adjacent centroids.
If $e(T)$ is even, then $T$ has exactly one centroid.
\end{lemma}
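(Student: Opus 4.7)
The plan is to prove the identity~\eqref{eq:pot-diff} by a direct counting argument, and then to read off the whole structure of the centroid set from it. First I would establish~\eqref{eq:pot-diff}. Deleting the edge $(a,b)$ splits $T$ into two subtrees, which as unrooted trees are $T^{(b,a)--}$ (the component containing $a$) and $T^{(a,b)--}$ (the component containing $b$). For every vertex $v$ on $a$'s side the unique $v$-to-$b$ path in $T$ traverses $a$, so moving the base point from $a$ to $b$ increases the distance to $v$ by exactly $1$; symmetrically, for every $v$ on $b$'s side the distance decreases by $1$. Summing over all vertices yields $\varphi(b)-\varphi(a)=v(T^{(b,a)--})-v(T^{(a,b)--})$, and the identity~\eqref{eq:pot-diff} follows because $v=e+1$ in each of the two components.

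Setting $N:=v(T)$, the formula rearranges to $\varphi(b)-\varphi(a)=N-2\,v(T^{(a,b)--})$, which I would use as the sole workhorse for what follows. To bound the number of centroids, I would take any two of them, $a_0$ and $a_k$, join them by the unique path $a_0,a_1,\ldots,a_k$ in $T$, and set $m_i:=v(T^{(a_i,a_{i+1})--})$ for $0\le i\le k-1$. As $i$ grows, the far-side subtree of the walked edge strictly shrinks (the subtree on the $a_{i+1}$-side of $(a_i,a_{i+1})$ strictly contains the subtree on the $a_{i+2}$-side of $(a_{i+1},a_{i+2})$, losing at least the vertex $a_{i+1}$), so the $m_i$ are strictly decreasing and the increments $\Delta_i:=\varphi(a_{i+1})-\varphi(a_i)=N-2m_i$ are strictly increasing.

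From here the conclusion is immediate: since $a_0$ and $a_k$ are both minimisers of $\varphi$, we have $\Delta_0\ge 0$ and $\Delta_{k-1}\le 0$, which together with the strict monotonicity of the $\Delta_i$ forces $k\le 1$. Hence any two centroids of $T$ are adjacent, which gives the dichotomy of one centroid or two adjacent centroids. Moreover, in the two-centroid case the single increment $\Delta_0=0$ forces $N=2m_0$ to be even and thus $e(T)=N-1$ to be odd; contrapositively, if $e(T)$ is even then $T$ has exactly one centroid. The only step I expect to require any care is the strict monotonicity of $m_i$, but it reduces to the elementary observation above about one far-side subtree strictly containing the next; once this is recorded, the rest of the argument is essentially one line.
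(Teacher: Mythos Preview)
Your proof is correct and follows essentially the same line as the paper's: both establish~\eqref{eq:pot-diff} by the identical counting argument, and both deduce the centroid structure from the strict monotonicity of the potential differences along a path. The only cosmetic difference is that the paper applies the convexity observation to an arbitrary leaf-to-leaf path, whereas you apply it directly to the path joining two putative centroids; the underlying mechanism (the far-side subtree strictly shrinks as you walk along the path, so the increments $\Delta_i$ are strictly monotone) is the same in both arguments, and the parity conclusion for the two-centroid case is derived identically.
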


\begin{proof}
Comparing the potentials of $b$ and $a$, note that the distance of every vertex in~$T^{(b,a)--}$ to~$b$ differs by~$+1$ from its distance to~$a$.
Conversely, the distance of every vertex in~$T^{(a,b)--}$ to~$b$ differs by~$-1$ from its distance to~$a$.
Combining these observations shows that $\varphi(b)=\varphi(a)+v(T^{(b,a)--})-v(T^{(a,b)--})$.
Using that $v(T)=e(T)+1$ for both trees $T\in\{T^{(b,a)--},T^{(a,b)--}\}$, we obtain~\eqref{eq:pot-diff}.
Consider any path between two leaves of~$T$.
By~\eqref{eq:pot-diff}, the sequence of potential differences along this path forms a strictly decreasing sequence.
It follows that $T$ has either one or two centroids, and if there are two, then they must be adjacent in~$T$.
Moreover, if there are two centroids~$a$ and~$b$, then we must have $\varphi(b)-\varphi(a)=0$ along the edge $(a,b)$ of~$T$, and then \eqref{eq:pot-diff} implies that $e(T^{(b,a)--})=e(T^{(a,b)--})$, i.e., $e(T)=e(T^{(b,a)--})+e(T^{(a,b)--})+1=2e(T^{(a,b)--})+1$ is odd.
\end{proof}

The next lemma describes possible values that the parameter $\lambda(T)$ can take for a plane tree~$T$.

\begin{lemma}
\label{lem:lambda}
Let $T\in T_n$ be a plane tree with $n\geq 1$ edges.
Then $\lambda(T)$ is a divisor of~$2n$.
If $T$ has a unique centroid, then $\lambda(T)$ is even, and if $T$ has two centroids, then $\lambda(T)=2n$ if $n$ is even, and $\lambda(T)\in\{n,2n\}$ if $n$ is odd.
Moreover, for $n\geq 4$ and any even divisor~$k$ of~$2n$ or for~$k=n$ there is a plane tree~$T$ with $\lambda(T)=k$.
\end{lemma}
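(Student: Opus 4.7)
The plan is to translate $\lambda(T)$ into the algebraic quantity $2n / |\mathrm{Aut}(T)|$, where $\mathrm{Aut}(T)$ denotes the group of plane-tree automorphisms (tree automorphisms preserving the ccw cyclic ordering of neighbors at every vertex), and then to control $|\mathrm{Aut}(T)|$ via the centroid structure. To establish this identity, I would observe that a rooted tree with $n$ edges corresponds bijectively to the choice of an oriented edge $(r,b)$ in the underlying plane tree (specifying root $r$ and leftmost child $b$). The plane tree has $2n$ such oriented edges, and $\mathrm{Aut}(T)$ acts freely on them: an automorphism fixing the oriented edge $(a,b)$ must fix every neighbor of $a$ (since a cyclic rotation at $a$ fixing $b$ is trivial), and then by induction outward it fixes the whole tree. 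Hence the number of distinct rootings is $2n/|\mathrm{Aut}(T)|$, which is exactly $\lambda(T)$, and $\lambda(T) \mid 2n$ follows immediately.

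For the structural claims I would distinguish cases by the number of centroids. If the centroid $c$ is unique, every automorphism fixes $c$ and induces a cyclic rotation of the $d := \deg(c)$ subtrees at $c$; the kernel of this action is trivial by the freeness argument above, so $\mathrm{Aut}(T)$ is cyclic of some order $r$ with $r \mid d$, and the $d$ subtrees split into $d/r$ orbits of $r$ pairwise-isomorphic subtrees each. Writing $n = d + r \cdot E$, where $E$ is the sum of edge-counts of one representative per orbit, I get $r \mid n$, so $\lambda(T) = 2n/r$ is even. If $T$ has two centroids $c_1, c_2$ (which forces $n$ odd by Lemma~\ref{lem:centroid}), any automorphism either fixes both or swaps them; one fixing both must act as a cyclic rotation at $c_1$ that fixes the neighbor $c_2$, hence is trivial. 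Thus $|\mathrm{Aut}(T)| \in \{1,2\}$ and $\lambda(T) \in \{n, 2n\}$.

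For the existence statement with $n \geq 4$ I would give three explicit constructions. For $k$ an even divisor of $2n$ with $k < 2n$, set $r := 2n/k \geq 2$ and take the \emph{spider} consisting of $r$ equal arms of length $k/2$ attached to a common center; its automorphism group is the cyclic group of rotations of the arms, of order $r$, giving $\lambda = k$. For $k = n$ (which is the additional case when $n$ is odd), take the path with $n$ edges: its only nontrivial automorphism swaps the two endpoints, yielding $\lambda = n$. For $k = 2n$, take the caterpillar obtained from the path $v_1 v_2 \cdots v_{n-1}$ by attaching an extra leaf $w$ at $v_2$; the unique vertex of degree~$3$ is $v_2$, and any nontrivial cyclic rotation of its three neighbors $v_1, v_3, w$ would have to send one of the leaves $v_1, w$ to the non-leaf $v_3$, which is impossible. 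Hence the automorphism group is trivial and $\lambda = 2n$; the hypothesis $n \geq 4$ is exactly what is needed for $v_3$ to exist as a distinct non-leaf vertex.

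The most delicate step I expect is establishing the key identity $\lambda(T) = 2n/|\mathrm{Aut}(T)|$ rigorously from the paper's purely algebraic definition of $\rho$ on Dyck words (namely $1u0v \mapsto u1v0$). The cleanest route is to verify that this rewriting corresponds to advancing one step along the Euler tour of the underlying plane tree, so that the $\rho$-orbit of a rooted tree matches the $\mathrm{Aut}(T)$-orbit of the associated oriented edge. Once this dictionary is in place, the centroid case analysis and the explicit constructions are essentially routine.
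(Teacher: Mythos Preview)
Your proof is correct in substance and takes a genuinely different route from the paper. The paper never introduces the automorphism group; instead it argues directly from the rotation map~$\rho$. For the divisibility claim it uses a minimality/gcd argument ($\rho^{2n}(x)=x$ and $\rho^{\lambda}(x)=x$ force $\lambda\mid 2n$). For the unique-centroid case it observes that each $c$-subtree~$t_i$ contributes either $0$ or $2e(t_i)$ to~$\lambda(T)$, depending on whether a rotational symmetry identifies it with an earlier subtree, so the total is even. For two centroids it compares the two halves $t_c,t_{c'}$ directly. Your reformulation $\lambda(T)=2n/|\mathrm{Aut}(T)|$ packages the same information more structurally: the paper's ``contributes $0$ or $2e(t_i)$'' is exactly your orbit decomposition under the cyclic action at~$c$, and your $|\mathrm{Aut}(T)|\in\{1,2\}$ in the two-centroid case is the paper's $t_c=t_{c'}$ dichotomy. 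The paper's argument is slightly more elementary (no group language, no need to verify that $\rho$ is the Euler-tour step), while yours explains \emph{why} the parity and divisibility constraints arise. The existence constructions are essentially identical in both.

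One slip to fix: your $k=2n$ example has the wrong edge count. The path $v_1v_2\cdots v_{n-1}$ has only $n-2$ edges, so after attaching the leaf~$w$ you get $n-1$ edges, not~$n$. You want the path on $n$ vertices $v_1\cdots v_n$ (i.e.\ $n-1$ edges) with~$w$ attached at~$v_2$, matching the paper's ``path on $n-1$ edges with an extra edge at an interior vertex''. With this correction your argument that the three neighbours $v_1,w,v_3$ of~$v_2$ cannot be cyclically permuted (since $v_3$ is the unique non-leaf among them) goes through, and the need for $n\ge 4$ is precisely that $v_3$ have the further neighbour~$v_4$.
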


\begin{proof}
Let $x$ be a rooted tree such that $T=[x]$.
Note that $x=T^{(a,b)}$ for some edge~$(a,b)$ of~$T$.
As there are at most $2n$ choices for the pair~$(a,b)$, we obtain that $\lambda(T)\leq 2n$.
If $\lambda:=\lambda(T)<2n$, then we clearly have $\rho^{\lambda}(x)=x$ and $\rho^{2n}(x)=x$.
If $\lambda$ was not a divisor of $2n$, then there are integers $c\geq 1$ and $1\leq d<\lambda$ such that $2n=c\lambda+d$, and together the previous two equations would yield $\rho^d(x)=x$, and then $d<\lambda$ would contradict the definition of~$\lambda$.
We conclude that $\lambda(T)$ is indeed a divisor of~$2n$.

Now suppose that $T$ has a unique centroid~$c$.
Consider the $c$-subtrees $t_1,\ldots,t_k$ of~$T$, i.e., we have $T=[(t_1,\ldots,t_k)]$.
As the quantity~$\lambda(T)$ counts the number of different ways of rooting~$T$, we may consider the additive contribution of each subtree~$t_i$, $i=1,\ldots,k$, to this count.
Specifically, this contribution is either~0 or~$2e(t_i)$, and it is~0 if and only if $T$ has rotational symmetry around~$c$ such that $t_i$ maps to~$t_j$ for some $j<i$.
This shows that $\lambda(T)$ is even.

It remains to consider the case that $T$ has two centroids~$c$ and~$c'$.
We define the rooted trees $t_c:=T^{(c',c)--}$ and $t_{c'}:=T^{(c,c')--}$.
If $n$ is even, then $n-1$ is odd, implying that $e(t_c)\neq e(t_{c'})$.
This yields in particular that $t_c\neq t_{c'}$, and so we have
\begin{equation}
\label{eq:lambdaT2n}
\lambda(T)=2e(t_c)+2e(t_{c'})+2=2\underbrace{(e(t_c)+e(t_{c'})+1)}_{=n}=2n.
\end{equation}
The $+2$ in~\eqref{eq:lambdaT2n} comes from the two rooted trees~$T^{(c,c')}$ and~$T^{(c',c)}$.
If $n$ is odd, then if $t_c\neq t_{c'}$ we also have \eqref{eq:lambdaT2n}, i.e., $\lambda(T)=2n$, whereas if $t_c=t_{c'}$, then we have
\begin{equation}
\label{eq:lambdaTn}
\lambda(T)=2e(t_c)+1=e(t_c)+e(t_{c'})+1=n.
\end{equation}
The $+1$ in \eqref{eq:lambdaTn} comes from the rooted tree~$T^{(c,c')}=T^{(c',c)}$.

To prove the last part of the lemma, let $k<n$ be an even divisor of $2n$, i.e., we have $2n=kd$ for some integer $d\geq 3$ and $k=2\ell$ for some integer $\ell\geq 1$.
Consider the plane tree~$T$ obtained by gluing together $d$ copies of the path on $\ell$ edges at a common centroid vertex.
This tree has $d\ell=dk/2=n$ edges and satisfies $\lambda(T)=2\ell=k$.
For $k=n$, the path $T$ on $n$ edges satisfies $\lambda(T)=n$.
For $k=2n$, the path $T$ on $n-1$ edges, with an extra edge appended to one of its interior vertices, satisfies $\lambda(T)=2n$ under the assumption that $n\geq 4$.
\end{proof}

\section{Periodic paths}
\label{sec:paths}

In this section we define the basic flip sequences that together visit every necklace exactly once, following the ideas outlined in Sections~\ref{sec:idea-paths} and~\ref{sec:idea-gluing}.
We thus obtain a so-called \emph{cycle factor} in the necklace graph~$N_n$, i.e., a collection of disjoint cycles that visit every vertex of the graph exactly once.
The key properties of these cycles that we will need later are summarized in Proposition~\ref{prop:Fn} below.
Each cycle in the necklace graph~$N_n$ is obtained from a periodic path in the middle levels graph~$M_n$, and we define these periodic paths via a simple bitflip rule based on Dyck words.

The following lemma is well known (see \cite[Problem~7]{bollobas_2006}).

\begin{lemma}
\label{lem:rot}
Let $n\geq 1$.
For any $x\in A_n$, there is a unique integer $\ell=\ell(x)$ with $0\leq\ell\leq 2n$ such that the first $2n$ bits of~$\sigma^\ell(x)$ are a Dyck word.
For any $y\in B_n$, there is a unique integer $\ell=\ell(y)$ with $0\leq\ell\leq 2n$ such that the last $2n$ bits of~$\sigma^\ell(y)$ are a Dyck word.
\end{lemma}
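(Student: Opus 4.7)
The plan is to translate the Dyck-word condition into a statement about partial sums of a $\pm 1$-sequence, and then invoke (a sharpening of) the classical Cycle Lemma / argmin argument. For $x \in A_n$, set $y_i := +1$ if $x_i = 1$ and $y_i := -1$ if $x_i = 0$, and let $S_k := \sum_{i=1}^k y_i$, so $S_0 = 0$ and $S_{2n+1} = n - (n+1) = -1$. Extend $S$ to all nonnegative integers by $S_{k+(2n+1)} := S_k - 1$; this records partial sums under cyclic repetition of $x$. I first observe that the first $2n$ bits of $\sigma^\ell(x)$ form a Dyck word if and only if $T_k := S_{\ell+k} - S_\ell \ge 0$ for all $1 \le k \le 2n$: the required balance $T_{2n} = 0$ is automatic, since $T_{2n+1} = -1$ and $T_{2n+1} - T_{2n} \in \{\pm 1\}$ force $T_{2n} \in \{-2, 0\}$ and the other value is excluded by $T_{2n} \ge 0$.

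Define $\ell(x)$ to be the smallest index in $\{0, 1, \ldots, 2n\}$ at which $\min_{0 \le j \le 2n} S_j$ is attained. To verify $T_k \ge 0$ for $1 \le k \le 2n$, I split into two cases. When $\ell(x) + k \le 2n$, minimality immediately gives $T_k = S_{\ell(x)+k} - S_{\ell(x)} \ge 0$. When $\ell(x) + k > 2n$, the periodic extension yields $T_k = S_{\ell(x)+k-(2n+1)} - 1 - S_{\ell(x)}$; here $\ell(x)+k-(2n+1) < \ell(x)$, and by the \emph{first-minimum} choice of $\ell(x)$ we have the strict inequality $S_{\ell(x)+k-(2n+1)} > S_{\ell(x)}$, i.e.\ $S_{\ell(x)+k-(2n+1)} - S_{\ell(x)} \ge 1$, so $T_k \ge 0$. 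For uniqueness, suppose $\ell' \ne \ell(x)$ also works; pick $k \in \{1, \ldots, 2n\}$ with $\ell' + k \equiv \ell(x) \pmod{2n+1}$. If $\ell' < \ell(x)$, then $T'_k = S_{\ell(x)} - S_{\ell'} < 0$ by the strict first-minimum property. If $\ell' > \ell(x)$, then $T'_k = S_{\ell(x)} - 1 - S_{\ell'} \le -1 < 0$. Either way, a contradiction.

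For $y \in B_n$ the argument is dual: now $S_{2n+1} = +1$, and the condition that the last $2n$ bits of $\sigma^\ell(y)$ form a Dyck word translates (upon checking the forced first bit is a $1$) into $T_k \ge 1$ for all $1 \le k \le 2n+1$, i.e.\ $S_\ell$ must strictly undercut $S_{\ell+1}, \ldots, S_{\ell+2n}$ in the cyclic sense. Taking $\ell(y)$ to be the \emph{largest} index in $\{0, \ldots, 2n\}$ achieving $\min_{0 \le j \le 2n} S_j$ and repeating the same two-case analysis (the periodic extension now adds $+1$ per full cycle, so the inequalities flip sign in exactly the right way) yields both existence and uniqueness. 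The only real subtlety, and what I expect to be the main technical point, is settling the correct tie-breaking rule (first versus last occurrence of the minimum) so that the strict inequality needed for the wrap-around case comes out on the right side in both the $A_n$ and $B_n$ situations; once this is pinned down, both halves of the lemma reduce to the same uniqueness-of-argmin observation.
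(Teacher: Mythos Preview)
Your argument is correct: the translation to $\pm 1$ partial sums, the first-minimum choice for $A_n$, the last-minimum choice for $B_n$, and the two-case wrap-around analysis all go through as written, and the uniqueness checks are clean. One tiny cosmetic point: in the $B_n$ case you could note that $T_{2n+1}=1$ holds automatically, so the content of the condition is really $T_k\ge 1$ for $1\le k\le 2n$; this changes nothing in the proof.

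As for comparison with the paper: the paper does not prove this lemma at all. It states the result as well known and cites \cite[Problem~7]{bollobas_2006}. Your proof is exactly the standard Cycle Lemma (Dvoretzky--Motzkin / Raney) argument that the cited reference would unpack, so there is no methodological divergence to discuss---you have simply supplied the details the paper chose to omit.
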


For any $x\in A_n$, we let $t(x)\in D_n$ denote the first $2n$ bits of $\sigma^\ell(x)$ where $\ell:=\ell(x)$, i.e., we have $\sigma^\ell(x)=t(x)\,0$.
Similarly, for any $y\in B_n$, we let $t(y)\in D_n$ denote the last $2n$ bits of $\sigma^\ell(y)$ where $\ell:=\ell(y)$, i.e., we have $\sigma^\ell(y)=1\,t(y)$.
In the following it will be crucial to consider the rooted trees corresponding to~$t(x)$ and~$t(y)$.
By Lemma~\ref{lem:rot}, every bitstring~$x\in A_n$ and every bitstring~$x\in B_n$ can be identified uniquely with the rooted tree~$t(x)$ and the integer~$\ell(x)$. 

In the following we introduce a bijection~$f$ on the set~$A_n\cup B_n$ that defines the basic flip sequences visiting every necklace exactly once.
Consider an $x\in A_n$ with $\ell(x)=0$, i.e., we have
\begin{subequations}
\label{eq:f}
\begin{equation}
\label{eq:xdec}
x=\underbrace{1\,u\,0\,v}_{t(x)}\,0
\end{equation}
with $u,v\in D$.
Then we define
\begin{equation}
\label{eq:ydec}
y:=f(x)=1\,\underbrace{u\,1\,v\,0}_{\rho(t(x))}\in B_n.
\end{equation}
Note that we have $\ell(y)=0$.
We then define
\begin{equation}
\label{eq:ffx0}
f(y)=f(f(x)):=0\,\underbrace{u\,1\,v\,0}_{\rho(t(x))}\in A_n.
\end{equation}
Note that we have $\ell(f(y))=1$.
We extend these definitions to all $x\in A_n\cup B_n$ by setting
\begin{equation}
\label{eq:ffx}
f(x):=\sigma^{-\ell}(f(\sigma^\ell(x))), \text{ where } \ell:=\ell(x).
\end{equation}
\end{subequations}
This definition is illustrated in Figure~\ref{fig:f4}.
It follows directly from these definitions that the mapping $f:A_n\cup B_n\rightarrow A_n\cup B_n$ is invertible.

\begin{figure}
\includegraphics{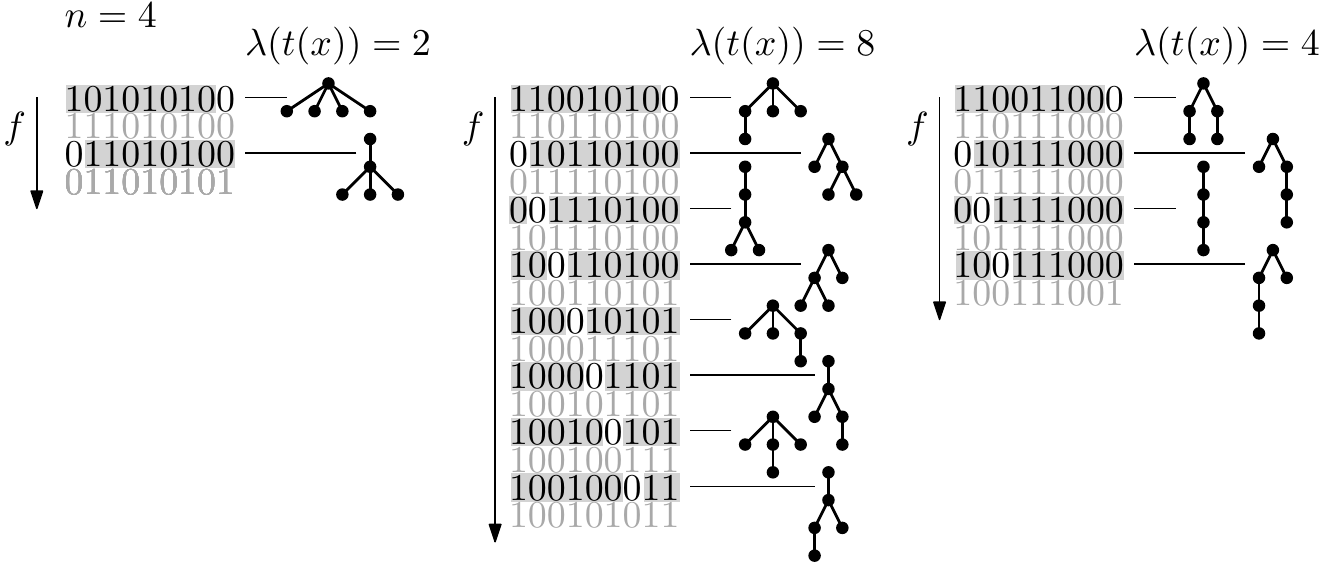}
\caption{Definition of the function~$f$ for $n=4$.
The Dyck words in the bitstrings are highlighted by gray boxes, and the corresponding rooted trees~$t(x)$ for the shown bitstrings $x\in A_4$, are displayed at the side.
Bitstrings from the set~$B_4$ are shown in gray.
Consecutive trees in each column differ by tree rotation.
As rotating the last tree yields the first tree in each column, the cycles in the necklace graph defined by~$f$ wrap around at the bottom and top.}
\label{fig:f4}
\end{figure}

From~\eqref{eq:f} we obtain that for all $x\in A_n$ we have
\begin{equation}
\label{eq:rot}
t(f(f(x)))=t(f(x))=\rho(t(x)).
\end{equation}
Moreover, \eqref{eq:xdec}--\eqref{eq:ffx0} show that if $\ell(x)=0$, then we have $\ell(f(x))=0$ and $\ell(f(f(x)))=1$.
From this and \eqref{eq:ffx}, we thus obtain for all $x\in A_n$ that
\begin{equation}
\label{eq:shift}
\ell(f(x))=\ell(x) \quad \text{and} \quad \ell(f(f(x)))=\ell(x)+1.
\end{equation}

In words, if we cyclically read a bitstring $x\in A_n$ starting at position $p:=\ell(x)+1$ and consider the first $2n$ bits as a rooted tree, ignoring the extra 0-bit, then the bitstring $f(x)$ read starting from position~$p$ has the extra 1-bit plus the rotated tree, and the bitstring $f(f(x))$ read starting from position~$p+1$ is the same rotated tree plus the extra 0-bit.

For any $x\in A_n\cup B_n$, we define the integer
\begin{equation}
\label{eq:kx}
\kappa(x):=\min\big\{i>0\mid \neck{f^i(x)}=\neck{x}\big\}.
\end{equation}

The quantity $\kappa(x)$ expresses how many times $f$ has to applied to~$x$ until we return to the same necklace as~$x$ for the first time.
The following lemma summarizes important properties of the parameter~$\kappa(x)$, showing in particular that it can be expressed in terms of the tree~$t(x)$.

\begin{lemma}
\label{lem:kx}
For any $n\geq 1$ and $x\in A_n\cup B_n$ we have the following:
\begin{enumerate}[label=(\roman*),leftmargin=8mm]
\item For any $y\in\neck{x}$ and any integer $i\geq 0$ we have $\neck{f^i(x)}=\neck{f^i(y)}$.
In particular, we have $\kappa(y)=\kappa(x)$.
\item For any integer $i\geq 0$ we have $\neck{f^i(x)}=\neck{f^{\kappa(x)+i}(x)}$.
\item For any integers $0\leq i<j<\kappa(x)$ we have $\neck{f^i(x)}\neq\neck{f^j(x)}$.
\item For any integer $i\geq 0$ we have $\kappa(f^i(x))=\kappa(x)$.
\item We have $\kappa(x)=2\lambda(t(x))$.
\end{enumerate}
\end{lemma}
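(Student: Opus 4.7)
The plan is to first prove rotation equivariance $f\circ\sigma^k=\sigma^k\circ f$, from which (i)--(iv) follow almost formally, and then establish (v) by exploiting how $f$ interacts with the tree rotation $\rho$ via \eqref{eq:rot} and \eqref{eq:shift}.

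For equivariance, a direct application of Lemma~\ref{lem:rot} gives $\ell(\sigma^k(x))=\ell(x)-k\pmod{2n+1}$, so $\sigma^{\ell(\sigma^k(x))}(\sigma^k(x))=\sigma^{\ell(x)}(x)$. Plugging this into the extension formula \eqref{eq:ffx} yields $f(\sigma^k(x))=\sigma^k(f(x))$, and by induction $f^i(\sigma^k(x))=\sigma^k(f^i(x))$ for every $i\geq 0$. Claim (i) is then immediate, since any $y\in\neck{x}$ has the form $y=\sigma^k(x)$ and applying $\sigma^k$ does not change necklaces. For (ii), write $f^{\kappa(x)}(x)=\sigma^k(x)$ by the definition of $\kappa$, and observe that $f^{\kappa(x)+i}(x)=f^i(\sigma^k(x))=\sigma^k(f^i(x))\in\neck{f^i(x)}$. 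For (iii), suppose for contradiction that $\neck{f^i(x)}=\neck{f^j(x)}$ with $0\leq i<j<\kappa(x)$; then $f^j(x)=\sigma^k(f^i(x))=f^i(\sigma^k(x))$ for some $k$, and since $f$ is invertible, $f^{j-i}(x)=\sigma^k(x)\in\neck{x}$, contradicting $0<j-i<\kappa(x)$. For (iv), the very same invertibility argument shows that $\neck{f^{i+m}(x)}=\neck{f^i(x)}$ is equivalent to $\neck{f^m(x)}=\neck{x}$, hence $\kappa(f^i(x))=\kappa(x)$.

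The main content of the lemma is (v). Inspecting \eqref{eq:xdec}--\eqref{eq:ffx0} shows that $f$ leaves $t$ unchanged when it maps $B_n$ to $A_n$, while it applies one tree rotation $\rho$ when it maps $A_n$ to $B_n$; combined with rotation equivariance and the rotation-invariance of $t$, this yields $t(f^2(x))=\rho(t(x))$ for every $x\in A_n\cup B_n$, and inductively $t(f^{2k}(x))=\rho^k(t(x))$. Moreover, two bitstrings at the same level (both in $A_n$ or both in $B_n$) lie in the same necklace if and only if they share the same value of $t$, a consequence of the uniqueness in Lemma~\ref{lem:rot}. Since $f$ toggles between $A_n$ and $B_n$ at every step, $\neck{f^j(x)}=\neck{x}$ is possible only for even $j=2k$, and this is then equivalent to $\rho^k(t(x))=t(x)$. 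The smallest such $k\geq 1$ is $\lambda(t(x))$ by definition, giving $\kappa(x)=2\lambda(t(x))$.

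The main technical hurdle is pinning down the equivariance cleanly from the recursive definition of $f$ and keeping track of how $\ell$ transforms under rotation; once that is in hand, (i)--(iv) become essentially formal, and (v) reduces to tracking $t$ and the parity of~$j$.
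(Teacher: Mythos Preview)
Your proof is correct and follows essentially the same approach as the paper: both arguments rest on the rotation equivariance $f\circ\sigma^k=\sigma^k\circ f$ (which the paper invokes only implicitly via~\eqref{eq:ffx}) for parts (i)--(iv), and on the relation $t(f^2(x))=\rho(t(x))$ from~\eqref{eq:rot} for part~(v). Your treatment of~(v) is in fact more complete than the paper's one-line argument, as you explicitly supply the parity observation that $\kappa(x)$ must be even and the fact that same-level bitstrings share a necklace iff they share the same value of~$t$, both of which the paper leaves to the reader.
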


\begin{proof}
(i) This follows directly from~\eqref{eq:ffx}.

(ii) By the definition of~$\kappa(x)$, we have $\neck{x}=\neck{f^{\kappa(x)}(x)}$.
Using~\eqref{eq:ffx}, this gives $\neck{f^i(x)}=\neck{f^i(f^{\kappa(x)}(x))}=\neck{f^{\kappa(x)+i}(x)}$.

(iii) Suppose for the sake of contradiction that $y:=f^i(x)$ and $z=f^j(x)$ with $0\leq i<j<\kappa(x)$ satisfy $\neck{y}=\neck{z}$.
Then, using that $f$ is invertible, we obtain from~\eqref{eq:ffx} that $\neck{x}=\neck{f^{-i}(y)}=\neck{f^{j-i}(z)}$ with $j-i<\kappa(x)$, contradicting the definition of~$\kappa(x)$ in~\eqref{eq:kx}.

(iv) It suffices to prove that $\kappa(f(x))=\kappa(x)$.
Observe that we have $\neck{f^{\kappa(x)}(f(x))}=\neck{f^{\kappa(x)+1}(x)}=\neck{f(x)}$ by~(ii).
On the other hand, we have $\neck{f^j(f(x))}\neq\neck{f(x)}$ for all $1\leq j<\kappa(x)$ by~(iii).
Combining these two observations proves that $\kappa(f(x))=\kappa(x)$.

(v) By~\eqref{eq:rot}, two applications of~$f$ correspond to one rotation of the tree~$t(x)$.
The statement now follows from the definition~\eqref{eq:kx}.
\end{proof}

For any $x\in A_n\cup B_n$ we define
\begin{subequations}
\label{eq:Pxn}
\begin{equation}
\label{eq:Px}
\begin{split}
P(x)&:=\big(x,f(x),f^2(x),\ldots,f^{\kappa(x)-1}(x)\big). \\
\end{split}
\end{equation}
By~\eqref{eq:kx}, $P(x)$ is a periodic path in the middle levels graph~$M_n$, and by Lemma~\ref{lem:kx}~(iii), $\neck{P(x)}$ is a cycle in the necklace graph~$N_n$.
For any $y\in\neck{x}$ and any integer $i\geq 0$, combining Lemma~\ref{lem:kx}~(i)+(iv) shows that $\kappa(f^i(y))=\kappa(x)$, and so~$\neck{P(x)}$ and~$\neck{P(f^i(y))}$ describe the same cycle, differing only in the choice of the starting vertex (recall~\eqref{eq:ffx}).
We may thus define a cycle factor in~$N_n$ by
\begin{equation}
\label{eq:Fn}
\cF_n:=\big\{\neck{P(x)}\mid x\in A_n\cup B_n\big\}.
\end{equation}
\end{subequations}
This definition is illustrated in Figures~\ref{fig:n3} and~\ref{fig:f4} for $n=3$ and $n=4$, respectively.
The following proposition summarizes the observations from this section.

\begin{proposition}
\label{prop:Fn}
For any $n\geq 2$, the cycle factor $\cF_n$ defined in~\eqref{eq:Pxn} has the following properties:
\begin{enumerate}[label=(\roman*),leftmargin=8mm]
\item For every $x\in A_n\cup B_n$, the $2i$th vertex~$y$ after~$x$ on the periodic path~$P(x)$ satisfies $t(y)=\rho^i(t(x))$.
Consequently, we can identify the path~$P(x)$ and the cycle~$\neck{P(x)}$ with the plane tree~$[t(x)]$.
\item The number of vertices of the path~$P(x)$ and the cycle~$\neck{P(x)}$ is $2\lambda(t(x))\geq 4$, and we have $\ell(f^{2i}(x))=\ell(x)+i$ for all $i=0,\ldots,\lambda(t(x))$.
\item The cycles of~$\cF_n$ are in bijection with plane trees with $n$ edges.
\end{enumerate}
\end{proposition}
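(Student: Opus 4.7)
The plan is to chain together the already-established identities~\eqref{eq:rot},~\eqref{eq:shift}, Lemma~\ref{lem:kx}, and Lemma~\ref{lem:lambda}; the proof is largely careful bookkeeping.

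For parts~(i) and~(ii) I would proceed by parallel inductions on~$i$. In part~(i), the identity $t(f^{2i}(x))=\rho^i(t(x))$ follows for $x\in A_n$ directly from~\eqref{eq:rot} applied at each step, using the $A_n/B_n$ alternation of $f$ to keep $f^{2i}(x)$ in $A_n$; the case $x\in B_n$ reduces to the $A_n$ case by writing $x=f(y)$ with $y\in A_n$ and invoking $t(f(y))=\rho(t(y))$ from~\eqref{eq:rot}. The identification of $P(x)$ and $\neck{P(x)}$ with the plane tree $[t(x)]$ is then immediate from $\{\rho^i(t(x)):i\geq 0\}=[t(x)]$. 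Part~(ii) has two ingredients: the length $\kappa(x)=2\lambda(t(x))$ is exactly Lemma~\ref{lem:kx}(v), and the bound $\lambda(t(x))\geq 2$ for $n\geq 2$ is a short case distinction from Lemma~\ref{lem:lambda} (a unique centroid forces $\lambda$ to be even, while two centroids force $\lambda\geq n\geq 2$). The identity $\ell(f^{2i}(x))=\ell(x)+i$ is a second induction using the second part of~\eqref{eq:shift}, with the same $B_n\to A_n$ reduction via the first part of~\eqref{eq:shift} ($\ell(f(z))=\ell(z)$).

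For part~(iii) the plan is to verify that the map $\neck{P(x)}\mapsto[t(x)]$ is a bijection between cycles of~$\cF_n$ and plane trees in~$T_n$ by a counting argument. Well-definedness follows from~(i), and surjectivity is immediate: any plane tree $T=[z]$ with $z\in D_n$ is realized by the path starting at $z\cdot 0\in A_n$, whose tree is $t(z\cdot 0)=z$. The main step is injectivity. Here I would first note, as a quick consequence of Lemma~\ref{lem:rot}, that $t(\cdot)$ is constant on each necklace and that $x$ is determined by the pair $(t(x),\ell(x))$, so that the necklaces with a given plane tree~$T$ number exactly $2\lambda(T)$, namely one necklace in~$A_n$ and one in~$B_n$ for each of the $\lambda(T)$ Dyck words in~$T$. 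This count matches the cycle length $\kappa(x)=2\lambda(t(x))$ from~(ii), so by~(i) the cycle $\neck{P(x)}$ must visit \emph{every} necklace with plane tree~$[t(x)]$, forcing distinct cycles to correspond to distinct plane trees.

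The main obstacle is precisely this counting match in part~(iii): showing cleanly that the cycle exhausts all necklaces of its plane tree requires balancing the rotation-invariance of~$t$, the $A_n/B_n$ alternation of~$f$, and Lemma~\ref{lem:kx}(v). Once this step is in place, the remaining parts are mechanical applications of the already-established identities.
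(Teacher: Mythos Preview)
Your proposal is correct and follows essentially the same route as the paper: part~(i) from~\eqref{eq:rot}, part~(ii) from Lemma~\ref{lem:kx}(v), Lemma~\ref{lem:lambda}, and~\eqref{eq:shift}, and part~(iii) from~(i). The paper's proof is far terser; in particular, for~(iii) it simply declares the bijection an immediate consequence of~(i), without the counting argument you give. Your counting is valid, but note that injectivity is more direct than you make it: since $\cF_n$ is already known to be a cycle factor, two cycles with the same plane tree~$T$ would, by~(i), both contain the necklace $\neck{z\,0}$ for any fixed $z\in T$, contradicting disjointness---no cardinality match is needed.
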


\begin{proof}
Clearly, (i) follows from~\eqref{eq:rot} and the definition~\eqref{eq:Px}.
Moreover, (iii) is an immediate consequence of~(i).
To prove~(ii), note that $|P(x)|=\kappa(x)$ by the definition~\eqref{eq:Px}, and use that $\kappa(x)=2\lambda(t(x))$ by Lemma~\ref{lem:kx}~(v).
As $n\geq 2$, Lemma~\ref{lem:lambda} guarantees that $\lambda(t(x))\geq 2$, and so $|P(x)|=2\lambda(t(x))\geq 4$.
The second part of claim~(ii) follows directly from~\eqref{eq:shift}.
\end{proof}

By Proposition~\ref{prop:Fn}~(iii), the number of cycles of~$\cF_n$ is OEIS sequence~A002995~\cite{oeis}, which for $n\geq 1$ starts as $1,1,2,3,6,14,34,95,280,854,\ldots$.
Asymptotically, we have $|\cF_n|=(1+o(1))4^n/(2\sqrt{\pi}n^{5/2})$ as $n$ tends to infinity, i.e., the number of plane trees and hence the number of cycles of~$\cF_n$ grows exponentially.

\section{Gluing the periodic paths}
\label{sec:gluing}

In this section we implement the ideas outlined in Section~\ref{sec:idea-gluing}, showing how to glue pairs of periodic paths to one longer periodic path.
It turns out that the gluing operation involving two periodic paths can be interpreted as a local modification operation on the two corresponding rooted trees; see Figure~\ref{fig:pull}.
Repeating this gluing process will eventually produce a single periodic path that corresponds to a Hamilton cycle in the necklace graph.
The most technical aspect of this approach is to ensure that multiple gluing steps do not interfere with each other, and the conditions that ensure this are captured in Proposition~\ref{prop:Cxy} below.

We define the two rooted trees $s_n:=1(10)^{n-1}0\in D_n$ for $n\geq 3$ and $s_n':=10s_{n-1}\in D_n$ for $n\geq 4$.
Note that both $s_n$ and $s_n'$ have $n$ edges, $s_n$ is a star, and $s_n'$ is obtained from a star by appending an additional edge to one leaf.
For $n\geq 4$, consider two Dyck words $x,y\in D_n$ with $(x,y)\neq (s_n,s_n')$ of the form
\begin{equation}
\label{eq:gluing}
x=1\,1\,0\,u\,0\,v, \quad y=1\,0\,1\,u\,0\,v, \quad \text{ with } u,v\in D.
\end{equation}

\begin{wrapfigure}{r}{0.42\textwidth}
\includegraphics{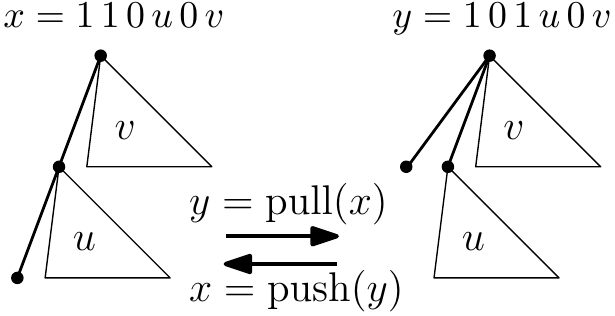}
\caption{A gluing pair~$(x,y)$ and the pull/push operations between the corresponding rooted trees.}
\label{fig:pull}
\end{wrapfigure}
We refer to $(x,y)$ as a \emph{gluing pair}, and we use $G_n$ to denote the set of all gluing pairs $(x,y)$, $x,y\in D_n$.
Considering the corresponding rooted trees, we say that $y$ is obtained from~$x$ by the \emph{pull} operation, and we refer to the inverse operation that transforms~$y$ into~$x$ as the \emph{push} operation; see Figure~\ref{fig:pull}.
We write this as $y=\pull(x)$ and $x=\push(y)$.
We refer to any~$x$ as in~\eqref{eq:gluing} as a \emph{pullable tree}, and to any~$y$ as in~\eqref{eq:gluing} as a \emph{pushable tree}.
Also, we refer to the subtrees~$u$ and~$v$ in~\eqref{eq:gluing} as the \emph{left and right subtree of~$x$ or~$y$}, respectively.
A pull removes the leftmost edge that leads from the leftmost child of the root of~$x$ to a leaf, and reattaches this edge as the leftmost child of the root, yielding the tree~$y=\pull(x)$.
A push removes the leaf that is the leftmost child of the root of~$y$, and reattaches this edge as the leftmost child of the second child of the root of~$y$, yielding the tree~$x=\push(y)$.
Under this viewpoint, we can use the same identifiers for vertices and edges in~$x$ and~$y$.

The next lemma asserts that the centroid(s) of a tree are invariant under certain pull/push operations, and that these operations change the tree potential only by~$\pm 1$.

\begin{lemma}
\label{lem:potential}
Let $(x,y)\in G_n$ be a gluing pair as in~\eqref{eq:gluing}.
If $x$ has a centroid in its right subtree, then this is also a centroid of~$y$ and we have $\varphi(y)=\varphi(x)-1$.
If $y$ has a centroid in its left subtree, then this is also a centroid of~$x$ and we have $\varphi(y)=\varphi(x)+1$.
\end{lemma}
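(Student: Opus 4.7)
The plan is to view $x$ and $y$ as trees on the \emph{same} vertex set that differ only in one edge. Writing $x=1\,1\,0\,u\,0\,v$ with root $r$, leftmost child $c$ of $r$, and leftmost child $\ell$ of $c$ (a leaf), let $U$ be the set of $p:=|u|/2$ descendants of $c$ other than $\ell$ and $V$ the set of $q:=|v|/2$ descendants of $r$ outside $c$'s subtree. Then $y=\pull(x)$ is obtained by replacing the edge $(c,\ell)$ with the edge $(r,\ell)$, so $\ell$ becomes the new leftmost child of $r$. All the work reduces to tracking how this one edge swap affects vertex potentials.

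Since the only altered edge is incident to $\ell$, for every $a\neq \ell$ the only affected distance is $d(a,\ell)$, giving
\[
\varphi_y(a)-\varphi_x(a)=d(a,r)-d(a,c),
\]
which evaluates to $+1$ when $a\in\{c\}\cup U$ and to $-1$ when $a\in\{r\}\cup V$, by the partition induced by removing the edge $(r,c)$ common to both trees. The migrating vertex itself obeys the exceptional identity $\varphi_y(\ell)-\varphi_x(\ell)=p-q$, obtained by a direct sum.

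For the first claim, given a centroid $a\in V$ of $x$ (so $\varphi_y(a)=\varphi_x(a)-1$), I would verify $\varphi_y(b)\ge\varphi_y(a)$ for each $b$. For $b\in\{r\}\cup V$ both potentials drop by $1$ and the centroid property of $a$ in $x$ transfers; for $b\in\{c\}\cup U$ the potential rises by $1$, producing a slack of~$2$. The only delicate case is $b=\ell$: since $\ell$ is a leaf of $x$ adjacent to $c$, Lemma~\ref{lem:centroid} applied to the edge $(c,\ell)$ of $x$ gives $\varphi_x(\ell)-\varphi_x(c)=n-1=p+q+1$, so combining with the $p-q$ shift yields $\varphi_y(\ell)=\varphi_x(c)+2p+1\ge \varphi_y(a)+2p+2$. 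Hence $a$ is a centroid of $y$ and $\varphi(y)=\varphi(x)-1$.

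The second claim follows by the symmetric argument, swapping the roles of the two sides: now $\ell$ is a leaf of $y$ adjacent to $r$, so Lemma~\ref{lem:centroid} along $(r,\ell)$ in $y$ gives $\varphi_y(\ell)-\varphi_y(r)=n-1$, and the analogous case analysis upgrades any centroid $a\in U$ of $y$ to a centroid of $x$ with $\varphi(y)=\varphi(x)+1$. The main obstacle is the sole irregular vertex $\ell$, whose potential change is $\pm(p-q)$ rather than the clean $\pm 1$ and whose sign can go either way depending on which flanking subtree is larger; the saving grace is that $\ell$ is a leaf in whichever of $x, y$ is relevant for each claim, so the leaf-to-neighbor gap of $n-1$ comfortably dominates the $p-q$ skew.
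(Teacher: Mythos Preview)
Your proof is correct and follows essentially the same approach as the paper: view $x$ and $y$ as trees on the same vertex set differing in a single edge, and track the $\pm 1$ change in potential for every vertex other than the migrating leaf~$\ell$. The one place where you diverge is in handling~$\ell$. You compute $\varphi_y(\ell)$ explicitly via the leaf--neighbor gap $n-1$ from Lemma~\ref{lem:centroid} and bound it against $\varphi_y(a)$. The paper instead disposes of~$\ell$ in one line: since $\ell$ is a leaf of~$y$ (and of~$x$) and the tree has more than two vertices, $\ell$ cannot be a centroid of either tree, so it suffices to check that the assumed centroid minimizes the potential among all \emph{other} vertices, which your $\pm 1$ analysis already does. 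This shortcut would spare you the arithmetic with $p$, $q$, and $n-1$ in both halves of the argument.
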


\begin{proof}
Let $a$ be the leaf incident to the edge in which~$x$ and~$y$ differ.
Clearly, $a$ is not a centroid of~$y$.
Moreover, for any vertex~$b$ in the subtree~$u$ of~$x$, the pull operation that transforms~$x$ into~$y$ changes the potential of~$b$ by $+1$.
Similarly, for any vertex~$b$ in the subtree~$v$ of~$x$, the pull operation changes the potential of~$b$ by $-1$.
This proves the first part of the lemma.
The proof of the second part is analogous.
\end{proof}

For a gluing pair~$(x,y)\in G_n$, we define $x^k:=f^k(x\,0)$ and $y^k:=f^k(y\,0)$ for $k\geq 0$.
These sequences agree with the first vertices of the periodic paths~$P(x)$ and~$P(y)$, respectively, defined in~\eqref{eq:Px}.
Using the definition~\eqref{eq:f}, a straightforward calculation yields
\begin{equation}
\label{eq:xyi}
\begin{pmatrix} x^0 \\ x^1 \\ x^2 \\ x^3 \\ x^4 \\ x^5 \\ x^6 \end{pmatrix} = \begin{pmatrix} 1\,1\,0\,u\,0\,v\,0 \\ 1\,1\,0\,u\,1\,v\,0 \\ 0\,1\,0\,u\,1\,v\,0 \\ 0\,1\,1\,u\,1\,v\,0 \\ 0\,0\,1\,u\,1\,v\,0 \\ 1\,0\,1\,u\,1\,v\,0 \\ 1\,0\,0\,u\,1\,v\,0 \end{pmatrix},
\quad
\begin{pmatrix} y^0 \\ y^1 \end{pmatrix} = \begin{pmatrix} 1\,0\,1\,u\,0\,v\,0 \\ 1\,1\,1\,u\,0\,v\,0 \end{pmatrix}.
\end{equation}
From this we obtain
\begin{equation}
\label{eq:Txi}
\begin{pmatrix} t(x^0) \\ t(x^2)=\rho(t(x^0)) \\ t(x^4)=\rho^2(t(x^0)) \\ t(x^6)=\rho^3(t(x^0)) \end{pmatrix} = \begin{pmatrix} 1\,1\,0\,u\,0\,v \\ 1\,0\,u\,1\,v\,0 \\ 1\,u\,1\,v\,0\,0 \\ u\,1\,v\,0\,1\,0 \end{pmatrix},
\quad
t(y^0)=(1\,0\,1\,u\,0\,v\,0)
\end{equation}
(recall Proposition~\ref{prop:Fn}~(i)).

The next lemma shows that the bitstrings listed in \eqref{eq:xyi} all belong to distinct necklaces.

\begin{lemma}
\label{lem:min-length}
Let $(x,y)\in G_n$ be a gluing pair as in~\eqref{eq:gluing}.
Then we have $|P(x^0)|=\kappa(x^0)\geq 8$ and $|P(y^0)|=\kappa(y^0)\geq 4$.
\end{lemma}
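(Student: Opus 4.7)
The plan is to translate both length bounds into bounds on the $\lambda$-parameter of a plane tree and then apply Lemma~\ref{lem:lambda} together with a symmetry characterization of the star. Since $x,y\in D_n$, a direct check gives $\ell(x^0)=\ell(y^0)=0$ and hence $t(x^0)=x$, $t(y^0)=y$. By Proposition~\ref{prop:Fn}(ii), $|P(x^0)|=2\lambda([x])$ and $|P(y^0)|=2\lambda([y])$, so it is enough to show $\lambda([x])\ge 4$ and $\lambda([y])\ge 2$.

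The easier bound $\lambda([y])\ge 2$ is immediate from Lemma~\ref{lem:lambda}: for $n\ge 4$, every value $\lambda$ can take is either an even divisor of $2n$ (hence $\ge 2$), or equals $n\ge 5$, or equals $2n\ge 8$. The same lemma also rules out $\lambda([x])\in\{1,3\}$ for $n\ge 4$ (an odd value would require the two-centroid case, forcing $\lambda\in\{n,2n\}\subset\{n,\ldots\}\setminus\{1,3\}$), so for the harder bound $\lambda([x])\ge 4$ it is enough to exclude $\lambda([x])=2$.

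Suppose for contradiction that $\lambda([x])=2$, which is equivalent to $\rho^2(x)=x$. Comparing $x=1\,1\,0\,u\,0\,v$ with the expression $\rho^2(x)=1\,u\,1\,v\,0\,0$ from \eqref{eq:Txi} yields, after cancelling the leading $1$, the equation $1\,0\,u\,0\,v=u\,1\,v\,0\,0$. Rather than unwind this symbolically, I would argue structurally: $\lambda([x])=2n/|\mathrm{Aut}([x])|=2$ gives $|\mathrm{Aut}([x])|=n$, the maximum possible order of the automorphism group of an $n$-edge plane tree. For $n\ge 4$ the two-centroid case yields $|\mathrm{Aut}|\le 2<n$, so $[x]$ has a unique centroid $c$; every automorphism fixes $c$ and cyclically permutes its subtrees, so an automorphism group of order $n$ forces exactly $n$ equal subtrees at $c$, each of one edge. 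Thus $[x]$ is the $n$-edge star. The star admits only two rootings, $(10)^n$ (at the center) and $s_n=1\,(10)^{n-1}\,0$ (at a leaf), and only the latter starts with $1\,1\,0$. Hence $x=s_n$ (with $u=(10)^{n-2}$ and $v=\varepsilon$), so $y=1\,0\,1\,u\,0\,v=1\,0\,s_{n-1}=s_n'$, contradicting the hypothesis $(x,y)\ne(s_n,s_n')$.

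The main obstacle is the structural claim that $|\mathrm{Aut}([x])|=n$ forces $[x]$ to be a star; a purely symbolic alternative is to expand the equation $1\,0\,u\,0\,v=u\,1\,v\,0\,0$ by writing $u=1\,u_1\,0\,u_2$ whenever $u\ne\varepsilon$, comparing initial characters to force $u_1=\varepsilon$, thereby reducing to the same equation in $u_2$ in place of $u$, and iterating until the base case $u_2=\varepsilon$ forces $v=\varepsilon$ and $u=(10)^{n-2}$, recovering $x=s_n$ as above.
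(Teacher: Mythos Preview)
Your proof is correct and follows essentially the same route as the paper: convert the length bounds to $\lambda$-bounds via $\kappa=2\lambda$, use Lemma~\ref{lem:lambda} to get $\lambda\ge 2$ in general and to rule out odd values below $n$, and then argue that $\lambda=2$ forces the star, which is excluded by the hypothesis $(x,y)\neq(s_n,s_n')$. The paper's version simply asserts that $s_n$ is the unique rooted tree with $\lambda=2$ without further justification, whereas you supply two arguments (one via the automorphism count $\lambda([x])=2n/|\mathrm{Aut}([x])|$, one via symbolically unwinding $\rho^2(x)=x$); both are fine and make explicit a step the paper leaves to the reader.
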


Note that if $(x,y)=(s_n,s_n')$ then we have $\kappa(x^0)=4$ and therefore $\neck{x^0}=\neck{x^4}$ and $\neck{x^2}=\neck{x^6}$, so for this case the statement of Lemma~\ref{lem:min-length} would not hold.

\begin{proof}
Note that for any $n\geq 4$, the star $x=s_n$ is the only rooted tree with $\lambda(x)=2$.
For any other tree~$x$ we have $\lambda(x)\geq 4$ by Lemma~\ref{lem:lambda}, so by Lemma~\ref{lem:kx}~(v) we have $\kappa(x^0)=2\lambda(x)\geq 8$.

For $n\geq 4$, we have $\lambda(y)\geq 2$ for any tree $y\in D_n$ by Lemma~\ref{lem:lambda}, so by Lemma~\ref{lem:kx}~(v) we have $\kappa(y^0)=2\lambda(y)\geq 4$.
\end{proof}

Observe from~\eqref{eq:xyi} that
\begin{equation}
\label{eq:Cxy}
C(x,y):=(x^0,x^1,x^6,x^5,y^0,y^1)
\end{equation}
is a 6-cycle in the middle levels graph~$M_n$; see Figure~\ref{fig:cxy}.
The bit positions flipped along this cycle are
\begin{equation}
\label{eq:alpha-Cxy}
\alpha(C(x,y)):=(|u|+4,2,3,|u|+4,2,3).
\end{equation}

\begin{figure}
\includegraphics{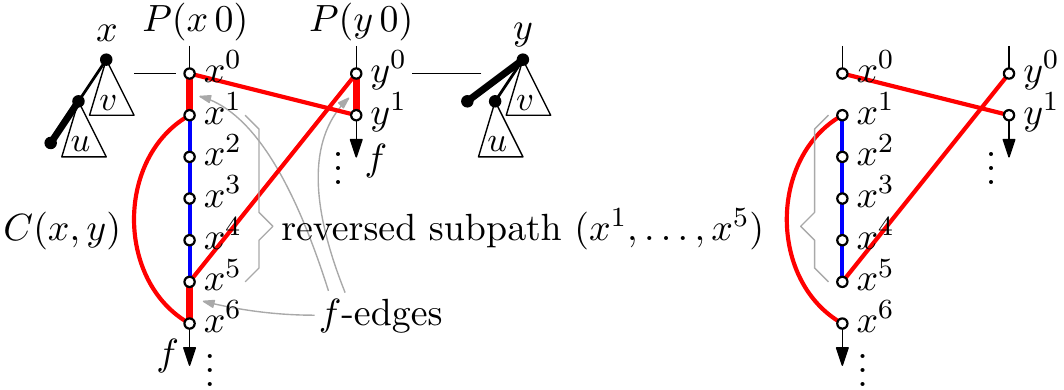}
\caption{Illustration of the gluing cycle $C(x,y)$ and several related definitions.}
\label{fig:cxy}
\end{figure}

By Lemma~\ref{lem:min-length} we have that for all $i\geq 0$ the 6-cycle $\sigma^i(C(x,y))$ has the two edges~$\sigma^i((x^0,x^1))$, $\sigma^i((x^5,x^6))$ in common with the periodic path~$\sigma^i(P(x^0))$, and the edge~$\sigma^i((y^0,y^1))$ in common with the periodic path~$\sigma^i(P(y^0))$.
We refer to these three edges as the \emph{$f$-edges of~$\sigma^i(C(x,y))$} (see Figure~\ref{fig:cxy}), and we refer to $\sigma^i(C(x,y))$ as a \emph{gluing cycle}.
Observe that if $[x]\neq [y]$, then $\neck{P(x^0)}$ and~$\neck{P(y^0)}$ are distinct cycles in the necklace graph~$N_n$ by Proposition~\ref{prop:Fn}~(i), implying that
\begin{equation}
\label{eq:Pxy}
P(x^0)\bowtie P(y^0):=\Big(x^0,y^1,y^2,\ldots,y^{2\lambda(y)-1}, 
\sigma^{-\lambda(y)}\big((y^0,x^5,x^4,x^3,x^2,x^1,x^6,x^7,\ldots,x^{2\lambda(x)-1})\big)\Big)
\end{equation}
is a periodic path in the middle levels graph~$M_n$, and together the $2n+1$ periodic paths $\bigcup_{i\geq 0}\sigma^i\big(P(x^0)\bowtie P(y^0)\big)$ visit all vertices of $\bigcup_{i\geq 0}\sigma^i\big(P(x^0)\cup P(y^0)\big)$.
To see this, recall that $|P(x^0)|=2\lambda(x)$, $|P(y^0)|=2\lambda(y)$, and $\sigma^{\lambda(y)}(y^{2\lambda(y)})=y^0$ by Proposition~\ref{prop:Fn}~(ii).
Note that the edge set of $\bigcup_{i\geq 0}\sigma^i\big(P(x^0)\bowtie P(y^0)\big)$ is the symmetric difference of the edge sets of $\bigcup_{i\geq 0}\sigma^i\big(P(x^0)\cup P(y^0)\big)$ with the gluing cycles $\bigcup_{i\geq 0}\sigma^i(C(x,y))$.
Specifically, the $f$-edges of the gluing cycles~$\sigma^i(C(x,y))$, $i\geq 0$, are removed and replaced by the other edges $\sigma^i((x^1,x^6))$, $\sigma^i((y^0,x^5))$, and $\sigma^i((x^0,y^1))$, for all $i\geq 0$; see the right hand side of Figure~\ref{fig:cxy}

In the necklace graph~$N_n$, the symmetric difference of the edge sets of the two cycles~$\neck{P(x^0)}$ and~$\neck{P(y^0)}$ with the 6-cycle~$\neck{C(x,y)}$ is a single cycle on the same vertex set as $\neck{P(x^0)}\cup \neck{P(y^0)}$.

For all $i\geq 0$, we say that the subpath $\sigma^i((x^1,\ldots,x^5))$ of~$\sigma^i(P(x^0))$ is \emph{reversed by~$\sigma^i(C(x,y))$}; see Figure~\ref{fig:cxy}.
Moreover, we say that two gluing cycles~$\sigma^i(C(x,y))$ and~$\sigma^j(C(\hx,\hy))$ are \emph{compatible}, if they have no $f$-edges in common.
We also say that $\sigma^i(C(x,y))$ and~$\sigma^j(C(\hx,\hy))$ are \emph{nested}, if the $f$-edge~$\sigma^i((y^0,y^1))$ of~$\sigma^i(C(x,y))$ belongs to the path that is reversed by~$\sigma^j(C(\hx,\hy))$; see Figure~\ref{fig:nested}.
Lastly, we say that $\sigma^i(C(x,y))$ and $\sigma^j(C(\hx,\hy))$ are \emph{interleaved}, if the $f$-edge~$\sigma^j((\hx^0,\hx^1))$ of~$\sigma^j(C(\hx,\hy))$ belongs to the path that is reversed by~$\sigma^i(C(x,y))$.

The following key proposition captures the conditions under which a pair of gluing cycles is compatible, interleaved, or nested, respectively.

\begin{proposition}
\label{prop:Cxy}
Let $n\geq 4$ and let $(x,y),(\hx,\hy)\in G_n$ be two gluing pairs with $[x]\neq [y]$, $[\hx]\neq[\hy]$, and $\{[x],[y]\}\neq\{[\hx],[\hy]\}$.
Then for any integers $i,j\geq 0$, the gluing cycles~$\sigma^i(C(x,y))$ and~$\sigma^j(C(\hx,\hy))$ defined in~\eqref{eq:Cxy} have the following properties:
\begin{enumerate}[label=(\roman*),leftmargin=8mm]
\item $\sigma^i(C(x,y))$ and $\sigma^j(C(\hx,\hy))$ are compatible.
\item $\sigma^i(C(x,y))$ and $\sigma^j(C(\hx,\hy))$ are interleaved if and only if $i=j+2$ and $\hx=\rho^2(x)$.
\item $\sigma^i(C(x,y))$ and $\sigma^j(C(\hx,\hy))$ are nested if and only $i=j-1$ and $\hx=\rho^{-1}(y)$.
\end{enumerate}
\end{proposition}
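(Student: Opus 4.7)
My plan is to translate each of the three claims into a finite collection of bitstring equalities obtained from the explicit expressions in~\eqref{eq:xyi} and~\eqref{eq:Txi}, and then to resolve them using two structural facts. First, every bitstring in $A_n\cup B_n$ has a unique $\ell$-value (Lemma~\ref{lem:rot}), so any equality $\sigma^i(a)=\sigma^j(b)$ between two canonical representatives determines the residue $i-j$ modulo the period of the corresponding necklace. Second, distinct cycles of~$\cF_n$ partition $A_n\cup B_n$ into disjoint bitstring sets, so two $f$-edges of periodic paths can coincide only when their underlying plane trees $[t(\cdot)]$ are equal, at which point~\eqref{eq:Txi} reduces each matching to a concrete Dyck-word comparison. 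Lemma~\ref{lem:min-length} is used to rule out the small-period degeneracies associated with the pair $(s_n,s_n')$ excluded from~$G_n$.

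For part~(i), I would assume for contradiction that some $f$-edge of $\sigma^i(C(x,y))$ coincides with some $f$-edge of $\sigma^j(C(\hx,\hy))$. Such a shared edge forces the two corresponding cycles of~$\cF_n$ to be equal, so one of the four equalities $[x]=[\hx]$, $[x]=[\hy]$, $[y]=[\hx]$, $[y]=[\hy]$ must hold. Together with $[x]\neq[y]$ and $[\hx]\neq[\hy]$, the hypothesis $\{[x],[y]\}\neq\{[\hx],[\hy]\}$ permits at most one of these, and then at most two candidate pairs of $f$-edges remain. For each such pair, matching endpoints via~\eqref{eq:xyi} and~\eqref{eq:Txi} yields a Dyck-word identity that either implies the forbidden equality $\{[x],[y]\}=\{[\hx],[\hy]\}$ or forces $(x,y)=(s_n,s_n')$, both contradictions.

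For parts~(ii) and~(iii), the easy direction is a direct substitution: plugging $\hx=\rho^2(x)$ with $i=j+2$ (resp.\ $\hx=\rho^{-1}(y)$ with $i=j-1$) into~\eqref{eq:xyi} aligns the relevant $f$-edge of $\sigma^j(C(\hx,\hy))$ with an edge inside the reversed subpath of $\sigma^i(C(x,y))$. For the converse in~(ii), I would enumerate the four edges along $\sigma^i((x^1,\ldots,x^5))$ and test each one against $\sigma^j((\hx^0,\hx^1))$; matching Hamming weights eliminates most pairings, and for each survivor the $\ell$-shift formula of Proposition~\ref{prop:Fn}~(ii) fixes the rotation exponent while~\eqref{eq:Txi} forces $\hx=\rho^k(x)$ for a particular~$k$. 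Only $k=2$ is consistent with $\hx$ being a pullable Dyck word (beginning with~$110$); the remaining values produce a Dyck word beginning with a~$0$ or otherwise violating the pullable form. Part~(iii) is handled symmetrically by testing $\sigma^i((y^0,y^1))$ against the four edges of $\sigma^j((\hx^1,\ldots,\hx^5))$, with the unique surviving match pinning down $\hx=\rho^{-1}(y)$ and $i=j-1$.

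The main obstacle I anticipate is the bookkeeping in part~(i): nine candidate pairings of $f$-edges have to be examined, and each branches into subcases according to which plane trees among $[x]$, $[y]$, $[\hx]$, $[\hy]$ coincide. The technical heart is verifying, in every subcase, that any nontrivial bitstring identity is already strong enough to collapse $\{[x],[y]\}$ onto $\{[\hx],[\hy]\}$, so that the hypothesis of the proposition cleanly excludes all such coincidences.
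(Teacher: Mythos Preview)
Your proposal is correct and follows essentially the same approach as the paper's own proof: reduce coincidences of $f$-edges to equalities of the underlying plane trees via Proposition~\ref{prop:Fn}~(i), then pin down the remaining possibilities by comparing the explicit Dyck words in~\eqref{eq:Txi} and the $\ell$-values. The paper normalizes to $i=0$, and its execution is somewhat leaner than what you anticipate: rather than working through all nine $f$-edge pairings in~(i), it observes at once that $(x^5,x^6)\in B_n\times A_n$ while $\sigma^j(\hx^0,\hx^1)\in A_n\times B_n$, which (together with the diagonal pairings collapsing to $(x,y)=(\hx,\hy)$) leaves essentially nothing to check for compatibility; the real content lies in~(ii) and~(iii), where only the comparisons $\sigma^j(\hx^0)\in\{x^2,x^4\}$ and $y^0\in\{\sigma^j(\hx^0),\sigma^j(\hx^2),\sigma^j(\hx^4)\}$ survive the parity filter, and each is dispatched by a single-bit inspection of~\eqref{eq:Txi} exactly along the lines you sketch.
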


\begin{figure}
\includegraphics{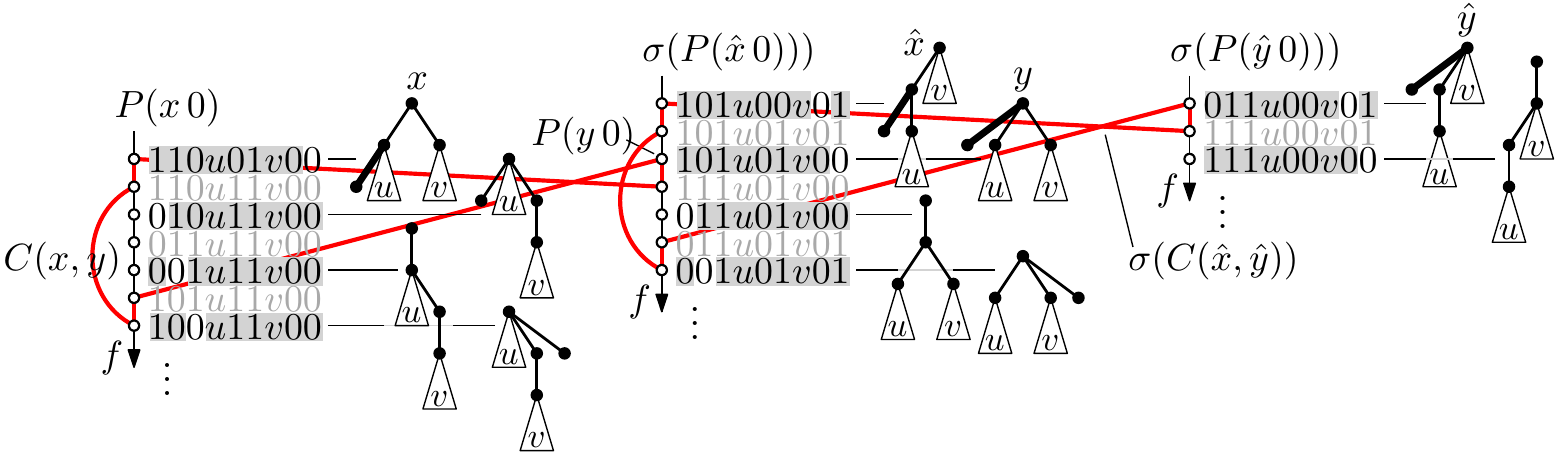}
\caption{Two nested 6-cycles $C(x,y)$ and $C(\hx,\hy)$.
The plane tree~$[\hy]$ is obtained from~$[x]$ by pulling the same edge twice in succession.
This edge is drawn fat in the figure.}
\label{fig:nested}
\end{figure}

Two nested gluing cycles as in case~(iii) of Proposition~\ref{prop:Cxy} can be interpreted as follows:
We start at the tree~$x$, pull an edge towards the root to reach the tree~$y=\pull(x)$, then perform an inverse tree rotation~$\hx=\rho^{-1}(y)$, which makes the previously pulled edge eligible to be pulled again, then pull this edge a second time, reaching the tree~$\hy=\pull(\hx)$.
Consequently, nested gluing cycles occur if and only if the same edge of the underlying plane trees is pulled twice in succession; see Figure~\ref{fig:nested}.

\begin{proof}
It suffices to prove the lemma for $i=0$ and arbitrary $j\geq 0$, so for the rest of the proof we assume that~$i=0$.
We consider the bitstrings $z^k:=f^k(z\,0)$ for all $z\in \{x,y,\hx,\hy\}$ and $k\geq 0$.

By Proposition~\ref{prop:Fn}~(i) and the assumptions $[x]\neq [y]$ and $[\hx]\neq[\hy]$, each of the 6-cycles $\neck{C(x,y)}$ and $\neck{C(\hx,\hy)}$ in~$N_n$ connects two distinct cycles of the cycle factor~$\cF_n$ with each other, and the $f$-edges of~$C(x,y)$ and~$C(\hx,\hy)$ all lie on cycles from the factor.
Consequently, by the assumption that $\{[x],[y]\}\neq\{[\hx],[\hy]\}$, it suffices to verify the following three claims about edges in~$M_n$:
(1)~If $[x]=[\hx]$, then the edge $(x^5,x^6)$ is distinct from~$\sigma^j(\hx^0,\hx^1)$, and the edge~$(x^0,x^1)$ is distinct from~$\sigma^j(\hx^5,\hx^6)$ for all $j\geq 0$.
(2)~If $[x]=[\hx]$, then the edge $\sigma^j(\hx^0,\hx^1)$ does not belong to the path $(x^1,\ldots,x^5)$ for any~$j\geq 0$, with the only possible exception occuring if $\sigma^{-2}(\hx^0,\hx^1)=(x^4,x^5)$ and~$\hx=\rho^2(x)$.
(3)~If $[y]=[\hx]$, then the edge $(y^0,y^1)$ does not belong to the path $\sigma^j(\hx^0,\ldots,\hx^6)$ for any~$j\geq 0$, with the only possible exception occuring if $(y^0,y^1)=\sigma^1(\hx^2,\hx^3)$ and~$\hx=\rho^{-1}(y)$.

We begin observing that $\sigma^j(z^k)\in A_n$ for even~$k$ and $\sigma^j(z^k)\in B_n$ for odd~$k$ and all~$j\geq 0$.
It follows that $(x^5,x^6)\in B_n\times A_n$ and $\sigma^j(\hx^0,\hx^1)\in A_n\times B_n$, which immediately implies~(1).
To prove~(2), we first show that $\sigma^j(\hx^0)\neq x^2$.
This follows from~\eqref{eq:Txi}, by observing that $t(\sigma^j(\hx^0))=t(\hx^0)$ and $t(x^2)$ differ in the second bit.
Note also that the condition $t(\sigma^j(\hx^0))=t(\hx^0)=t(x^4)=\rho^2(t(x^0))$ is equivalent to $\hx=\rho^2(x)$.
Moreover, from~\eqref{eq:xyi} we see that $\ell(\sigma^j(\hx^0))=-j$ and $\ell(x^4)=2$, so $\sigma^j(\hx^0)=x^4$ implies that $j=-2$.

To prove~(3), we first show that $y^0\notin\{\sigma^j(\hx^0),\sigma^j(\hx^4)\}$.
From~\eqref{eq:Txi} we see that $t(y^0)$ and $t(\sigma^j(\hx^0))=t(\hx^0)$ differ in the second and third bit, showing that $y^0$ is different from~$\sigma^j(\hx^0)$.
From~\eqref{eq:Txi} we also obtain that the root of $t(\sigma^j(\hx^4))=t(\hx^4)$ is a leaf, whereas the root of~$t(y^0)$ is not a leaf, proving that $y^0$ is different from~$\sigma^j(\hx^4)$.
Note also that the condition $t(y^0)=t(\sigma^j(\hx^2))=t(\hx^2)=\rho(t(\hx^0))$ is equivalent to $y=\rho(\hx)$.
Moreover, from~\eqref{eq:xyi} we see that $\ell(y^0)=0$ and $\ell(\sigma^j(\hx^2))=1-j$, so $y^0=\sigma^j(\hx^2)$ implies that $j=1$.

This completes the proof.
\end{proof}

\section{Translation to a spanning tree problem}
\label{sec:tree}

In this section we combine the ingredients from the previous two sections, and show how they translate Knuth's Gray code problem into the problem of finding a spanning tree~$\cT_n$ in a suitably defined auxiliary graph~$\cH_n$, following the ideas outlined in Section~\ref{sec:idea-gluing}.
The definitions of the graphs~$\cH_n$ and~$\cT_n$ are given in Sections~\ref{sec:Hn} and~\ref{sec:Tn} below, respectively.
Based on this, we describe how flip sequences are glued together inductively along the spanning tree~$\cT_n$ (Sections~\ref{sec:flip} and~\ref{sec:flip-T}).
This allows us to make a first attempt at proving Theorem~\ref{thm:star} (Section~\ref{sec:first}).
Unfortunately, this attempt does not give a complete proof yet, as we are unable to control the shift value of the flip sequences resulting from the gluing process; recall the discussion from Section~\ref{sec:idea-shift}.

\subsection{Definition of~$\cH_n$}
\label{sec:Hn}

For $n\geq 4$, we let~$\cH_n$ denote the directed arc-labeled multigraph defined as follows:
The node set of~$\cH_n$ is~$T_n$, i.e., all plane trees with $n$ edges.
Moreover, for each gluing pair~$(x,y)\in G_n$, there is an arc labeled~$(x,y)$ from the plane tree~$[x]$ to the plane tree~$[y]$ in~$\cH_n$.
Some pairs of nodes of~$\cH_n$ may be connected by multiple arcs oriented the same way (with different labels), such as $([1100110010],[1010110010])$ and $([1100101100],[1010101100])$.
Some pairs of nodes may be connected by multiple arcs oriented oppositely, such as $([11010100],[10110100])$ and $([11001010],[10101010])$.
There may also be loops in~$\cH_n$, such as $([11010010],[10110010])$.

Let $\cT$ be a simple subgraph of~$\cH_n$, i.e., $\cT$ has no loops and no multiple arcs, neither oriented the same way nor oppositely.
We let $G(\cT)$ be the set of all arc labels of~$\cT$, i.e., the set of all gluing pairs~$(x,y)\in G_n$ that give rise to the arcs in~$\cT$.
As $\cT$ is simple, we clearly have $[x]\neq[y]$, $[\hx]\neq[\hy]$, and $\{[x],[y]\}\neq \{[\hx],[\hy]\}$ for all $(x,y),(\hx,\hy)\in G(\cT)$.
We say that $G(\cT)$ is \emph{interleaving-free} or \emph{nesting-free}, respectively, if there are no two gluing pairs $(x,y),(\hx,\hy)\in G(\cT)$ such that the gluing cycles~$\sigma^i(C(x,y))$ and $\sigma^j(C(\hx,\hy))$ are nested or interleaved for any $i,j\geq 0$.

The next lemma provides a simple sufficient condition guaranteeing interleaving-freeness.

\begin{lemma}
\label{lem:heavy}
If for every gluing pair $(x,y)\in G(\cT)$, the root of the tree~$x$ is not a leaf, then $G(\cT)$ is interleaving-free.
\end{lemma}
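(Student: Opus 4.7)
The plan is to argue by contradiction using Proposition~\ref{prop:Cxy}(ii): that proposition forces a very rigid rotational relation between the two offending gluing pairs, and I show this relation is already ruled out by the hypothesis of the lemma. Suppose $G(\cT)$ is not interleaving-free, witnessed by $(x,y),(\hx,\hy)\in G(\cT)$ and integers $i,j\geq 0$. Since $\cT$ is simple, the premises $[x]\neq[y]$, $[\hx]\neq[\hy]$, and $\{[x],[y]\}\neq\{[\hx],[\hy]\}$ of Proposition~\ref{prop:Cxy} are met, and part~(ii) yields $\hx=\rho^2(x)$; the symmetric case $x=\rho^2(\hx)$ (obtained from swapping the roles of the two pairs in the definition of interleaving) will be handled in exactly the same way at the end.

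The main step is a short calculation of $\rho^2$ applied to a pullable tree. Writing $x = 1\,1\,0\,u\,0\,v$ with $u,v\in D$, the canonical Dyck decomposition of $x$ is $x = 1\cdot(10u)\cdot 0\cdot v$, so the rotation formula yields $\rho(x) = 10u1v0$. The canonical decomposition of $\rho(x)$ is $1\cdot\varepsilon\cdot 0\cdot(u1v0)$, so a second application gives $\hx = \rho^2(x) = 1\,u\,1\,v\,0\,0$. To read off the root structure of $\hx$, I compute its canonical decomposition $\hx = 1\,\hat u'\,0\,\hat v'$: starting the excess at $1$ after the leading '1', it stays at $1$ through~$u$, jumps to $2$ at the middle '1' and through~$v$, and only drops to $0$ at the very last bit. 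Hence the matching '0' of the outer '1' is the last bit, giving $\hat u' = u1v0$ and $\hat v' = \varepsilon$. By the Dyck-word-to-rooted-tree correspondence, $\hat v' = \varepsilon$ means the root of $\hx$ has exactly one child, i.e., is a leaf.

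This contradicts the hypothesis of the lemma applied to $(\hx,\hy)\in G(\cT)$, which asserts that the root of $\hx$ is not a leaf. The symmetric case $x=\rho^2(\hx)$ identically forces the root of $x$ to be a leaf, contradicting the hypothesis on $(x,y)$. I do not anticipate a serious obstacle; the argument is a structural reduction via Proposition~\ref{prop:Cxy}(ii) combined with a one-line computation. The only point requiring care is reconciling the pullable form $\hx = 1\,1\,0\,\hat u\,0\,\hat v$ required by $(\hx,\hy)\in G_n$ with the canonical decomposition $\hx = 1\,\hat u'\,0\,\hat v'$, but these are related by $\hat u' = 10\hat u$ and $\hat v' = \hat v$, so the conclusion $\hat v' = \varepsilon$ is literally the root-is-leaf condition we wanted to rule out.
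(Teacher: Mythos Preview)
Your argument is correct and follows the same route as the paper: assume interleaving, invoke Proposition~\ref{prop:Cxy}(ii) to force $\hx=\rho^2(x)$, and observe that $\rho^2(x)=1\,u\,1\,v\,0\,0$ has a leaf root, contradicting the hypothesis on $(\hx,\hy)$. The paper simply cites~\eqref{eq:Txi} (where $t(x^4)=\rho^2(t(x^0))=1\,u\,1\,v\,0\,0$ is already recorded) instead of redoing the two-step rotation by hand, and it does not treat a separate ``symmetric case'' since the definition of interleaving-free already quantifies over ordered pairs of gluing pairs; otherwise the proofs coincide.
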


\begin{proof}
Suppose there are two gluing pairs $(x,y),(\hx,\hy)\in G(\cT)$ such that the gluing cycles $\sigma^i(C(x,y))$ and $\sigma^j(C(\hx,\hy))$ are interleaved for some $i,j\geq 0$.
By Proposition~\ref{prop:Cxy}~(ii), this implies $i=j+2$ and $\hx=\rho^2(x)$.
However, note that the root of $\rho^2(x)$ is a leaf (recall~\eqref{eq:Txi}), whereas the root of~$\hx$ is not a leaf by the assumption of the lemma, so this is a contradiction.
\end{proof}

\subsection{Pullable/pushable leaves}

The following definitions are illustrated in Figure~\ref{fig:pullable}.
Given a plane tree~$T$ and two vertices $a,b$ of~$T$, we let $d(a,b)$ denote the distance of~$a$ and~$b$ in~$T$, and we let $p^i(a,b)$, $i=0,1,\ldots,d(a,b)$, be the $i$th vertex on the path from~$a$ to~$b$ in~$T$.
In particular, we have $p^0(a,b)=a$ and $p^{d(a,b)}(a,b)=b$.

\begin{figure}[b!]
\includegraphics{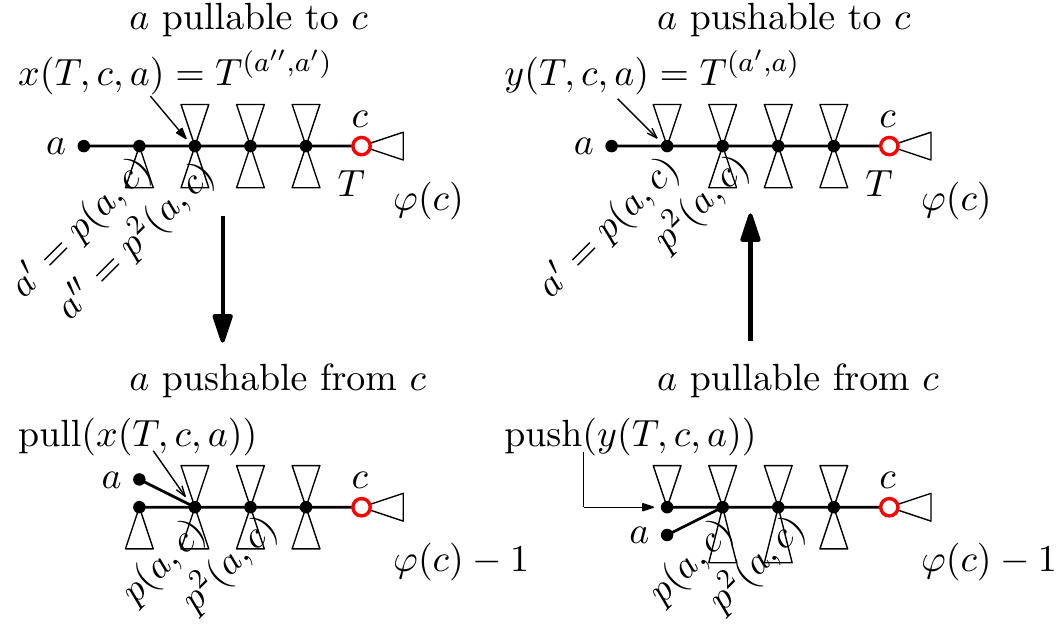}
\caption{Definition of pullable and pushable leaves.}
\label{fig:pullable}
\end{figure}

Consider a vertex~$c$ and a leaf~$a$ of~$T$ with $d(a,c)\geq 2$.
We say that $a$ is \emph{pullable to~$c$}, if $p^1(a,c)$ has no neighbors between~$p^2(a,c)$ and~$a$ in its ccw ordering of neighbors.
We say that $a$ is \emph{pushable to~$c$}, if $p^1(a,c)$ has no neighbors between~$a$ and~$p^2(a,c)$ in its ccw ordering of neighbors.

Consider a vertex~$c$ and a leaf~$a$ of~$T$ with $d(a,c)\geq 1$.
We say that $a$ is \emph{pullable from~$c$}, if $d(a,c)\geq 2$ and $p^1(a,c)$ has at least one neighbor between~$p^2(a,c)$ and~$a$ in its ccw ordering of neighbors, of if $d(a,c)=1$ and $c$ is not a leaf.
We say that $a$ is \emph{pushable from~$c$}, if $d(a,c)\geq 2$ and $p^1(a,c)$ has at least one neighbor between $a$ and~$p^2(a,c)$ in its ccw ordering of neighbors, or if $d(a,c)=1$ and $c$ is not a leaf.

For any odd $n\geq 5$ we define the \emph{dumbbells} $d_n:=1(10)^{(n-1)/2}0(10)^{(n-1)/2}\in D_n$ and $d_n':=\rho^{-2}(d_n):=101(10)^{(n-1)/2}0(10)^{(n-3)/2}\in D_n$.
Each dumbbell has two centroids of degree~$(n+1)/2$ each, and all remaining vertices are leaves.

Given a plane tree~$T$ with a unique centroid~$c$, we refer to every $c$-subtree of~$T$ as \emph{active}.
If $T$ has two centroids~$c,c'$, we refer to every $c$-subtree of~$T$ except the one containing~$c'$, and to every $c'$-subtree of~$T$ except the one containing~$c$ as \emph{active}.
For $n\geq 4$, note that if $T\neq [s_n]$ and $T\neq [d_n]$ for odd~$n$, then $T$ has a centroid with an active subtree that is not a single edge.

The following two lemmas describe certain pull/push operations on plane trees that preserve the centroid(s), and that change the tree potential by~$\pm 1$.

\begin{lemma}
\label{lem:pot-pull}
Let $c$ be a centroid of a plane tree~$T$, let $a$ be a leaf of~$T$ that is pullable to~$c$ and that belongs to an active $c$-subtree unless $n\geq 5$ is odd and $T=[d_n]$.
Then the rooted tree~$x:=x(T,c,a):=T^{(a'',a')}$, where $a':=p^1(a,c)$ and $a'':=p^2(a,c)$, is a pullable tree, the rooted tree $y:=\pull(x)$ satisfies $\varphi(y)=\varphi(x)-1$, and the leaf~$a$ is pushable from~$c$ in~$[y]$.
Moreover, the centroid(s) of~$x$ and~$y$ are identical and contained in the right subtrees of~$x$ and~$y$, unless $n\geq 5$ is odd and $x=d_n$, in which case $x$ has two centroids, namely the roots of its left and right subtree, and the root of the right subtree is the unique centroid of~$y$.
\end{lemma}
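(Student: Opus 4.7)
The plan is to verify the three conclusions of the lemma --- that $x$ is a pullable tree, that $a$ is pushable from $c$ in $[y]$, and that $\varphi(y)=\varphi(x)-1$ together with the claimed centroid structure --- by unpacking the definitions, invoking Lemma~\ref{lem:potential}, and handling the dumbbell exception by direct calculation. For the structural part, rooting $T$ at $a''$ with $a'$ as leftmost child yields $x=T^{(a'',a')}$; pullability of $a$ to $c$ says that $a'$ has no neighbors between $a''$ and $a$ in its ccw order, so $a$ is the leftmost child of $a'$, and since $a$ is a leaf we read off $x=1\,1\,0\,u\,0\,v$ and $y=\pull(x)=1\,0\,1\,u\,0\,v$. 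Pushability of $a$ from $c$ in $[y]$ splits on whether $c=a''$ (in which case $d(a,c)=1$ in $[y]$ and $c$ is not a leaf since $a'$ remains its neighbor) or $c\neq a''$ (in which case the neighbor $a'$ of $a''$ lies in the ccw order at $a''$ strictly between the new leftmost child $a$ and the next vertex on the $[y]$-path from $a$ to $c$).

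The second step is to place every centroid of $T$ in the right subtree of $x$ outside the dumbbell exception, so that Lemma~\ref{lem:potential} applies. The pivotal observation is that $a$ lies in the left subtree of $x$ (the subtree rooted at $a'$): if a centroid $c$ were also there, the path from $a$ to $c$ would stay inside that subtree, contradicting $p^2(a,c)=a''$. Hence $c$ lies in the right subtree, where the root $a''$ also sits. For the two-centroid case of Lemma~\ref{lem:centroid}, the assumption that $a$ belongs to an active $c$-subtree rules out $c'=a'$, since otherwise the $c$-subtree at $a'$ would contain $c'$ and be inactive; thus the second centroid $c'$, being adjacent to $c$, also lies in the right subtree. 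Lemma~\ref{lem:potential} then gives $\varphi(y)=\varphi(x)-1$ and preserves each such centroid. A case check on the remaining vertices --- with $+1$ applied to every vertex in the left subtree (including $a'$), $-1$ applied to every vertex in the right subtree (including $a''$), and $\varphi_y(a)=\varphi_y(a'')+n-1$ for the moved leaf --- shows that no other vertex attains the new minimum potential, so the centroid set of $y$ agrees with that of $x$.

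The remaining case is the dumbbell exception $T=[d_n]$ with $n\geq 5$ odd. Here the two centroids of $T$ are adjacent, and the constraint $d(a,c)\geq 2$ in the pullability definition forces $c$ to be the centroid opposite the one adjacent to $a$, so $x=T^{(a'',a')}=d_n$, whose two centroids are the roots of its left and right subtrees. A direct potential calculation in $y=\pull(x)$ --- where the root of the right subtree gains $a$ as an extra leaf-child while the root of the left subtree loses it --- shows that the root of the right subtree is the unique centroid of $y$ and that $\varphi(y)=\varphi(x)-1$ still holds. The step I expect to be hardest is the second one: jointly handling the ccw structure, the Dyck word decomposition of $x$, and the activeness hypothesis in the two-centroid case, so as to be sure that the argument genuinely forces $c'\neq a'$ and that the potentials of $a'$, $a''$, and the moved leaf $a$ --- which are not captured by the uniform $\pm 1$ rule of Lemma~\ref{lem:potential} --- do not introduce a spurious centroid in $y$.
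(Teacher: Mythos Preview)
Your proposal is correct and takes the same route as the paper: unpack the definitions of pullable/pushable leaves, and invoke Lemma~\ref{lem:potential} once the centroid(s) have been placed in the right subtree. The paper's own proof is essentially a one-liner (``follows immediately from the definitions and Lemma~\ref{lem:potential}''), so your three-step plan is just a detailed unfolding of what the authors leave as routine; your care about spurious centroids in~$y$ and the dumbbell computation go beyond what the paper spells out but are entirely in the same spirit.

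The one point you are missing, and the only thing the paper's proof actually writes out, is the verification that $x\neq s_n$. Recall that the definition of a pullable tree in~\eqref{eq:gluing} explicitly excludes the pair $(s_n,s_n')$, so ``$x$ is a pullable tree'' is not fully established by exhibiting the decomposition $x=1\,1\,0\,u\,0\,v$. The argument is short: if $T=[s_n]$, then the unique centroid~$c$ is the center of the star and every leaf is at distance~$1$ from~$c$, so no leaf is pullable to~$c$; hence the hypotheses of the lemma cannot be satisfied with $T=[s_n]$, and $x\neq s_n$. Add this sentence and your plan is complete.
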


\begin{proof}
The statements follow immediately from the definitions given before the lemma, and by Lemma~\ref{lem:potential}.
To see that $x\neq s_n$ note that the star $[s_n]$ has a unique centroid~$c$ and no leaves that are pullable to~$c$.
\end{proof}

\begin{lemma}
\label{lem:pot-push}
Let $c$ be a centroid of a plane tree~$T$, let $a$ be a thick leaf of~$T$ that is pushable to~$c$ and that belongs to an active $c$-subtree unless $n\geq 5$ is odd and $T=[d_n']$.
Then the rooted tree~$y:=y(T,c,a):=T^{(a',a)}$, where $a':=p^1(a,c)$, is a pushable tree, the rooted tree $x:=\push(y)$ satisfies $\varphi(x)=\varphi(y)-1$, and the leaf~$a$ is pullable from~$c$ in~$[y]$.
Moreover, the centroid(s) of~$y$ and~$x$ are identical and contained in the left subtrees of~$y$ and~$x$, unless $n\geq 5$ is odd and $y=d_n'$, in which case $y$ has two centroids, namely the roots of its left and right subtree, and the root of the left subtree is the unique centroid of~$x$.
\end{lemma}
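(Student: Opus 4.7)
My plan is to prove Lemma~\ref{lem:pot-push} in close parallel to Lemma~\ref{lem:pot-pull}, using Lemma~\ref{lem:potential} as the main tool; the only substantive changes are handling the push direction, the left subtree (rather than the right), and the exceptional tree~$[d_n']$ (rather than~$[d_n]$).

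First I would verify that $y := T^{(a',a)}$ is a pushable Dyck word $1\,0\,1\,u\,0\,v$. The leaf~$a$ as the leftmost child of~$a'$ contributes the prefix~$10$; the pushability of~$a$ to~$c$ ensures that, ccw from~$a$ at~$a'$, the next neighbor is $p^2(a,c)$, so this vertex becomes the second child of~$a'$ in~$y$ and contributes the block $1\,u\,0$; and the remaining children of~$a'$ encode~$v$. Since $a$ is thick, $a'$ has degree at least~$3$, so the root of~$y$ has at least~$3$ children, which already excludes $y=s_n'$ (whose root has degree exactly~$2$) and hence excludes $(x,y)=(s_n,s_n')$. The same degree inequality immediately yields the pullability of~$a$ from~$c$ in $[y]=T$: beyond $a$ and $p^2(a,c)$, the vertex $a'$ has at least one further neighbor, and pushability forces this extra neighbor to lie ccw between $p^2(a,c)$ and~$a$, which is the condition for pullability from~$c$.

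To apply Lemma~\ref{lem:potential} I then need every centroid of $T=[y]$ to lie in the left subtree of~$y$, that is, in the subtree rooted at $p^2(a,c)$ under the $T^{(a',a)}$-rooting. The centroid~$c$ is there because $p^2(a,c)$ lies on the path from~$a$ to~$c$; if $T$ has a second centroid $c'$, necessarily adjacent to~$c$, the hypothesis that $a$ belongs to an active $c$-subtree forces $c'$ to sit past~$c$ from~$a$'s viewpoint and so also inside the left subtree. The exception $T=[d_n']$ is precisely the configuration where one of the two centroids is the root~$a'$ itself (the ``root of the right subtree''), which is not in the left subtree; in that case only $p^2(a,c)$ transfers over to~$x$ as its unique centroid. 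Lemma~\ref{lem:potential} then gives $\varphi(x)=\varphi(y)-1$ together with the transfer of every centroid of~$y$ in the left subtree to a centroid of~$x$.

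I expect the main obstacle to be ruling out \emph{new} centroids appearing in~$x$ that were not centroids of~$y$, since Lemma~\ref{lem:potential} only provides a one-sided transfer. I would handle this by a direct computation of how the push changes potentials: the only distances that move are those to the shifted leaf~$a$, which shrink by~$1$ for every vertex in the left subtree and grow by~$1$ for every other vertex (the leaf~$a$ itself cannot be a centroid for $n\geq 4$). Combined with $\varphi(x)=\varphi(y)-1$, this forces any centroid of~$x$ to have been a centroid of~$y$ already, and pins down the centroid sets exactly as claimed, including the collapse from $\{a',p^2(a,c)\}$ to $\{p^2(a,c)\}$ in the dumbbell case.
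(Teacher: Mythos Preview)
Your proposal is correct and follows essentially the same approach as the paper, which simply states that the proof is analogous to Lemma~\ref{lem:pot-pull} via Lemma~\ref{lem:potential}, and rules out $y=s_n'$ by noting that the leaf pushed in~$s_n'$ is thin whereas~$a$ is thick. Your argument for excluding~$s_n'$ via the root degree is equivalent, and the extra care you take to rule out new centroids of~$x$ (by tracking the $\pm 1$ potential change on each side) makes explicit what the paper leaves implicit under ``follows immediately from the definitions.''
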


\begin{proof}
The proof is analogous to Lemma~\ref{lem:pot-pull}.
To see that $y\neq s_n'$ note that $a$ is assumed to be thick, unlike the leaf we would push in~$s_n'$ to obtain~$s_n$.
\end{proof}

\subsection{Definition of~$\cT_n$}
\label{sec:Tn}

For $n\geq 4$, we define a subgraph~$\cT_n$ of~$\cH_n$ as follows:
For every plane tree~$T\in T_n$ with $T\neq [s_n]$ and $T\neq [d_n]$ for odd~$n$, we fix a vertex~$c$ that is a centroid of~$T$ and that has at least one active $c$-subtree that is not a single edge.
The leftmost leaf of every such $c$-subtree is pullable to~$c$, and we fix one such leaf~$a$.
If $n$ is odd and $T=[d_n]$, we let~$c$ be one of its centroids, which has exactly one $c$-subtree that is not a single edge, namely the tree~$s_{(n+1)/2}$.
The leftmost leaf~$a$ of this subtree is pullable to~$c$.
In both cases, let $x:=x(T,c,a)$ be the corresponding pullable rooted tree as defined in Lemma~\ref{lem:pot-pull}, and define $y:=\pull(x)$, yielding the gluing pair $(x,y)\in G_n$.
We let $\cT_n$ be the spanning subgraph of~$\cH_n$ that is given by the union of arcs $([x],[y])$ labeled~$(x,y)$ for all gluing pairs~$(x,y)$ obtained in this way.
In the above definition, ties in the case of two centroids or in the case of multiple $c$-subtrees can be broken arbitrarily.

The next lemma shows that the graph~$\cT_n$ defined above is indeed a spanning tree of~$\cH_n$, and moreover the potential of plane trees along every arc of~$\cT_n$ changes by~$-1$.
For any arc~$(T,T')$, we say that~$T'$ is an \emph{out-neighbor} of~$T$, and we say that~$T$ is an \emph{in-neighbor} of~$T'$.

\begin{lemma}
\label{lem:Tn}
For any $n\geq 4$, the graph~$\cT_n$ is a spanning tree of~$\cH_n$, and for every arc~$(T,T')$ in~$\cT_n$ we have $\varphi(T')=\varphi(T)-1$.
Every plane tree~$T$ other than the star~$[s_n]$ has exactly one neighbor~$T'$ in~$\cT_n$ with $\varphi(T')=\varphi(T)-1$, which is an out-neighbor.
Furthermore, $G(\cT_n)$ is interleaving-free.
\end{lemma}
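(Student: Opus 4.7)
\emph{Plan.} By construction every plane tree $T\in T_n$ other than $[s_n]$ contributes exactly one out-arc to $\cT_n$ (obtained from the chosen centroid $c$ and leaf $a$), while $[s_n]$ contributes none, so $\cT_n$ has exactly $|T_n|-1$ arcs. The key property driving all four assertions is the potential-drop: each such out-arc comes from an invocation of Lemma~\ref{lem:pot-pull}, which produces a gluing pair $(x,y)$ with $y=\pull(x)$ and $\varphi(y)=\varphi(x)-1$. Since the potential is an invariant of the underlying plane tree, this immediately yields $\varphi(T')=\varphi(T)-1$ for every arc $(T,T')$ of $\cT_n$, and the remaining assertions reduce to short bookkeeping arguments built on this property.

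To see that $\cT_n$ is a spanning tree, I first verify simplicity: a loop would require $\varphi(T)=\varphi(T)-1$; a pair of oppositely directed arcs $T\to T'$ and $T'\to T$ would force $\varphi(T')=\varphi(T')-2$; and same-direction parallel arcs cannot occur because each node has at most one out-arc. For acyclicity, suppose toward a contradiction that the underlying undirected graph contains a cycle $T_0,T_1,\ldots,T_{k-1},T_0$, and pick $T_0$ to have minimum potential on it. An out-arc from $T_0$ would lead to strictly smaller potential, so both cycle-arcs at $T_0$ must be incoming; in particular $T_1\to T_0$ and $T_{k-1}\to T_0$, which means $T_1$ and $T_{k-1}$ have already used their unique out-arcs. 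Iterating, the arc between $T_i$ and $T_{i+1}$ must be $T_{i+1}\to T_i$ for every $i=1,\ldots,k-2$ (since $T_i$ has already spent its out-arc on $T_{i-1}$), forcing $T_{k-1}$ to possess two out-arcs, namely to $T_{k-2}$ and to $T_0$, a contradiction. Hence $\cT_n$ is a simple acyclic subgraph of $\cH_n$ with $|T_n|-1$ arcs on $|T_n|$ nodes, i.e., a spanning tree. The ``unique lower-potential neighbor'' claim is then immediate: any in-neighbor $T''$ of $T$ satisfies $\varphi(T'')=\varphi(T)+1$ by the potential-drop property, so the out-neighbor is the only neighbor of $T$ with strictly smaller potential.

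For the interleaving-freeness of $G(\cT_n)$ I invoke Lemma~\ref{lem:heavy}, which reduces the task to showing that the root of $x$ is not a leaf for every $(x,y)\in G(\cT_n)$. The root of $x=T^{(a'',a')}$ is $a''=p^2(a,c)$, and since the chosen $c$-subtree containing $a$ has at least two edges (including in the exceptional case $T=[d_n]$, where the subtree $s_{(n+1)/2}$ has $(n+1)/2\geq 3$ edges), we have $d(a,c)\geq 2$. If $d(a,c)=2$ then $a''=c$, and for $n\geq 4$ the centroid cannot be a leaf of $T$, since a leaf centroid would leave a single subtree of $n$ vertices after its removal, violating the centroid property. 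If $d(a,c)\geq 3$ then $a''$ has the two distinct neighbors $a'$ and $p^3(a,c)$ in $T$. Either way $a''$ has at least two neighbors in $T$, hence at least two children as the root of $x$, so it is not a leaf, and Lemma~\ref{lem:heavy} applies. I expect the main obstacle to be the acyclicity argument: although short, it requires careful simultaneous bookkeeping of the potential-drop property and the ``unique out-arc per node'' property to propagate arc directions around the cycle and extract a contradiction.
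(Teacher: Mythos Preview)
Your proof is correct and follows essentially the same strategy as the paper: both establish the potential-drop $\varphi(T')=\varphi(T)-1$ via Lemma~\ref{lem:pot-pull}, deduce the spanning-tree property from it, and then verify the hypothesis of Lemma~\ref{lem:heavy} to obtain interleaving-freeness. The only differences are tactical: the paper argues connectivity by following out-arcs down to the global potential-minimum~$[s_n]$ (rather than counting edges), obtains acyclicity in one line from the fact that a maximum-potential node on a putative cycle would have two lower-potential neighbors (rather than your direction-propagation from a minimum), and for Lemma~\ref{lem:heavy} simply quotes from Lemma~\ref{lem:pot-pull} that a centroid lies in the right subtree~$v$ of~$x$, so $v\neq\varepsilon$ and the root of~$x$ is not a leaf (rather than your explicit case analysis on $d(a,c)$).
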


\begin{proof}
Consider the gluing pair $(x,y)\in G(\cT_n)$ added for the plane tree~$T$ with $T=[x]$.
By Lemma~\ref{lem:pot-pull} we have $\varphi(y)=\varphi(x)-1$, i.e., the potential of the trees changes by~$-1$ along every arc of~$\cT_n$.
It follows that in~$\cT_n$, every plane tree~$T$ other than the star~$[s_n]$ has exactly one neighbor~$T'$ with $\varphi(T')=\varphi(T)-1$, which is an out-neighbor.
Consequently, $\cT_n$ has no cycles, regardless of the orientation of arcs along the cycle (in particular, there are no loops).
As from every plane tree~$T\in T_n$ other than~$[s_n]$, we can reach a tree~$T'$ with $\varphi(T')=\varphi(T)-1$, there is a directed path from~$T$ to the star~$[s_n]$, which is the unique plane tree with minimum potential~$n$.
We showed that $\cT_n$ does not contain cycles and is connected, i.e., it is a spanning tree.
By Lemma~\ref{lem:pot-pull}, for any gluing pair $(x,y)\in G(\cT_n)$ the right subtree of~$x$ contains a centroid of~$x$.
As a centroid is never a leaf, the right subtree of~$x$ contains edges, i.e., the root of~$x$ is not a leaf, so we may apply Lemma~\ref{lem:heavy} to conclude that $G(\cT_n)$ is interleaving-free.
\end{proof}

\subsection{Basic operations on flip sequences}
\label{sec:flip}

We now describe some basic operations on flip sequences that will be used heavily in the next section when gluing flip sequences together.

Consider a periodic path $P=(x_1,\ldots,x_k)$ in the middle levels graph~$M_n$.
We say that a sequence of integers $\alpha=(a_1,\ldots,a_k)$ is a \emph{flip sequence} for~$P$, if $a_i$ is the position in which $x_{i+1}$ differs from $x_i$ for all $i=1,\ldots,k-1$, and the vertex~$x_{k+1}$ obtained from~$x_k$ by flipping the bit at position~$a_k$ satisfies $\neck{x_{k+1}}=\neck{x_1}$.
There is unique integer~$\lambda$ modulo~$2n+1$ given by the relation $x_1=\sigma^\lambda(x_{k+1})$.
We define $\lambda(\alpha):=\lambda$, and we refer to this quantity as the \emph{shift} of $\alpha$.
In words, the parameter $\lambda$ describes by how much the necklace representatives get rotated to the right when traversing the periodic path once.

We also define
\begin{subequations}
\begin{equation}
\label{eq:rev}
\begin{split}
\rev(P)&:=\Big(x_1,\sigma^{\lambda(\alpha)}\big((x_k,x_{k-1},\ldots,x_2)\big)\Big), \\
\rev(\alpha)&:=(a_k,a_{k-1},\ldots,a_1)-\lambda(\alpha),
\end{split}
\end{equation}
where indices are considered modulo~$2n+1$, as always.
Note that $\rev(\alpha)$ is a flip sequence for the periodic path~$\rev(P)$ satisfying
\begin{equation}
\label{eq:lambda-rev}
\lambda(\rev(\alpha))=-\lambda(\alpha).
\end{equation}
\end{subequations}

Given $P=(x_1,\ldots,x_k)$ and $\alpha=(a_1,\ldots,a_k)$ as before, we define
\begin{subequations}
\begin{equation}
\label{eq:mov}
\begin{split}
\mov(P)&:=\big(x_2,\ldots,x_k,\sigma^{-\lambda(\alpha)}(x_1)\big), \\
\mov(\alpha)&:=\big(a_2,\ldots,a_k,a_1+\lambda(\alpha)\big).
\end{split}
\end{equation}
Note that $\mov(\alpha)$ is a flip sequence for the periodic path~$\mov(P)$ satisfying
\begin{equation}
\label{eq:lambda-mov}
\lambda(\mov(\alpha))=\lambda(\alpha),
\end{equation}
\end{subequations}
which means that the shift is independent of the choice of the starting vertex along the path.
Similarly, for any integer~$i$ we have that $\alpha+i$ is a flip sequence for the periodic path~$\sigma^{-i}(P)$ satisfying
\begin{equation}
\label{eq:lambda-plus}
\lambda(\alpha+i)=\lambda(\alpha).
\end{equation}

For example, the periodic path $P=(1010100,1110100,0110100,0110101)$ has the flip sequence $\alpha=(2,1,7,2)$ with $\lambda(\alpha)=2$, and the periodic path $\rev(P)=(1010100,1010101,1010001,1010011)$ has the flip sequence $\rev(\alpha)=(7,5,6,7)$ with shift $\lambda(\rev(\alpha))=-2$.
Moreover, $\mov(\alpha)=(1,7,2,4)$ is a flip sequence for the periodic path $\mov(P)=(1110100,0110100,0110101,0010101)$ with $\lambda(\mov(\alpha))=2$, and $\alpha+1=(3,2,1,3)$ is a flip sequence for the periodic path $\sigma^{-1}(P)=(0101010,0111010,0011010,1011010)$ with $\lambda(\alpha+1)=2$.

\subsection{Flip sequences for subtrees of $\cH_n$}
\label{sec:flip-T}

Using the notation introduced in the previous section, we now describe how to glue flip sequences of periodic paths together inductively along subtrees of~$\cH_n$.
Ultimately, this will be done for the entire spanning tree~$\cT_n$.
The key problem in this gluing process is to keep track of the shift value of the flip sequences resulting after each step.

For any $x\in A_n\cup B_n$, with $\kappa(x)$ defined in~\eqref{eq:kx}, we let $\alpha(x)$ be the sequence of positions in which $f^{i+1}(x)$ differs from $f^i(x)$ for all $i=0,\ldots,\kappa(x)-1$.
Clearly, $\alpha(x)$ is a flip sequence for the periodic path~$P(x)$ defined in~\eqref{eq:Px}.
By Proposition~\ref{prop:Fn}~(ii), we have
\begin{equation}
\label{eq:lambda-alpha-x}
\lambda(\alpha(x))=\lambda(t(x)).
\end{equation}

Let $\cT$ be any subtree of~$\cH_n$ such that $G:=G(\cT)$ is interleaving-free.
We define the set of necklaces $N(\cT):=\bigcup_{[x]\in\cT} \neck{P(x\,0)}$.
By Proposition~\ref{prop:Fn}~(i), this is the set of all necklaces visited by cycles $\neck{P(x\,0)}$ in~$N_n$ for which the plane tree~$[x]$ belongs to~$\cT$.
In the following, for any~$z\in N(\cT)$ and any~$x\in z$ we define two periodic paths $\cP_G(x)=\{P,P'\}$ with the same starting vertex~$x$ in the middle levels graph~$M_n$ and flip sequences~$\alpha(P)$ and~$\alpha(P')$ for these two paths such that $P'=\rev(P)$ and $\alpha(P')=\rev(\alpha(P))$.
Moreover, $\neck{P}$ and~$\neck{P'}$ will be oppositely oriented cycles in the necklace graph~$N_n$ with vertex set~$N(\cT)$.
These definitions proceed inductively as follows:

\textbf{Base case:}
If $\cT=[x]$ is an isolated node, then we have $G(\cT)=\emptyset$.
For all~$i,j\geq 0$ we define $y:=\sigma^j(f^i(x\,0))$.
Note that $\alpha(y)$ is a flip sequence for $P(y)$, and so we may define
\begin{equation*}
\cP_\emptyset(y):=\{P(y),\rev(P(y))\}, \quad \alpha(P(y)):=\alpha(y), \quad \alpha(\rev(P(y))):=\rev(\alpha(y)),
\end{equation*}
with reversals as defined in~\eqref{eq:rev}.

\textbf{Induction step:}
For the induction step, we assume that $\cT$ has at least two nodes.
Consider all gluing pairs $(x,y),(\hx,\hy)\in G(\cT)$ for which $\sigma^i(C(x,y))$ and $\sigma^j(C(\hx,\hy))$ are nested for some $i,j\geq 0$, and call the arc of~$\cT$ labeled~$(\hx,\hy)$ \emph{bad}.
By Proposition~\ref{prop:Cxy}~(iii), this is only possible if $i=j-1$ and $\hx=\rho^{-1}(y)$, which implies that the union of all bad arcs is a set of directed paths in~$\cT$ and a proper subset of all of its arcs.
In particular, one of the arcs is not bad, i.e., there is a gluing pair $(\hx,\hy)\in G(\cT)$ satisyfing the following property~(*): $\sigma^i(C(x,y))$ and $\sigma^j(C(\hx,\hy))$ are not nested for any $(x,y)\in G(\cT)$ and $i,j\geq 0$.
Consider the subtrees~$\cT_1$ and~$\cT_2$ obtained by removing the arc $([\hx],[\hy])$ from~$\cT$, and consider the sets of gluing pairs~$G_1:=G(\cT_1)$ and~$G_2:=G(\cT_2)$.
By Proposition~\ref{prop:Cxy}~(i), by induction, and by property~(*) and the assumption that $G(\cT)$ is interleaving-free, there is a periodic path~$P_1=(x_1,\ldots,x_k)\in \cP_{G_1}(\hx\,0)$ that satisfies $(x_1,\ldots,x_7)=(\hx^0,\ldots,\hx^6)$, and a periodic path~$P_2=(y_1,\ldots,y_l)\in \cP_{G_2}(\hy\,0)$ that satisfies $(y_1,y_2)=(\hy^0,\hy^1)$.
Moreover, there are corresponding flip sequences $\alpha(P_1)=:\alpha_1=(a_1,\ldots,a_k)$ and $\alpha(P_2)=:\alpha_2=(b_1,\ldots,b_l)$.
We then define the periodic path
\begin{equation}
\label{eq:P12}
P_1\bowtie P_2:=\Big(x_1,y_2,y_3,\ldots,y_l,\sigma^{-\lambda(\alpha(P_2))}\big((y_1,x_6,x_5,x_4,x_3,x_2,x_7,x_8,\ldots,x_k)\big)\Big)
\end{equation}
in the middle levels graph~$M_n$ (cf.~\eqref{eq:Pxy}).
Together, the $2n+1$ periodic paths $\bigcup_{i\geq 0} \sigma(P_1\bowtie P_2)$ visit all vertices of $\bigcup_{i\geq 0}\sigma^i\big(P_1\cup P_2\big)$.
Moreover, considering the decomposition $\hx=1\,u\,0\,v$ with $u,v\in D$, we define
\begin{equation}
\label{eq:alpha-bowtie}
\alpha_1\bowtie \alpha_2:=\Big(3,b_2,b_3,\ldots,b_l,\big((|u|+4,a_5,a_4,a_3,a_2,2,a_7,a_8,\ldots,a_k)\big)+\lambda(\alpha(P_2))\Big).
\end{equation}
As $\alpha_1\bowtie \alpha_2$ is a flip sequence for the periodic path~$P_1\bowtie P_2$ by~\eqref{eq:alpha-Cxy} and~\eqref{eq:lambda-plus}, we may define
\begin{equation}
\label{eq:P-ind}
\begin{split}
P'&:=\mov^j(\sigma^i(P_1\bowtie P_2)), \quad \alpha':=\mov^j(\alpha_1\bowtie \alpha_2-i), \\
\cP_G(y)&:=\{P',\rev(P')\}, \quad \alpha(P'):=\alpha', \quad \alpha(\rev(P')):=\rev(\alpha'),
\end{split}
\end{equation}
where $y$ is the first vertex of the path~$\mov^j(\sigma^i(P_1\bowtie P_2))$, for all $i,j\geq 0$.
By induction, the sequence of necklaces $\neck{P_i}$, $i\in\{1,2\}$, is a cycle in the necklace graph~$N_n$ with vertex set $N(\cT_i)$.
Consequently, $\neck{P'}$ as defined in~\eqref{eq:P-ind} is a cycle with vertex set $N(\cT_1)\cup N(\cT_2)=N(\cT)$, as desired.

Observe from~\eqref{eq:lambda-mov}, \eqref{eq:lambda-plus}, \eqref{eq:alpha-bowtie} and~\eqref{eq:P-ind} that
\begin{equation*}
\lambda(\alpha(P'))=\lambda(\alpha(P_1))+\lambda(\alpha(P_2)).
\end{equation*}
Unrolling this inductive relation using Proposition~\ref{prop:Fn}~(i)+(iii), \eqref{eq:lambda-rev}, and~\eqref{eq:lambda-alpha-x}, we obtain that
\begin{equation}
\label{eq:total-shift-T}
\lambda(\alpha(P'))=\sum_{T\in\cT} \gamma_T\cdot \lambda(T),
\end{equation}
where the signs $\gamma_T\in\{+1,-1\}$ are determined by which of the gluing cycles $\sigma^i(C(x,y))$ and $\sigma^j(C(\hx,\hy))$ with $(x,y),(\hx,\hy)\in G(\cT)$, $i,j\geq 0$, are nested.

The relation~\eqref{eq:total-shift-T} allows us to compute the shift of flip sequences of periodic paths obtained by the gluing operation~$\bowtie$.
For example, consider the three periodic paths $P_1:=P(x\,0)$, $P_2:=P(\hx\,0)$, and $P_3:=P(\hy\,0)$ shown in Figure~\ref{fig:nested}, and the corresponding gluing cycles $C(x,y)$ and $C(\hx,\hy)$.
Note that the gluing cycles $\bigcup_{i\geq 0}\sigma^i(C(x,y))$ join the paths $\bigcup_{i\geq 0}\sigma^i(P_1\cup P_2)$, and the gluing cycles $\sigma^i(C(\hx,\hy))$ join the paths $\bigcup_{i\geq 0}\sigma^i(P_2\cup P_3)$.
As $\hx=\rho^{-1}(y)$, we have that $\sigma^i(C(x,y))$ and $\sigma^{i+1}(C(\hx,\hy))$ are nested for all $i\geq 0$ by Proposition~\ref{prop:Cxy}~(iii).
For $n=8$, $u=10$ and $v=11101000$ the corresponding flip sequences $\alpha_1:=\alpha(x\,0)$, $\alpha_2:=\alpha(\hx\,0)$, and $\alpha_3:=\alpha(\hy\,0)$ have the shifts $\lambda(\alpha_1)=n=8$, $\lambda(\alpha_2)=\lambda(\alpha_3)=2n=16$ (recall \eqref{eq:lambda-alpha-x}).
The 36th and 37th vertices on the periodic path $P_2\bowtie P_3$ are $\sigma^{-17}(y^1)$ and $\sigma^{-17}(y^0)$, respectively (recall that $y^0=y\,0$ and $y^1=f(y^0)$).
Consequently, $y^0$ and $y^1$ are the first two vertices on the periodic path $P_{23}:=\rev(\mov^{36}(\sigma^{17}(P_2\bowtie P_3)))$ with flip sequence $\alpha_{23}:=\rev(\mov^{36}(\alpha_2\bowtie \alpha_3-17))$.
The resulting flip sequence $\alpha:=\alpha_1\bowtie\alpha_{23}$ for the periodic path~$P:=P_1\bowtie P_{23}$ has shift $\lambda(\alpha)=\lambda(\alpha_1)-\big(\lambda(\alpha_2)+\lambda(\alpha_3)\big)=8-(16+16)=-24$.

\subsection{A first attempt at proving Theorem~\ref{thm:star}}
\label{sec:first}

Let $\cT_n$ be the spanning tree of~$\cH_n$ defined in Section~\ref{sec:Tn}, in particular, the node set of~$\cT_n$ is the set~$T_n$ of all plane trees with $n$ edges.
By Lemma~\ref{lem:Tn}, $G(\cT_n)$ is interleaving-free.
We fix the vertex $x_1:=1^n 0^{n+1}\in A_n\cup B_n$.
The set~$\cP_{G(\cT_n)}(x_1)$ defined in Section~\ref{sec:flip-T} contains a periodic path~$P$ with starting vertex~$x_1$ and second vertex~$f(x_1)$ in the middle levels graph~$M_n$ such that $\neck{P}$ has the vertex set~$N(\cT_n)=\bigcup_{[x]\in T_n} \neck{P(x\,0)}=\{\neck{x}\mid x\in A_n\cup B_n\}$, i.e., $\neck{P}$ is a Hamilton cycle in the necklace graph~$N_n$.
By~\eqref{eq:total-shift-T}, the corresponding flip sequence~$\alpha(P)$ has a shift of
\begin{equation}
\label{eq:total-shift-Tn}
\lambda(\alpha(P))=\sum_{T\in T_n} \gamma_T\cdot \lambda(T)
\end{equation}
for some signs $\gamma_T\in\{+1,-1\}$ that are determined by which gluing cycles encoded by~$\cT_n$ are nested.

With $s:=\lambda(\alpha(P))$ we define the flip sequences
\begin{equation}
\label{eq:alpha-first}
\alpha_0:=\alpha(P), \quad \alpha_i:=\alpha_0+i\cdot s \text{ for } i=1,\ldots,2n.
\end{equation}
If we apply the entire flip sequence $(\alpha_0,\alpha_1,\ldots,\alpha_{2n})$ to the starting vertex~$x_1$ in the middle levels graph~$M_n$, then we reach the vertex~$\sigma^{-i\cdot s}(x_1)$ after applying all flips in $(\alpha_0,\alpha_1,\ldots,\alpha_{i-1})$ for every $i=1,\ldots,2n+1$.
Consequently, if $s$ and $2n+1$ happen to be coprime, then we reach~$x_1$ only after applying the entire flip sequence, and as $\alpha_0=\alpha(P)$ is the flip sequence of the Hamilton cycle~$\neck{P}$ in the necklace graph~$N_n$, the resulting sequence~$C$ of bitstrings is a Hamilton cycle in the middle levels graph~$M_n$.
A star transposition Gray code for $(n+1,n+1)$-combinations satisfying the conditions of Theorem~\ref{thm:star} is then obtained from~$C$ by prefixing every bitstring of~$C$ with~1 or~0, alternatingly.

However, the aforementioned approach requires that $s=\lambda(\alpha(P))$ and $2n+1$ are coprime, which may not be the case.
Consequently, in the next section we present a technique to modify $\alpha(P)$ to another flip sequence $\alpha'$, so that $s':=\lambda(\alpha')$ is coprime to~$2n+1$.

\section{Switches}
\label{sec:switch}

In this section we develop a systematic way to modify the shift value of flip sequences, following the ideas outlined in Section~\ref{sec:idea-shift}, which allows us to prove Theorem~\ref{thm:star}.

\subsection{Switches and their shift}

For two bitstrings that differ in a single bit, we write $p(x,y)$ for the position in which~$x$ and~$y$ differ.
We say that a triple of vertices $\tau=(x,y,y')$ with $x\in A_n$, $y,y'\in B_n$ and $y\neq y'$ is a \emph{switch}, if $x$ differs both from~$y$ and from~$y'$ in a single bit, and $\neck{y}=\neck{y'}$.
In the necklace graph~$N_n$, a switch can be considered as a multiedge~$(\neck{x},\neck{y})=(\neck{x},\neck{y'})$.
The \emph{shift} of a switch~$\tau=(x,y,y')$, denoted $\lambda(\tau)$, is defined as the integer~$i$ such that $y=\sigma^i(y')$.
For example $\tau=(1110000,1110001,1111000)$ is a switch, as we have $\neck{1110001}=\neck{1111000}$, and its shift is $\lambda(\tau)=1$, as $1110001=\sigma^{1}(1111000)$.
We denote a switch $\tau=(x,y,y')$ compactly by writing~$x$ with the 0-bit at position~$p(x,y)$ underlined, and the 0-bit at position~$p(x,y')$ overlined.
The switch~$\tau$ from before is denoted compactly as $\tau=111\ol{0}00\ul{0}$.
Note that for any switch~$\tau=(x,y,y')$, the inverted switch~$\tau^{-1}:=(x,y',y)$ has shift~$\lambda(\tau^{-1})=-\lambda(\tau)$.
For example, for $\tau=111\ol{0}00\ul{0}$, the switch~$\tau^{-1}=111\ul{0}00\ol{0}$ has shift~$\lambda(\tau^{-1})=-1$.
Clearly, cyclically rotating a switch yields another switch with the same shift.
Similarly, reversing a switch yields another switch with the negated shift.
For example, the switch $\sigma(\tau)=11\ol{0}00\ul{0}1$ has shift~$+1$, and its reversed switch $1\ul{0}00\ol{0}11$ has shift~$-1$.

\subsection{Modifying flip sequences by switches}

The idea of a switch~$\tau=(x,y,y')$ is simple and yet very powerful:
Consider a flip sequence~$\alpha=(a_1,\ldots,a_k)$ with shift~$\lambda(\alpha)$ for a periodic path~$P=(x_1,\ldots,x_k)$, and let~$x_{k+1}$ be the vertex obtained from~$x_k$ by flipping the bit at position~$a_k$.
If we have $(x_i,x_{i+1})=(x,y)$ for some $i\in\{1,\ldots,k\}$, then the modified flip sequence
\begin{subequations}
\label{eq:switch}
\begin{equation}
\label{eq:swalpha1}
\alpha':=\big(a_1,\ldots,a_{i-1},p(x,y'),a_{i+1}+\lambda(\tau),\ldots,a_k+\lambda(\tau)\big)
\end{equation}
produces a periodic path~$P'=(x_1',\ldots,x_k')$ that visits necklaces in the same order as~$P$, i.e., we have $\neck{x_i}=\neck{x_i'}$ for all~$i=1,\ldots,k$, and we have
\begin{equation}
\label{eq:swlambda1}
\lambda(\alpha')=\lambda(\alpha)+\lambda(\tau).
\end{equation}
The situation where~$(x_i,x_{i+1})=(x,y')$ is symmetric, and can be analyzed with these equations by considering the inverted switch~$\tau^{-1}$ with~$\lambda(\tau^{-1})=-\lambda(\tau)$.

Similarly, if we have $(x_i,x_{i+1})=(y',x)$ for some $i\in\{1,\ldots,k\}$, then the modified flip sequence
\begin{equation}
\label{eq:swalpha2}
\alpha':=\big(a_1,\ldots,a_{i-1},p(x,y)+\lambda(\tau),a_{i+1}+\lambda(\tau),\ldots,a_k+\lambda(\tau)\big)
\end{equation}
produces a periodic path~$P'=(x_1',\ldots,x_k')$ that visits necklaces in the same order as~$P$, and we have
\begin{equation}
\label{eq:swlambda2}
\lambda(\alpha')=\lambda(\alpha)+\lambda(\tau).
\end{equation}
\end{subequations}
Again, the situation where~$(x_i,x_{i+1})=(y,x)$ is symmetric, and can be analyzed with these equations by considering the inverted switch~$\tau^{-1}$ with~$\lambda(\tau^{-1})=-\lambda(\tau)$.

In particular, if~$\neck{P}$ is a Hamilton cycle in the necklace graph~$N_n$, then $\neck{P'}$ is also a Hamilton cycle in the necklace graph, albeit one whose flip sequence has a different shift (as given by~\eqref{eq:swlambda1} and~\eqref{eq:swlambda2}).

For example, consider the flip sequence~$\alpha=6253462135$, which starting from~$x_1=1110000$ produces the periodic path~$P=(x_1,\ldots,x_{10})$ and the vertex~$x_{11}$ shown on the top left hand side of Figure~\ref{fig:c44} (recall that we omit the first bit here), and we have $\lambda(\alpha)=+1$.
For the switch~$\tau=(x,y,y')=101\ol{0}\ul{0}10$ with $\lambda(\tau)=+5$ we have~$(x_3,x_4)=(x,y)$, and according to~\eqref{eq:swalpha1} the flip sequence~$\alpha'=(6,2,p(x,y'),3+5,4+5,6+5,2+5,1+5,3+5,5+5)=6241247613$ has shift $\lambda(\alpha')=\lambda(\alpha)+\lambda(\tau)=+1+5=+6$ and produces a periodic path~$P'$ that visits necklaces in the same order as~$P$.
The path~$P'$ is shown as the rightmost solution in Figure~\ref{fig:c44}.

\subsection{Construction of switches}

We now describe a systematic way to construct many distinct switches from the canonic switch $\tau=1^n \ol{0}0^{n-1}\ul{0}$, which has shift~$\lambda(\tau)=+1$.

For any integers $n\geq 1$, $d\geq 1$ and $1\leq s\leq d$, the \emph{$(s,d)$-orbit} is the maximal prefix of the sequence~$s+id$, $i\geq 0$, considered modulo~$2n+1$, in which all numbers are distinct.
Clearly, the number of distinct $(s,d)$-orbits for fixed $d$ and $s\geq 1$ is $n_d:=\gcd(2n+1,d)$, and the length of each orbit is $\ell_d:=(2n+1)/\gcd(2n+1,d)$.
Note that both $n_d$ and $\ell_d$ are odd integers.
For example, for $n=10$ and $d=6$ there are $n_d=3$ orbits of length $\ell_d=7$, namely the $(1,6)$-orbit $(1,7,13,19,4,10,16)$, the $(2,6)$-orbit $(2,8,14,20,5,11,17)$, and the $(3,6)$-orbit $(3,9,15,21,6,12,18)$.
For any $n\geq 1$, we let $Z_n$ denote the set of all binary strings of length~$2n$ with exactly $n$ many~0s and $n$ many~1s.
For instance, we have $Z_2=\{1100,1010,1001,0110,0101,0011\}$.

The base case of our definition is the switch $\tau_{n,1}:=1^n \ol{0}0^{n-1}\ul{0}$, which has shift $\lambda(\tau_{n,1})=+1$.
For any integer $2\leq d\leq n$ that is coprime to~$2n+1$, we let $\tau_{n,d}$ denote the sequence whose entries at the positions given by the $(1,d)$-orbit equal the sequence~$\tau_{n,1}$, including the underlined and overlined bit.
In words, $\tau_{n,d}$ is obtained by filling the entries of~$\tau_{n,1}$ one by one into every $d$th position of~$\tau_{n,d}$, starting at the first one.

For any integer $3\leq d\leq n$ that is not coprime to~$2n+1$, we choose an arbitrary bitstring $z=(z_2,\ldots,z_{n_d})\in Z_{(n_d-1)/2}$, and we let $\tau_{n,d,z}$ denote the sequence whose entries at the positions given by the $(1,d)$-orbit equal the sequence $\tau_{(\ell_d-1)/2,1}$, including the underlined and overlined bit, and for $j=2,\ldots,n_d$, all entries at the positions given by the $(j,d)$-orbit equal~$z_j$.
In words, $\tau_{n,d}$ is obtained by filling the entries of~$\tau_{(\ell_d-1)/2,1}$ one by one into every $d$th position of~$\tau_{n,d}$, starting from the first one, and then filling the gaps between these entries by copies of~$z$.
Clearly, the number of choices we have for~$z$ in this construction is~$\binom{n_d-1}{(n_d-1)/2}$.

Note that the construction for coprime~$d$ can be understood as a special case of the construction for non-coprime~$d$ with~$n_d=1$ and $z=\varepsilon$.

These definitions are illustrated in Figure~\ref{fig:switch} for $n=1,\ldots,7$.
The next lemma follows immediately from these definitions.
It asserts that the sequences $\tau_{n,d}$ and $\tau_{n,d,z}$ defined before are indeed switches with shift~$d$.

\begin{lemma}
\label{lem:switch}
Let $n\geq 1$.
For any integer $1\leq d\leq n$ that is coprime to~$2n+1$, the sequence $\tau_{n,d}$ defined before is a switch with~$\lambda(\tau_{n,d})=d$.
For any integer $3\leq d\leq n$ that is not coprime to~$2n+1$ and any bitstring $z\in Z_{(n_d-1)/2}$, the sequence $\tau_{n,d,z}$ defined before is a switch with~$\lambda(\tau_{n,d,z})=d$.
\end{lemma}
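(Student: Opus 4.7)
The plan is to reduce both constructions to the canonical switch $\tau_{n,1}$ by identifying a natural equivariance between the position-permutation used in the definition and cyclic rotation. First I would verify the base case by hand: for $\tau_{n,1}=1^n\overline{0}0^{n-1}\underline{0}$ on $x=1^n0^{n+1}$, flipping the overlined zero at position $n+1$ yields $y'=1^{n+1}0^n$ and flipping the underlined zero at position $2n+1$ yields $y=1^n0^n1$; since $\sigma(y')=y$, we get $\lambda(\tau_{n,1})=1$, and trivially $x\in A_n$, $y,y'\in B_n$ with $y\neq y'$.

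For the coprime case, let $g(i):=1+(i-1)d\pmod{2n+1}$, which is a bijection on $\{1,\ldots,2n+1\}$. By construction $\tau_{n,d}$ is $\tau_{n,1}$ with its positions permuted by $g$, so it inherits the bit counts $n$ ones and $n+1$ zeros, placing $\tau_{n,d}$ in $A_n$. The key observation is the equivariance
\[
\varphi_g(\sigma(w))=\sigma^d(\varphi_g(w))
\]
for every length-$(2n+1)$ bitstring $w$, where $\varphi_g$ is the operator permuting positions by $g$. This is a one-line check: the entry originally at position $i+1$ of $w$ lands at global position $g(i)$ on both sides, since $g(i+1)-g(i)\equiv d\pmod{2n+1}$. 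Applying $\varphi_g$ to the canonical identity $y=\sigma(y')$ of $\tau_{n,1}$, and noting that flipping the overlined (resp.\ underlined) bit in $\tau_{n,d}$ is precisely the $\varphi_g$-image of the corresponding flip in $\tau_{n,1}$, we obtain $y=\sigma^d(y')$ for $\tau_{n,d}$, so $\lambda(\tau_{n,d})=d$.

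For the non-coprime case, the positions $\{1,\ldots,2n+1\}$ partition into $n_d$ orbits of length $\ell_d$, each closed under addition of $d$. I would view $\tau_{n,d,z}$ as an interleaving: the canonical switch $\tau_{m,1}$ with $m:=(\ell_d-1)/2$ placed on the $(1,d)$-orbit (which has length $\ell_d=2m+1$), and the constant value $z_j$ placed on the $j$-th orbit for $j\ge 2$. The bit counts check out because $z\in Z_{(n_d-1)/2}$ contributes $\ell_d(n_d-1)/2$ ones from the constant orbits while the $(1,d)$-orbit contributes $m=(\ell_d-1)/2$ ones, summing to $(\ell_d n_d-1)/2=n$. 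For the shift, observe that $\sigma^d$ acts within each orbit as a left-rotation by one step in the orbit's intrinsic order: on the constant orbits it is the identity, and on the $(1,d)$-orbit it effects precisely the canonical shift that sends $y'_0$ to $y_0$ established in the base case. Gluing these contributions back together yields $y=\sigma^d(y')$ globally, proving $\lambda(\tau_{n,d,z})=d$.

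The main subtlety I anticipate is bookkeeping: matching the orbit-local position of the overlined and underlined bits in the template $\tau_{m,1}$ with their global positions in $\tau_{n,d,z}$, and carrying out the equivariance argument consistently modulo $2n+1$. Once the indexing is set up cleanly, both cases collapse to a single application of the canonical identity; indeed, the coprime case is the specialization $n_d=1$, $\ell_d=2n+1$, $z=\varepsilon$ of the general framework, so a unified presentation via $\varphi_g\circ\sigma=\sigma^d\circ\varphi_g$ (generalized to the partitioned setting) suffices.
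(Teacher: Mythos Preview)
Your argument is correct. The paper itself offers no proof beyond the sentence ``the next lemma follows immediately from these definitions,'' so your equivariance calculation $\varphi_g\circ\sigma=\sigma^d\circ\varphi_g$ and its orbit-wise extension to the non-coprime case is precisely the unpacking of that claim; in particular your bit-count check and the verification that $\sigma^d$ acts as the identity on the constant orbits and as a one-step intrinsic shift on the $(1,d)$-orbit are exactly what is needed.
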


\begin{figure}
\includegraphics{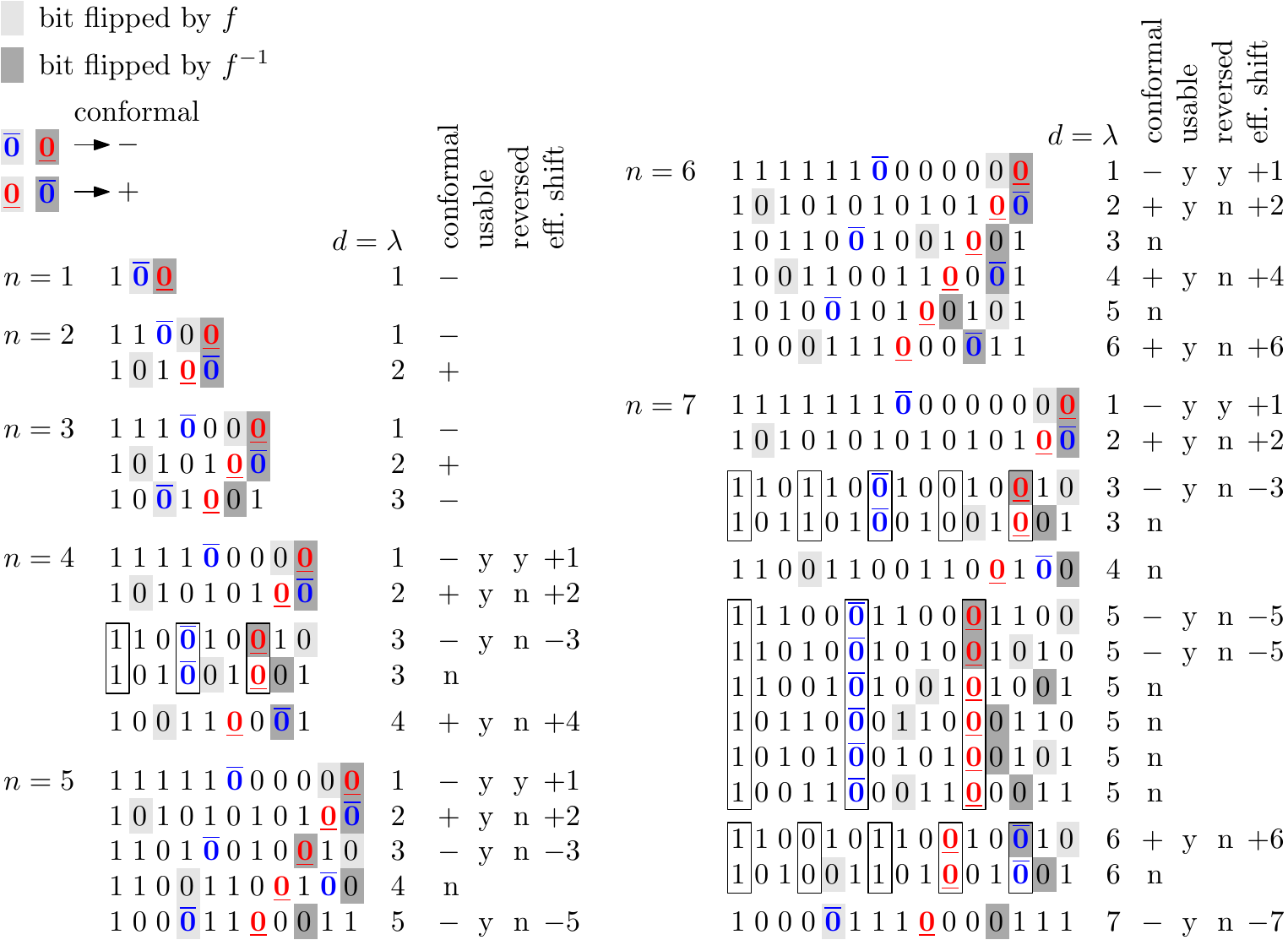}
\caption{All switches for $n=1,\ldots,7$.
The switch $\tau_{n,1}$ is shown as the first switch in each block, and the remaining switches are ordered by increasing~$d$.
The bits flipped by~$f$ and~$f^{-1}$ are marked light gray and dark gray, respectively.
The framed bits belong to a $(1,d)$-orbit for some~$d$ that is not coprime to~$2n+1$.
Whether a switch is $f$-conformal or $f^{-1}$-conformal is indicated by $+$ or $-$, respectively, and by `n' if neither of the two.
Similarly, a switch being usable or reversed is indicated by `y'=yes and `n'=no.
The resulting effective shifts are shown in the rightmost column.
The latter three properties are w.r.t.~$G(\cT_n)$ for the spanning tree~$\cT_n$ defined in Section~\ref{sec:Tn-new}.
}
\label{fig:switch}
\end{figure}

One may ask whether there are switches other than the ones described by Lemma~\ref{lem:switch}.
It can be shown that this is not the case, i.e., every possible switch is obtained in one of the two ways described by the lemma, and by reversal and cyclic rotations.

\subsection{Interaction with other structures}
\label{sec:interaction}

We now describe how the switches defined before interact with the periodic paths introduced in Section~\ref{sec:paths} and with the gluing cycles introduced in Section~\ref{sec:gluing}.

Recall the definition of the function~$f$ from~\eqref{eq:f}.
We say that a switch $\tau=(x,y,y')$ is \emph{$f$-conformal}, if $y=f(x)$ or if $x=f(y')$, and then we refer to~$(x,y)$ or~$(y',x)$, respectively, as the \emph{$f$-edge} of the switch.
Also, we say that~$\tau$ is \emph{$f^{-1}$-conformal}, if the inverted switch $\tau^{-1}$ is $f$-conformal, and we refer to the $f$-edge of~$\tau^{-1}$ also as the $f$-edge of~$\tau$.
A switch being $f$-conformal means that its $f$-edge belongs to a periodic path defined in~\eqref{eq:Px}.

Given a set of gluing pairs~$G\subseteq G_n$, we say that an $f$-conformal or $f^{-1}$-conformal switch~$\tau$ is \emph{usable w.r.t.~$G$}, if for every gluing pair~$(\hx,\hy)\in G$ and all $i\geq 0$, the three $f$-edges of the gluing cycle~$\sigma^i(C(\hx,\hy))$ defined in~\eqref{eq:Cxy}, i.e., the edges $\sigma^i((\hx^0,\hx^1))$, $\sigma^i((\hx^5,\hx^6))$ and $\sigma^i((\hy^0,\hy^1))$ as defined in~\eqref{eq:xyi} are distinct from the $f$-edges of~$\tau$.
Recall from~\eqref{eq:Pxy} and~\eqref{eq:P12} that the three $f$-edges are \emph{removed} when joining periodic paths, so a switch whose $f$-edge is one of the removed edges would not be relevant for us.

\begin{lemma}
\label{lem:usable}
Consider an $f^{-1}$-conformal switch $\tau=(x,y,y')$ with $f$-edge $(y,x)$ for which $t(x)=\cdots 00$.
Then $\tau$ is usable w.r.t.\ any set of gluing pairs $G\subseteq G_n$.
\end{lemma}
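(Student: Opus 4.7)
The plan is to rule out each of the three possible bad coincidences between the $f$-edge $(y,x)$ of $\tau$ and the three $f$-edges of $\sigma^i(C(\hx,\hy))$, i.e.\ the edges $\sigma^i((\hx^0,\hx^1))$, $\sigma^i((\hx^5,\hx^6))$, and $\sigma^i((\hy^0,\hy^1))$, for every gluing pair $(\hx,\hy)\in G$ and every $i\geq 0$. Two of the three cases will be disposed of by a cheap parity/side argument, and only one case genuinely uses the hypothesis $t(x)=\cdots 00$.

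First I would record the side types. Since $\tau=(x,y,y')$ with $x\in A_n$, $y\in B_n$, and $f(y)=x$, the $f$-edge of $\tau$ is directed from $B_n$ to $A_n$. Reading off the structure in~\eqref{eq:xyi} for any gluing pair $(\hx,\hy)\in G_n$, the edges $(\hx^0,\hx^1)$ and $(\hy^0,\hy^1)$ both flip a $0$ to a $1$, hence go from $A_n$ to $B_n$, whereas $(\hx^5,\hx^6)$ flips a $1$ to a $0$ and goes from $B_n$ to $A_n$. Since cyclic rotation preserves these types, coincidences of $(y,x)$ with $\sigma^i((\hx^0,\hx^1))$ or $\sigma^i((\hy^0,\hy^1))$ are immediately impossible, and I am left with the single case $(y,x)=\sigma^i((\hx^5,\hx^6))$.

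For that remaining case, the key observation is that the map $t$ is invariant under cyclic rotation (straight from its definition via $\ell(\cdot)$), so if $x=\sigma^i(\hx^6)$ then $t(x)=t(\hx^6)$. But by~\eqref{eq:Txi} we have $t(\hx^6)=u\,1\,v\,0\,1\,0$, whose last two bits are $10$, not $00$. This contradicts the hypothesis $t(x)=\cdots 00$ and thus rules out the only remaining coincidence, completing the proof.

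I do not anticipate a real obstacle here: the argument is a two-line type check followed by a one-line application of~\eqref{eq:Txi} and the rotation-invariance of~$t$. The only thing worth being careful about is making the rotation-invariance statement precise (noting that $\ell(\sigma^i(x))\equiv\ell(x)-i\pmod{2n+1}$, so $\sigma^{\ell(\sigma^i(x))}(\sigma^i(x))=\sigma^{\ell(x)}(x)$, whence $t(\sigma^i(x))=t(x)$), and to remark explicitly that the conclusion is uniform in $i\geq 0$ and in the choice of $(\hx,\hy)\in G$, which is why the result holds w.r.t.\ any $G\subseteq G_n$.
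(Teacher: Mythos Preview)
Your proposal is correct and follows essentially the same approach as the paper's own proof: dismiss the edges $\sigma^i((\hx^0,\hx^1))$ and $\sigma^i((\hy^0,\hy^1))$ by the $A_n$/$B_n$ type mismatch, and rule out $\sigma^i((\hx^5,\hx^6))$ by comparing the last two bits of $t(\hx^6)=u\,1\,v\,0\,1\,0$ with the hypothesis $t(x)=\cdots 00$. The only difference is cosmetic: you spell out the rotation-invariance of $t$ explicitly, whereas the paper leaves it implicit.
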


\begin{proof}
To show that $\tau$ is usable w.r.t.~$G$, we need to verify that the $f$-edge $(y,x)$ of~$\tau$ is distinct from the $f$-edges of any gluing cycle $\sigma^i(C(\hx,\hy))$ with $(\hx,\hy)\in G$ and $i\geq 0$.
As $y\in B_n$ and $x\in A_n$, it is enough to verify this for the $f$-edge $\sigma^i((\hx^5,\hx^6))$.
Recall from~\eqref{eq:xyi} that $\hx^6=1\,0\,0\,u\,1\,v\,0$ for some $u,v\in D$ and therefore $t(\hx^6)=u\,1\,v\,0\,1\,0$, i.e., this string ends with~$10$.
As the string~$t(x)$ ends with~$00$ by our assumptions in the lemma, we have $t(\hx^6)\neq t(x)$, proving the claim.
\end{proof}

\begin{lemma}
\label{lem:tau12}
Let $n\geq 4$.
The switch $\tau_{n,1}=:(x,y,y')$ has the $f$-edge $(y,x)$ and is $f^{-1}$-conformal.
The switch $\tau_{n,2}=:(x,y,y')$ has the $f$-edge $(y',x)$ and is $f$-conformal.
Moreover, both switches are usable w.r.t.\ any set of gluing pairs $G\subseteq G_n$.
\end{lemma}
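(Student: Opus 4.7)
The plan is to unpack the explicit bitstrings defining $\tau_{n,1}$ and $\tau_{n,2}$, verify the claimed $f$-edges via direct computation with the formulas~\eqref{eq:f} and~\eqref{eq:ffx}, then invoke Lemma~\ref{lem:usable} for $\tau_{n,1}$ and argue usability of $\tau_{n,2}$ by a direct parity-of-decomposition argument.

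First I would read off the triples: from $\tau_{n,1}=1^n\ol{0}0^{n-1}\ul{0}$ we get $(x,y,y')=(1^n 0^{n+1},\, 1^n 0^n 1,\, 1^{n+1} 0^n)$; and using the $(1,2)$-orbit $(1,3,\ldots,2n+1,2,4,\ldots,2n)$ to fill in entries of $\tau_{n,1}$, the overlined and underlined zeros land at positions~$2n+1$ and~$2n$, yielding $\tau_{n,2}=(x,y,y')$ with $x=(10)^n 0$, $y=(10)^{n-1} 1 1 0$, and $y'=(10)^n 1$. To locate the $f$-edges I would observe that $\sigma^{2n}(y)=1\cdot 1^n 0^n$ and $\sigma^{2n}(y')=1\cdot (10)^n$, both of the form $1\cdot t$ with $t$ a Dyck word, so $\ell(y)=\ell(y')=2n$, $t(y)=1^n 0^n$, and $t(y')=(10)^n$. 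The base case of the definition of~$f$ gives $f(1\cdot t)=0\cdot t$; unrolling via~\eqref{eq:ffx} with $\sigma^{-2n}=\sigma^1$ gives $f(y)=\sigma^1(0\cdot 1^n 0^n)=1^n 0^{n+1}=x$ and $f(y')=\sigma^1(0\cdot (10)^n)=(10)^n 0=x$. Therefore $(y,x)$ is an $f$-edge of $\tau_{n,1}$, making it $f^{-1}$-conformal, and $(y',x)$ is an $f$-edge of $\tau_{n,2}$, making it $f$-conformal.

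For usability of $\tau_{n,1}$, a direct computation gives $\ell(x)=0$ and $t(x)=1^n 0^n$, whose last two bits are $00$ since $n\geq 4$, so Lemma~\ref{lem:usable} applies verbatim. For $\tau_{n,2}$ the $f$-edge $(y',x)$ runs $B_n\to A_n$, and inspecting the three gluing-cycle $f$-edges $(\hx^0,\hx^1),(\hx^5,\hx^6),(\hy^0,\hy^1)$ from~\eqref{eq:xyi}, only $(\hx^5,\hx^6)$ runs $B_n\to A_n$. Since $f$ is injective, it suffices to prove $\neck{x}\neq\neck{\hx^6}$ for every valid gluing pair $(\hx,\hy)\in G_n$. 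Because necklaces are determined by $t(\cdot)$, and $t(\hx^6)=u\,1\,v\,0\,1\,0$ by~\eqref{eq:Txi} while $t(x)=(10)^n$, the task reduces to ruling out solutions of $u\,1\,v=(10)^{n-2}\,1$ with $u,v\in D$.

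The main obstacle is pinning this equation to its unique forbidden solution. I would observe that any Dyck-word prefix of $(10)^{n-2}\,1$ is of the form $u=(10)^k$ for some $0\leq k\leq n-2$, so $v$ is the suffix of $(10)^{n-2}\,1$ starting at position $2k+2$; for $k\leq n-3$ this position is even and at most $2n-4$, so $v$ starts with a $0$ and is not Dyck, while for $k=n-2$ the position is past the end, forcing $v=\varepsilon$. Thus $k=n-2$ is forced, giving $u=(10)^{n-2}$, $v=\varepsilon$, and $\hx=1\,1\,0\,(10)^{n-2}\,0=s_n$. Since the only gluing pair with first entry~$s_n$ is $(s_n,s_n')$, which is explicitly excluded from $G_n$, no valid gluing pair produces a conflicting $f$-edge, so $\tau_{n,2}$ is usable with respect to any $G\subseteq G_n$.
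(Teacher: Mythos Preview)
Your proof is correct and follows essentially the same approach as the paper. The only cosmetic difference is in the usability argument for~$\tau_{n,2}$: the paper observes that $\rho^{-3}(t(x))=1(10)^{n-1}0=s_n$ and then uses that $\rho^3$ is a bijection to conclude $t(\hx^6)=\rho^3(\hx)\neq\rho^3(s_n)=t(x)$ whenever $\hx\neq s_n$, whereas you solve the equation $u\,1\,v\,0\,1\,0=(10)^n$ directly to pin down $\hx=s_n$; both routes reach the same excluded pair $(s_n,s_n')$.
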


\begin{proof}
We first consider the switch $\tau_{n,1}$.
By definition, we have $\tau_{n,1}=1^n\ol{0}0^{n-1}\ul{0}=:(x,y,y')$, i.e., $x\in A_n$ and $y\in B_n$ differ in the last bit.
We clearly have $t(x)=1^n0^n$.
By the definition of~$f:B_n\rightarrow A_n$ in~\eqref{eq:ffx0} we thus have $x=f(y)$, proving that $(y,x)$ is an $f$-edge, which implies that the switch is $f^{-1}$-conformal.
Note that $t(x)=\cdots 00$ for $n\geq 4$, and therefore $\tau_{n,1}$ is usable w.r.t.~$G$ by Lemma~\ref{lem:usable}.

It remains to consider the switch $\tau_{n,2}$.
By definition, we have $\tau_{n,2}=(10)^{n-1}1\ul{0}\ol{0}=:(x,y,y')$, i.e., $x\in A_n$ and $y'\in B_n$ differ in the last bit.
We clearly have $t(x)=(10)^n$.
By the definition of $f:B_n\rightarrow A_n$ in~\eqref{eq:ffx0} we thus have $x=f(y')$, proving that $(y',x)$ is an $f$-edge, which implies that the switch is $f$-conformal.
To show that $\tau_{n,2}$ is usable w.r.t.~$G$, we consider the tree $t':=\rho^{-3}(t(x))=1(10)^{n-1}0=s_n$, which is the star with $n$ edges rooted at a leaf.
It follows trivially from the definition~\eqref{eq:gluing} that for every gluing pair~$(\hx,\hy)\in G$ we have $\hx=t(\hx^0)\neq t'$ and therefore $t(\hx^6)=\rho^3(t(\hx^0))\neq \rho^3(t')=t(x)$ (recall Proposition~\ref{prop:Fn}~(i)).
\end{proof}

\begin{lemma}
\label{lem:taundz}
Let $n\geq 11$ and consider integers $c,d\geq 3$ such that $2n+1=c\cdot d$.
Then the switch $\tau_{n,d,z}=:(x,y,y')$ with $z:=1^{(d-1)/2}0^{(d-1)/2}\in Z_{(d-1)/2}$ has the $f$-edge $(y,x)$ and is $f^{-1}$-conformal.
Moreover, it is usable w.r.t.~$G(\cT_n)$ for the spanning tree~$\cT_n$ of~$\cH_n$ defined in Section~\ref{sec:Tn}.
\end{lemma}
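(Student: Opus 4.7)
The proof splits into two tasks: verifying the $f$-edge of~$\tau$, and verifying usability with respect to~$G(\cT_n)$. For the $f$-edge, the plan is to compute $\ell(y)$ explicitly. Since $y$ is obtained from~$x$ by flipping the underlined bit at position $p := 2n+2-d$ from~$0$ to~$1$, it suffices to exhibit a rotation of~$y$ beginning with~$1$ whose remaining $2n$ bits form a Dyck word. Set $w := \sigma^{2n+1-d}(y) = \sigma^{-d}(y)$; the right-shift by~$d$ preserves each $(j,d)$-orbit and cyclically shifts the values within it by one position, so the orbit structure of $\tau_{n,d,z}$ with $z = 1^{(d-1)/2}0^{(d-1)/2}$ gives the closed form
\[
w \;=\; \bigl(1^{(d+1)/2}\,0^{(d-1)/2}\bigr)^{m+1}\bigl(0\,1^{(d-1)/2}\,0^{(d-1)/2}\bigr)^{m}, \qquad m := (c-1)/2.
\]
A block-by-block excess trace of $w_2\cdots w_{2n+1}$ shows the excess rises monotonically from~$0$ to~$m$ across the first $m+1$ blocks and falls monotonically back to~$0$ across the final~$m$ blocks, never going negative. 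Hence $w_2\cdots w_{2n+1}$ is a Dyck word, giving $\ell(y) = 2n+1-d$ and $t(y) = w_2\cdots w_{2n+1}$. Since $f$ acts on $y \in B_n$ by flipping the bit at position $\ell(y)+1 = p$ from~$1$ to~$0$, and this is exactly the bit distinguishing $y$ from~$x$, we conclude $f(y) = x$; thus $\tau$ is $f^{-1}$-conformal with $f$-edge~$(y,x)$.

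For usability it suffices to rule out coincidences of $(y,x)$ with the $f$-edge $\sigma^i((\hx^5,\hx^6))$ of any gluing cycle $\sigma^i(C(\hx,\hy))$ with $(\hx,\hy) \in G(\cT_n)$: the other two $f$-edges $\sigma^i((\hx^0,\hx^1))$ and $\sigma^i((\hy^0,\hy^1))$ point $A_n \to B_n$ under~$f$ while $(y,x)$ points $B_n \to A_n$, so any coincidence would force $y = f(x) = f^2(y)$, contradicting $\kappa(y) = 2\lambda(t(y)) \geq 4$ (Lemmas~\ref{lem:kx}(v) and~\ref{lem:lambda}). By $\sigma$-invariance of~$t$ together with $t(\hx^6) = \rho^3(\hx)$ from~\eqref{eq:Txi}, any remaining coincidence forces $\rho^3(\hx) = t(x)$, a unique Dyck-word equation for~$\hx$. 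When $d \geq 5$, the final block of~$t(y)$ ends in $0^{(d-1)/2}$ with $(d-1)/2 \geq 2$ trailing zeros, so $t(x) = t(y)$ ends in~$00$; since $\rho^3(\hx) = u\,1\,v\,0\,1\,0$ always ends in~$10$, the equation has no solution. This is exactly the content of Lemma~\ref{lem:usable}.

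The delicate case is $d = 3$, where $t(x) = 10\,(110)^m(010)^m$ also ends in~$10$. Matching $t(x) = u\,1\,v\,0\,1\,0$ with $u, v \in D$ pins down $u = 10$ and $v = 10\,(110)^{m-1}(010)^{m-1}$, giving the unique candidate $\hx = 110\cdot 10\cdot 0\cdot 10\,(110)^{m-1}(010)^{m-1}$. The plan is to show this $\hx$ is \emph{not} the arc label of~$\cT_n$ at $T := [\hx]$. The pull operation at this $\hx$ pulls the leftmost child of the leftmost child of the root of~$\hx$ (a leaf at distance~$3$ from the root), so by Section~\ref{sec:Tn}'s construction, membership in $G(\cT_n)$ would force this particular leaf to be the leftmost leaf of some active $c$-subtree of~$T$ that is not a single edge, for the chosen centroid~$c$. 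I will locate~$c$ from the self-similar recursive structure of the candidate (a nested ``leaf, complex, leaf'' pattern of decreasing depth) via the standard subtree-size characterization of centroids, and then inspect the ccw neighbor orderings around~$c$ (inherited from the Dyck-word planar embedding) to show that in every active non-single-edge $c$-subtree, the leftmost leaf in pre-order is different from our pulled leaf. This contradiction, verified uniformly for $m \geq 4$ (the smallest case with $n = 3m+1 \geq 11$), completes the proof. The crux of the argument is precisely this last step: the regularity of~$t(x)$ and the construction of~$\cT_n$ must be carefully matched against each other to pin down the centroid and verify the leftmost-leaf condition.
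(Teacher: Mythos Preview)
Your proposal is correct and follows essentially the same route as the paper: establish $f^{-1}$-conformality by identifying the Dyck word $t(y)=t(x)=z(1z)^m(0z)^m$, invoke Lemma~\ref{lem:usable} for $d\geq 5$, and for $d=3$ pin down the unique candidate $\hx=\rho^{-3}(t(x))$ and rule it out of~$G(\cT_n)$ by locating the centroid and checking the leftmost-leaf condition. One small slip: the pulled leaf in~$\hx=1\,1\,0\,u\,0\,v$ is the leftmost grandchild of the root, hence at distance~$2$ (not~$3$) from the root; and like the paper, you leave the actual centroid computation for $d=3$ as a verification rather than carrying it out, which is acceptable at this level of detail.
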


Note that since $2n+1$ is odd, both $c$ and~$d$ are also odd integers.

\begin{proof}
By definition, we have $\tau_{n,d,z}=(1\,z)^{(c-1)/2}\,\ol{0}\,z\,(0\,z)^{(c-3)/2}\,\ul{0}\,z=:(x,y,y')$, i.e., $x\in A_n$ and $y\in B_n$ differ in the bit before the suffix~$z$.
Note that $z$ is a Dyck word by definition, and consequently we have
\begin{equation}
\label{eq:tx1}
t(x)=z\,(1\,z)^{(c-1)/2}(0\,z)^{(c-1)/2}.
\end{equation}
By the definition of $f:B_n\rightarrow A_n$ in~\eqref{eq:ffx0} we thus have $x=f(y)$, proving that $(y,x)$ is an $f$-edge, which implies that the switch is $f^{-1}$-conformal.
From~\eqref{eq:tx1} we see that $t(x)=\cdots z$, so if $d\geq 5$, then $t(x)=\cdots 00$, and therefore $\tau_{n,1}$ is usable w.r.t.~$G(\cT_n)$ by Lemma~\ref{lem:usable}.

It remains to consider the case $d=3$, i.e., we have $c=(2n+1)/3$ and $z=10$.
As $2n+1$ is not divisible by~3 for $n=11$ and $n=12$, we have in particular that $n\geq 13$.
In this case the equation~\eqref{eq:tx1} can be written as
\begin{equation}
\label{eq:tx2}
t(x)=(z\,1)^{(c-1)/2}\,z\,(0\,z)^{(c-1)/2}=(101)^{(c-1)/2}10(010)^{(c-1)/2},
\end{equation}
i.e., the rooted tree~$t(x)$ is obtained from a path of length~$(c+1)/2$ rooted at one of its end vertices by attaching a pending edge to the left and right of each vertex in distance at most~$(c-3)/2$ from the root.
As a sanity check, the total number of edges of the tree~$t(x)$ is $(c+1)/2+2(c-1)/2=c/2+1/2+c-1=(3c-1)/2=n$.
Now consider the rooted tree~$t':=\rho^{-3}(t(x))=110100(101)^{(c-3)/2}10(010)^{(c-3)/2}=1\,1\,0\,u\,0\,1\,0\,w\,1\,0=1\,1\,0\,u\,0\,v$ with $u:=10\in D$, $w:=1(101)^{(c-5)/2}10(010)^{(c-5)/2}0\in D$ and $v:=1\,0\,w\,1\,0\in D$.
Using Lemma~\ref{lem:centroid}, one can verify directly that for $n\geq 13$, the unique centroid~$c$ of the pullable tree~$t'$ (recall~\eqref{eq:gluing}) lies in the subtree~$w$ in distance at least~1 from the root, and consequently the leftmost leaf~$a$ of~$t'$ (which is in distance~2 from the root) is not the leftmost leaf of the $c$-subtree of~$[t']$ containing~$a$.
Consequently, by the definition of~$\cT_n$ in Section~\ref{sec:Tn}, for every gluing pair~$(\hx,\hy)\in G(\cT_n)$ we have $\hx=t(\hx^0)\neq t'$ and therefore $t(\hx^6)=\rho^3(t(\hx^0))\neq \rho^3(t')=t(x)$ (recall Proposition~\ref{prop:Fn}~(i)).
\end{proof}

\subsection{Number-theoretic lemmas}
\label{sec:number}

Recall that the proof attempt presented in Section~\ref{sec:first} may yield a flip sequence with a shift~$s$ that is not coprime to~$2n+1$.
We will correct this by applying one or two switches so that the resulting modified flip sequence has a shift of~$s':=s+d$ or~$s':=s-d$ for some integer~$d\geq 1$, such that~$s'$ is coprime to~$2n+1$.
It is important to realize that while we can choose the value of~$d$ by picking a suitable switch, we have no control whether the modification will result in a shift of~$s+d$ or~$s-d$, as we have no information about whether the $f$-edge of the switch is oriented conformly or oppositely along the flip sequence obtained from gluing (this is determined by the nesting of the gluing cycles).
This is why the following two basic number-theoretic lemmas, which provide the basis for these modifications, always make claims about both numbers~$s+d$ and~$s-d$.

For any integer~$n\geq 1$ we write $\cP(n)$ for the set of prime factors of~$n$ (without multiplicities).
Moreover, for any~$s\in\{0,1,\ldots,n-1\}$ we define $\cP(n,s):=\cP(n)\setminus \cP(s)$ if $s>0$ and $\cP(n,0):=\emptyset$.
For example, for $n=3^2\cdot 5\cdot 11^3=59.895$ and $s=3^2\cdot 7^3\cdot 11=33.957$ we have $\cP(n)=\{3,5,11\}$, $\cP(s)=\{3,7,11\}$ and $\cP(n,s)=\{5\}$.
\todo{There might be some confusion here with the symbol $\cP$ used already earlier in the context of gluing, but I don't know a good solution for this now.}

\begin{lemma}
\label{lem:coprime-change}
Let $n\geq 1$ be such that $2n+1$ is not a prime power, and consider an integer $s\in\{0,\ldots,2n\}$ that is not coprime to~$2n+1$.
If $\cP(2n+1,s)\neq \emptyset$, then for $d:=\prod_{p\in \cP(2n+1,s)}p$ both numbers $s+d$ and $s-d$ are coprime to~$2n+1$.
If $\cP(2n+1,s)=\emptyset$, then for any $d\in \cP(2n+1)$ we have $\cP(2n+1,s+d)=\cP(2n+1)\setminus \{d\}\neq \emptyset$ and $\cP(2n+1,s-d)=\cP(2n+1)\setminus \{d\}\neq \emptyset$.
\end{lemma}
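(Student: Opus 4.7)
The plan is to reduce the lemma to a prime-by-prime analysis. Writing $N := 2n+1$ for brevity, the central observation I would open with is that for any prime $p$ with $p \mid N$, divisibility of an integer $m$ by $p$ is unaffected by reducing $m$ modulo $N$; this lets us treat $s\pm d$ either as signed integers or as residues in $\{0,\ldots,N-1\}$ without ambiguity, which handles the subtlety that $s - d$ may be negative or zero.

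For the first part of the lemma, I would set $d := \prod_{p \in \cP(N,s)} p$ and verify directly that no prime $p \in \cP(N)$ divides $s \pm d$. Partitioning $\cP(N) = A \sqcup B$ with $A := \cP(N) \cap \cP(s)$ and $B := \cP(N,s)$, the construction of $d$ forces $p \mid s$ and $p \nmid d$ when $p \in A$, and $p \nmid s$ and $p \mid d$ when $p \in B$. In either case $p \nmid s \pm d$, which immediately gives $\gcd(s\pm d, N) = 1$. Notably, the hypothesis that $N$ is not a prime power is not needed here.

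For the second part, the hypothesis $\cP(N,s) = \emptyset$ splits into two sub-cases: $s = 0$ (where the set is empty by definition), or $s > 0$ with $\cP(N) \subseteq \cP(s)$. The unifying fact in both sub-cases is that \emph{every} prime $p \in \cP(N)$ divides $s$ (trivially when $s=0$, by the sub-case hypothesis otherwise). Picking any $d \in \cP(N)$, a one-line case check for a fixed prime $p \in \cP(N)$ then shows $p \mid s \pm d$ iff $p = d$: if $p = d$, both $p \mid s$ and $p \mid d$ hold so $p \mid s \pm d$; if $p \neq d$, then $p \nmid d$ while $p \mid s$, so $p \nmid s \pm d$. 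Hence $\cP(N) \cap \cP(s \pm d) = \{d\}$, which rearranges to $\cP(N, s\pm d) = \cP(N) \setminus \{d\}$. Non-emptiness of the latter is precisely where the hypothesis ``$N$ is not a prime power'' enters, by way of $|\cP(N)| \geq 2$.

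The main obstacle is really just bookkeeping: keeping the $s=0$ convention straight and uniformizing the two sub-cases of Case~2, together with the sign issue for $s-d$. Once the opening reduction to ``divisibility is insensitive to reduction mod $N$'' is made, both parts collapse to the single mechanical dichotomy above, and no deeper number-theoretic input is required.
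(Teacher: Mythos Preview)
Your proposal is correct and follows essentially the same prime-by-prime divisibility analysis as the paper: partitioning $\cP(N)$ according to whether each prime divides $s$, and then checking that $s\pm d$ has the claimed divisibility behavior with respect to each such prime. Your treatment is slightly more explicit about the edge cases ($s=0$, the sign of $s-d$, and where the ``not a prime power'' hypothesis actually enters), but the underlying argument is the same.
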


\begin{proof}
We first consider the case $\cP(2n+1,s)\neq \emptyset$, which in particular means that $s\neq 0$.
By the definition of~$d$, we have $s+d=s\neq 0\pmod{p}$ for any $p\in \cP(2n+1,s)$.
Moreover, we have $s+d=d\neq 0\pmod{p}$ for any $p\in \cP(2n+1)\cap \cP(s)$, proving that $s+d$ is coprime to~$2n+1$.
The argument that $s-d$ is coprime to~$2n+1$ is analogous.

We now consider the case $\cP(2n+1,s)=\emptyset$, and we fix some $d\in \cP(2n+1)$.
Observe that $s+d=d\neq 0\pmod{p}$ for any $p\in \cP(2n+1)\setminus \{d\}$, and $s+d=s=0\pmod{d}$, proving that $\cP(2n+1,s+d)=\cP(2n+1)\setminus \{d\}$.
As $2n+1$ is not a prime power, $\cP(2n+1)$ has at least two distinct elements, and therefore $\cP(2n+1)\setminus\{d\}$ is nonempty.
The argument that $s-d$ satisfies $\cP(2n+1,s-d)=\cP(2n+1)\setminus \{d\}\neq \emptyset$ is analogous.
\end{proof}

The following lemma can be easily verified by hand.
We omit the details.

\begin{lemma}
\label{lem:coprime-small}
For any integer $4\leq n\leq 10$ and any integer~$s\in\{0,\ldots,2n\}$ that is not coprime to~$2n+1$, both numbers in at least one of the sets $\{s-1,s+1\}$, $\{s-2,s+2\}$, $\{s-1,s+2\}$, $\{s+1,s-2\}$ are coprime to~$2n+1$.
\end{lemma}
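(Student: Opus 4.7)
The plan is to verify this lemma purely by finite case analysis, since the range $4\leq n\leq 10$ yields only seven values of $2n+1$, namely $2n+1\in\{9,11,13,15,17,19,21\}$, and for each the set of non-coprime residues $s\in\{0,\ldots,2n\}$ is small. I would organize the cases by whether $2n+1$ is prime or composite.

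First I would dispose of the prime cases $2n+1\in\{11,13,17,19\}$, i.e.\ $n\in\{5,6,8,9\}$. For these, the only $s\in\{0,\ldots,2n\}$ that fails to be coprime to $2n+1$ is $s=0$, and then $\{s-1,s+1\}=\{2n,1\}$ consists of two units modulo the prime $2n+1>2$, so the first candidate set already works.

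Next I would handle the composite cases $2n+1\in\{9,15,21\}$, i.e.\ $n\in\{4,7,10\}$, by enumeration. The non-coprime residues are
\begin{align*}
2n+1=9 &:\; s\in\{0,3,6\},\\
2n+1=15 &:\; s\in\{0,3,5,6,9,10,12\},\\
2n+1=21 &:\; s\in\{0,3,6,7,9,12,14,15,18\}.
\end{align*}
For each such $s$ I would test the four candidate sets $\{s\pm1\}$, $\{s\pm2\}$, $\{s-1,s+2\}$, $\{s+1,s-2\}$ in order and exhibit one whose two entries are both coprime to $2n+1$. In most subcases the set $\{s-1,s+1\}$ already works, for instance whenever $s$ is a multiple of the unique odd prime factor but not adjacent to another multiple of a prime factor.

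The only genuinely tight situations, and thus the place to be careful, are those where $s$ is simultaneously close to multiples of two distinct prime factors of $2n+1$, forcing one to use one of the asymmetric sets $\{s-1,s+2\}$ or $\{s+1,s-2\}$. The two representative examples to check explicitly are $(2n+1,s)=(15,10)$, where $\{s\pm1\}=\{9,11\}$ fails because $3\mid 9$, $\{s\pm2\}=\{8,12\}$ fails because $3\mid 12$, $\{s-1,s+2\}=\{9,12\}$ fails for both, and only $\{s+1,s-2\}=\{11,8\}$ succeeds; and $(2n+1,s)=(21,7)$, where $\{s\pm1\}=\{6,8\}$ fails ($3\mid 6$), $\{s\pm2\}=\{5,9\}$ fails ($3\mid 9$), $\{s-1,s+2\}=\{6,9\}$ fails for both, and only $\{s+1,s-2\}=\{8,5\}$ succeeds. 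Since every remaining subcase is easier, the enumeration completes the proof, and the ``main obstacle'' is simply the bookkeeping to not overlook one of these asymmetric tight instances.
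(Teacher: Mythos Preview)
Your proposal is correct and follows exactly the approach the paper intends: the paper simply states that the lemma ``can be easily verified by hand'' and omits the details, so your finite case analysis over $2n+1\in\{9,11,13,15,17,19,21\}$ is precisely what is being asked for. Your identification of the tight instances $(2n+1,s)\in\{(15,10),(21,7)\}$ (and their mirror images $(15,5),(21,14)$) where only one of the four candidate sets works is accurate and is the only place requiring care.
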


\subsection{Proof of Theorem~\ref{thm:star}}
\label{sec:star-proof}

We now have all ingredients in hand to prove Theorem~\ref{thm:star}.

\begin{proof}[Proof of Theorem~\ref{thm:star}]
By the scaling trick described in Section~\ref{sec:idea-shift}, it is enough to establish the theorem for each $n\geq 1$ for one particular value of~$s$ that is coprime to~$2n+1$.

For $n=1$ we can use the flip sequence $\alpha_0:=21$, which starting from $x_1:=100$ (first bit omitted) has a shift of~$s=1$.
For $n=2$ we can use $\alpha_0:=5135$, which starting from $x_1:=11000$ has a shift of~$s=1$.
For $n=3$ we can use $\alpha_0:=6253462135$, which starting from $x_1:=1110000$ has a shift of~$s=1$ (see the left hand side in Figure~\ref{fig:c44}).

For the rest of the proof we assume that~$n\geq 4$.
We consider the spanning tree~$\cT_n\subseteq\cH_n$ defined in Section~\ref{sec:Tn}.
As explained in Section~\ref{sec:first}, based on the spanning tree~$\cT_n$, we define a periodic path~$P$ with starting vertex~$x_1:=1^n0^{n+1}$ and second vertex~$f(x_1)$ in the middle levels graph~$M_n$, such that~$\neck{P}$ is a Hamilton cycle in the necklace graph~$N_n$, and the shift of the corresponding flip sequence $\alpha(P)$ is given by~\eqref{eq:total-shift-Tn}.
We denote this value by $s:=\lambda(\alpha(P))$.
If $s$ is coprime to~$2n+1$, we are done.
It remains to consider the case that $s$ is not coprime to~$2n+1$.

If $4\leq n\leq 10$, then we consider the switches $\tau_{n,1}$ and $\tau_{n,2}$, which are $f^{-1}$- and $f$-conformal, respectively, and usable w.r.t.\ to $G(\cT_n)$ by Lemma~\ref{lem:tau12}.
By Lemma~\ref{lem:switch}, their shifts are $\lambda(\tau_{n,1})=1$ and $\lambda(\tau_{n,2})=2$, respectively.
Consequently, by modifying the flip sequence~$\alpha(P)$ as described by~\eqref{eq:swalpha1} and~\eqref{eq:swalpha2} using one of the two or both switches, we obtain a flip sequence~$\alpha'$ that has shift
\begin{subequations}
\label{eq:new-shifts}
\begin{equation}
s':=\lambda(\alpha')=s+\chi_1\cdot\gamma_1\cdot 1+\chi_2\cdot\gamma_2\cdot 2
\end{equation}
for some signs $\gamma_1,\gamma_2\in\{-1,+1\}$ (recall~\eqref{eq:swlambda1} and~\eqref{eq:swlambda2} and the remarks at the beginning of Section~\ref{sec:number}), and with indicators $\chi_1,\chi_2\in\{0,1\}$ that are non-zero iff the corresponding switches are used.
If $s-1$ and $s+1$ are coprime to~$2n+1$, or if $s-1$ is coprime to~$2n+1$ and $\gamma_1=-1$, or if $s+1$ is coprime to~$2n+1$ and $\gamma_1=+1$, then we only use the switch~$\tau_{n,1}$, guaranteeing that $s'=s+\gamma_1$ is coprime to~$2n+1$.
Similarly, if $s-2$ and $s+2$ are coprime to~$2n+1$, or if $s-2$ is coprime to~$2n+1$ and $\gamma_2=-2$, or if $s+2$ is coprime to~$2n+1$ and $\gamma_2=+1$, then we only use the switch~$\tau_{n,2}$, guaranteeing that $s'=s+\gamma_2$ is coprime to~$2n+1$.
Otherwise, by Lemma~\ref{lem:coprime-small} we have that either $s-1$ and $s+2$ are coprime to~$2n+1$ and $(\gamma_1,\gamma_2)=(+1,-1)$, or $s+1$ and $s-2$ are coprime to~$2n+1$ and $(\gamma_1,\gamma_2)=(-1,+1)$, and then we use both switches~$\tau_{n,1}$ and~$\tau_{n,2}$.
In the first case $s'=s+1-2=s-1$ is coprime to~$2n+1$, and in the second case $s'=s-1+2=s+1$ is coprime to~$2n+1$.

If $n\geq 11$, then we distinguish three cases:
If $2n+1$ is a prime power, then $s$ is also a power of the same prime number.
We apply the switch~$\tau_{n,1}$, similarly to before, modifying the flip sequence~$\alpha(P)$ so that the resulting flip sequence~$\alpha'$ has shift
\begin{equation}
s'=\lambda(\alpha')=s+\gamma_1\cdot 1,
\end{equation}
for some $\gamma_1\in\{+1,-1\}$, and $s'=s\pm 1$ is coprime to~$2n+1$.

If $2n+1$ is not a prime power and $\cP(2n+1,s)\neq \emptyset$, then we define $d:=\prod_{p\in \cP(2n+1,s)}p$ and $c:=(2n+1)/d$ and we consider the switch~$\tau_{n,d,z}$ defined in Lemma~\ref{lem:taundz}, which is $f^{-1}$-conformal and usable w.r.t.~$G(\cT_n)$.
We apply the switch~$\tau_{n,d,z}$, modifying the flip sequence~$\alpha(P)$ so that the resulting flip sequence~$\alpha'$ has shift
\begin{equation}
\label{eq:spd}
s':=\lambda(\alpha')=s+\gamma_d\cdot d
\end{equation}
\end{subequations}
for some $\gamma_d\in\{+1,-1\}$, and $s'=s\pm d$ is coprime to~$2n+1$ by the first part of Lemma~\ref{lem:coprime-change}.

If $2n+1$ is not a prime power and $\cP(2n+1,s)=\emptyset$, then we pick some $d\in \cP(2n+1)$, define $c:=(2n+1)/d$ and we apply the switch~$\tau_{n,d,z}$, yielding a flip sequence~$\alpha'$ with shift~$s'$ given by~\eqref{eq:spd}, which satisfies $\cP(2n+1,s')\neq \emptyset$ by the second part of Lemma~\ref{lem:coprime-change}.
We then modify the flip sequence a second time as described in the previous case, and the switch used is distinct from the first one, as $d':=\prod_{p\in\cP(2n+1,s\pm d)}p=\prod_{p\in\cP(2n+1)\setminus \{d\}}p$ clearly satisfies $d'\neq d$.

This completes the proof of the theorem.
\end{proof}

The reader may wonder whether the proof presented before would work only with the switches~$\tau_{n,1}$ and~$\tau_{n,2}$, not using the switch $\tau_{n,d,z}$ at all.
However, the statement of Lemma~\ref{lem:coprime-small} becomes invalid for large enough values of~$n$.
The first counterexample is~$n=52$ with $2n+1=105=3\cdot 5\cdot 7$ and $s=5$, for which none of the three numbers $s-2=3$, $s+1=6$ and $s+2=7$ is coprime to~$2n+1$.

\section{Efficient computation}
\label{sec:algo}

The proof of Theorem~\ref{thm:star} presented in the previous section is constructive and translates directly into an algorithm for computing the corresponding flip sequence.
Recall from the discussion in Section~\ref{sec:idea-algo} that there is one major obstacle to make the algorithm efficient, and this is to compute the value of the shift of the flip sequence~$\alpha(P)$ after gluing, i.e., the number~$s:=\lambda(\alpha(P))$ (recall \eqref{eq:total-shift-Tn}), upon initialization of the algorithm.
Only with knowing the value of~$s$, we can decide whether it is coprime to~$2n+1$, and consequently which switches have to be applied in the course of the algorithm to modify~$s$.
Unfortunately, it is not clear how to compute the number~$s$ efficiently, as the signs $\gamma_T\in\{+1,-1\}$ in~\eqref{eq:total-shift-Tn} are determined by which gluing cycles are nested, and the number of cycles, equal to the number of plane trees in~$T_n$, is exponential in~$n$.

However, if we knew that $G(\cT_n)$ is not only interleaving-free, but also \emph{nesting-free}, then this would guarantee that all signs~$\gamma_T$ in~\eqref{eq:total-shift-Tn} are positive, giving
\begin{equation}
\label{eq:lambda-Cn}
s=\lambda(\alpha(P))=\sum_{T\in T_n} \lambda(T)=C_n,
\end{equation}
where $C_n$ is the $n$th Catalan number.
To see the identity used in the last step, recall that $\lambda(T)$ counts all rooted trees whose underlying plane tree is~$T$, so overall we count all rooted trees, which gives the sum~$|D_n|=C_n$.

Unfortunately, the spanning tree~$\cT_n$ defined in Section~\ref{sec:Tn} is not nesting-free in general.
Consequently, to implement the approach outlined before, in the following we define another spanning tree~$\cT_n$ of~$\cH_n$ such that $G(\cT_n)$ is both interleaving-free and nesting-free (see Lemma~\ref{lem:Tn-new} below).
This alternative definition of~$\cT_n$ is considerably more complicated than the one presented in Section~\ref{sec:Tn}.
All other ingredients of the construction will work essentially the same way.

\subsection{Redefinition of~$\cT_n$}
\label{sec:Tn-new}

We define the rooted trees
\begin{equation}
\label{eq:spires}
\begin{alignedat}{4}
q_0&:=10,& q_1&:=1100,& q_2&:=110100,& q_3&:=11100100, \\
q_4&:=11010100,&\quad q_5&:=1110100100,&\quad q_6&:=1110010100,&\quad q_7&:=1110011000, \\
q_8&:=1101011000,&\quad q_9&:=1101010100,& &&& \\
\end{alignedat}
\end{equation}
see Figure~\ref{fig:spire}.

For $n\geq 4$, we define a subgraph~$\cT_n$ of~$\cH_n$ as follows:
For every plane tree~$T\in T_n$ with $T\neq [s_n]$, we define a gluing pair~$(x,y)\in G_n$ with either $T=[x]$ or $T=[y]$.
We let $\cT_n$ be the spanning subgraph of~$\cH_n$ given by the union of arcs~$([x],[y])$ labeled~$(x,y)$ for all gluing pairs~$(x,y)$ obtained in this way.
The definition of the gluing pair~$(x,y)\in G_n$ for a given plane tree~$T\neq [s_n]$ proceeds in the following three steps (T1)--(T3), unless $n$ is odd and $T=[d_n]$, in which case the special rule~(D) is applied.

\textbf{(D) Dumbbell rule.} If $n$ is odd and $T=[d_n]$, we let $c$ be one of its centroids, which has exactly one $c$-subtree that is not a single edge, namely the tree~$s_{(n+1)/2}$.
The rightmost leaf~$a$ of it is thick and pushable to~$c$ in~$T$, so we define $y:=y(T,c,a)=d_n'$ and $x:=\push(y)$ as in Lemma~\ref{lem:pot-push}.

\textbf{(T1) Fix the centroid and subtree ordering.}
If $T$ has two centroids, we let $c$ denote the centroid whose active $c$-subtrees are not all single edges.
If this is true for both centroids, we let $c$ be the one for which all active $c$-subtrees $t_1,\ldots,t_k$, listed in ccw order such that $t_1$ is the first tree encountered after the $c$-subtree containing the other centroid, give the lexicographically minimal string $(t_1,\ldots,t_k)$.
\todo{We do not use this in the proofs yet, but this lexicographically minimal centroid is the same in the entire subtree~$\cS^2$.
This might be a useful property for our algorithms.}

If $T$ has a unique centroid, we denote it by~$c$.
We consider all $c$-subtrees of~$T$, and we denote them by $t_1,\ldots,t_k$, i.e., $T=[(t_1,\ldots,t_k)]$, such that among all possible ccw orderings of subtrees around~$c$, the string $(t_1,t_2,\ldots,t_k)$ is lexicographically minimal.

\textbf{(T2) Select $c$-subtree of~$T$.}
If $T$ has two centroids, we let $t_\hi$ be the first of the trees $t_1,\ldots,t_k$ that is distinct from~$q_0$.

If $T$ has a unique centroid, then for each of the following conditions (i)--(iv), we consider all trees $t_i$ for $i=1,\ldots,k$, and we determine the first tree $t_i$ satisfying the condition, i.e., we only check one of these conditions once all trees failed all previous conditions:
\begin{enumerate}[label=(\roman*),leftmargin=8mm,topsep=0mm]
\item $t_i=q_1$ and $t_{i-1}=q_0$,
\item $t_i\in\{q_2,q_4\}$ and $t_{i+1}\in\{q_0,q_1,q_2\}$,
\item $t_i\notin\{q_0,q_1,q_2,q_4\}$,
\item $t_i\neq q_0$.
\end{enumerate}
Conditions~(i) and~(ii) refer to the previous tree~$t_{i-1}$ and the next tree~$t_{i+1}$ in the ccw ordering of $c$-subtrees, and those indices are considered modulo~$k$.
Note that $T$ is not the star~$[s_n]$, and so at least one $c$-subtree of~$T$ is distinct from~$q_0$ and satisfies the last condition, so this rule to determine $t_i$ is well defined.
We let $t_\hi$ be the $c$-subtree determined in this way.
Clearly, $t_\hi$ has at least two edges.

\begin{figure}
\includegraphics[page=1]{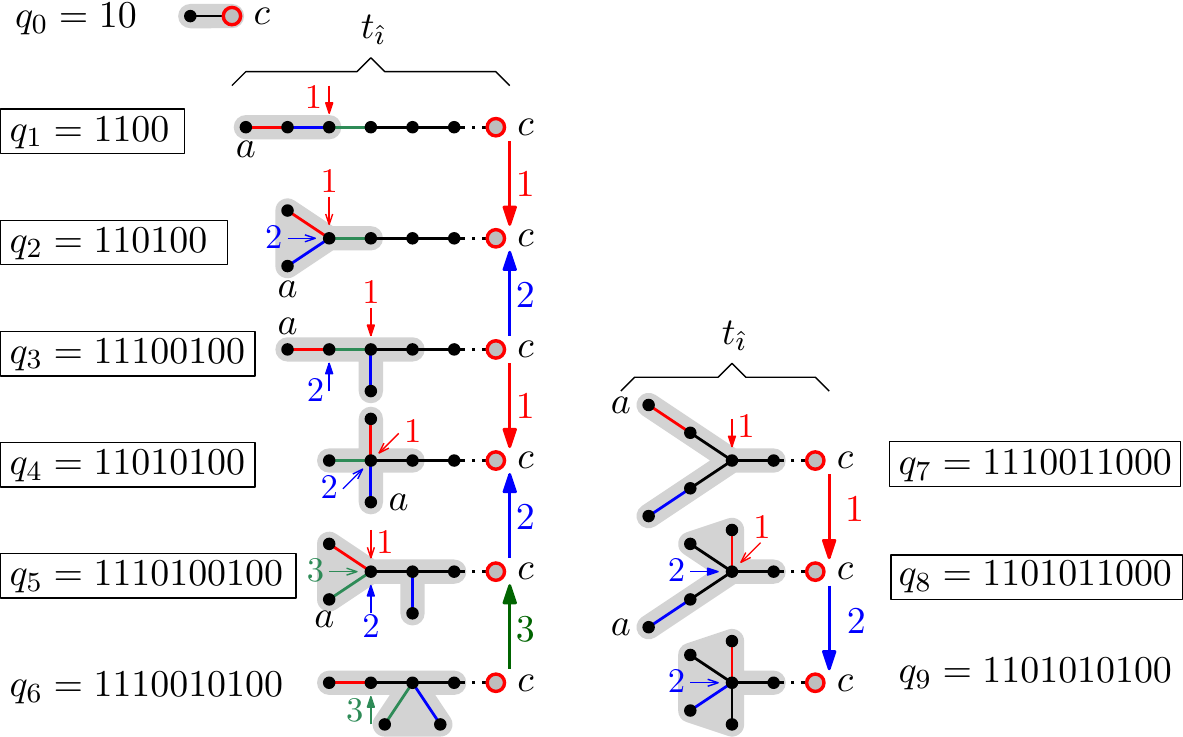}
\caption{Illustration of the trees $q_0,\ldots,q_9$ defined in~\eqref{eq:spires}, which are highlighted in gray, and pull/push operations between them.
In the spanning tree~$\cT_n$, every arc $([x],[y])$ is labeled with a gluing pair~$(x,y)$, and in the figure, the rooted trees~$x$ and~$y$ are obtained by rooting the plane trees~$[x]$ and~$[y]$ at the vertices indicated by the small arrows, which also show the splitting of the cyclic ordering of neighbors of this vertex to obtain the left-to-right ordering of the children of the root.
Moreover, the small arrow at~$[x]$ has a filled head, whereas the small arrow at~$[y]$ has an empty head.
For clarity, every arc and the corresponding two small arrows are marked by the same integer.
The framed trees $\{q_1,\ldots,q_5\}\cup\{q_7,q_8\}$ are treated by separate rules in step~(T2).
}
\label{fig:spire}
\end{figure}

\textbf{(T3) Select leaf to pull/push.}
If $t_\hi=1^l q_j 0^l$ for some $l\geq 0$ and $j\in\{1,\ldots,5\}\cup\{7,8\}$, i.e., $t_\hi$ is a path with one of the trees $q_1,\ldots,q_5$ or $q_7,q_8$ attached to it, then we distinguish four cases; see Figure~\ref{fig:spire}:
\begin{enumerate}[leftmargin=14mm,topsep=0mm]
\item[(q137)] If $j\in\{1,3,7\}$, then we let $a$ be the leftmost leaf of~$t_\hi$, which is thin, and define $x:=x(T,c,a)$ and $y:=\pull(x)$ as in Lemma~\ref{lem:pot-pull}.
Clearly, for $j=1$ we have $y=1^{l-1} q_2 0^{l-1}$ if $l>0$ and $y=q_0^2$ if $l=0$, for $j=3$ we have $y=1^l q_4 0^l$, and for $j=7$ we have $y=1^l q_8 0^l$.
\item[(q24)] If $j\in\{2,4\}$, then we let $a$ be the rightmost leaf of~$t_\hi$, which is thick, and we define $y:=y(T,c,a)$ and $x:=\push(y)$ as in Lemma~\ref{lem:pot-push}.
Clearly, for $j=2$ we have $x=1^{l-1} q_3 0^{l-1}$ if $l>0$ and $x=q_1q_0$ if $l=0$, and for $j=4$ we have $x=1^{l-1} q_5 0^{l-1}$ if $l>0$ and $x=q_2q_0$ if $l=0$.
\item[(q5)] If $j=5$, then we let $a$ be the unique leaf of~$t_\hi$ that is neither the leftmost nor the rightmost one, which is thick, and we define $y:=y(T,c,a)$ and $x:=\push(x)$ as in Lemma~\ref{lem:pot-push}.
We clearly have $x=1^l q_6 0^l$.
\item[(q8)] If $j=8$, then we let $a$ be the rightmost leaf of~$t_\hi$, which is thin, and define $x:=x(T,c,a)$ and $y:=\pull(x)$ as in Lemma~\ref{lem:pot-pull}.
We clearly have $y=1^l q_9 0^l$.
\end{enumerate}

Otherwise we distinguish two cases:
\begin{enumerate}[leftmargin=14mm,topsep=0mm]
\item[(e)] If the potential $\varphi(T)=\varphi(c)$ is even, we let $a$ be the leftmost leaf of~$t_\hi$ and define $x:=x(T,c,a)$ and $y:=\pull(x)$ as in Lemma~\ref{lem:pot-pull}.
\item[(o1)] If the potential $\varphi(T)=\varphi(c)$ is odd and the rightmost leaf~$a$ of~$t_\hi$ is thin, we define $x:=x(T,c,a)$ and $y:=\pull(x)$ as in Lemma~\ref{lem:pot-pull}.
\item[(o2)] If the potential $\varphi(T)=\varphi(c)$ is odd and the rightmost leaf~$a$ of~$t_\hi$ is thick, we define $y:=y(T,c,a)$ and $x:=\push(y)$ as in Lemma~\ref{lem:pot-push}.
\end{enumerate}

This completes the definition of~$\cT_n$.
In Lemma~\ref{lem:Tn-new} below we will show that~$\cT_n$ is indeed a spanning tree of~$\cH_n$.
The spanning trees $\cT_n\subseteq \cH_n$ for $n=4,5,6,7$ are shown in Figures~\ref{fig:t456} and~\ref{fig:t7}.

\begin{figure}
\includegraphics[page=2]{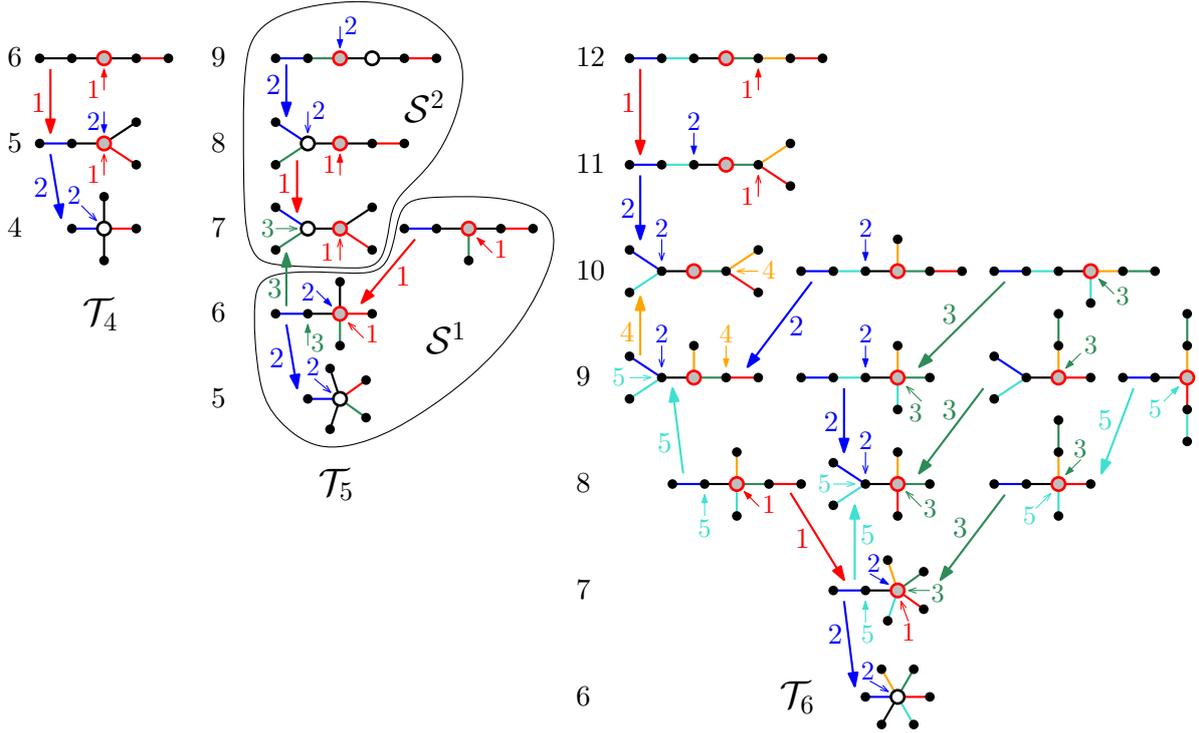}
\caption{Illustration of the spanning trees $\cT_4,\cT_5,\cT_6$.
The subgraphs~$\cS^1,\cS^2\subseteq \cT_n$ with all plane trees that have one or two centroids, respectively, are highlighted.
Centroid(s) are marked with bullets, where the centroid selected in step~(T1) is filled gray.
Plane trees are arranged in levels according to their potential, which is shown on the side.
The arrow markings are explained in Figure~\ref{fig:spire}.
}
\label{fig:t456}
\end{figure}

\begin{figure}
\includegraphics[page=3]{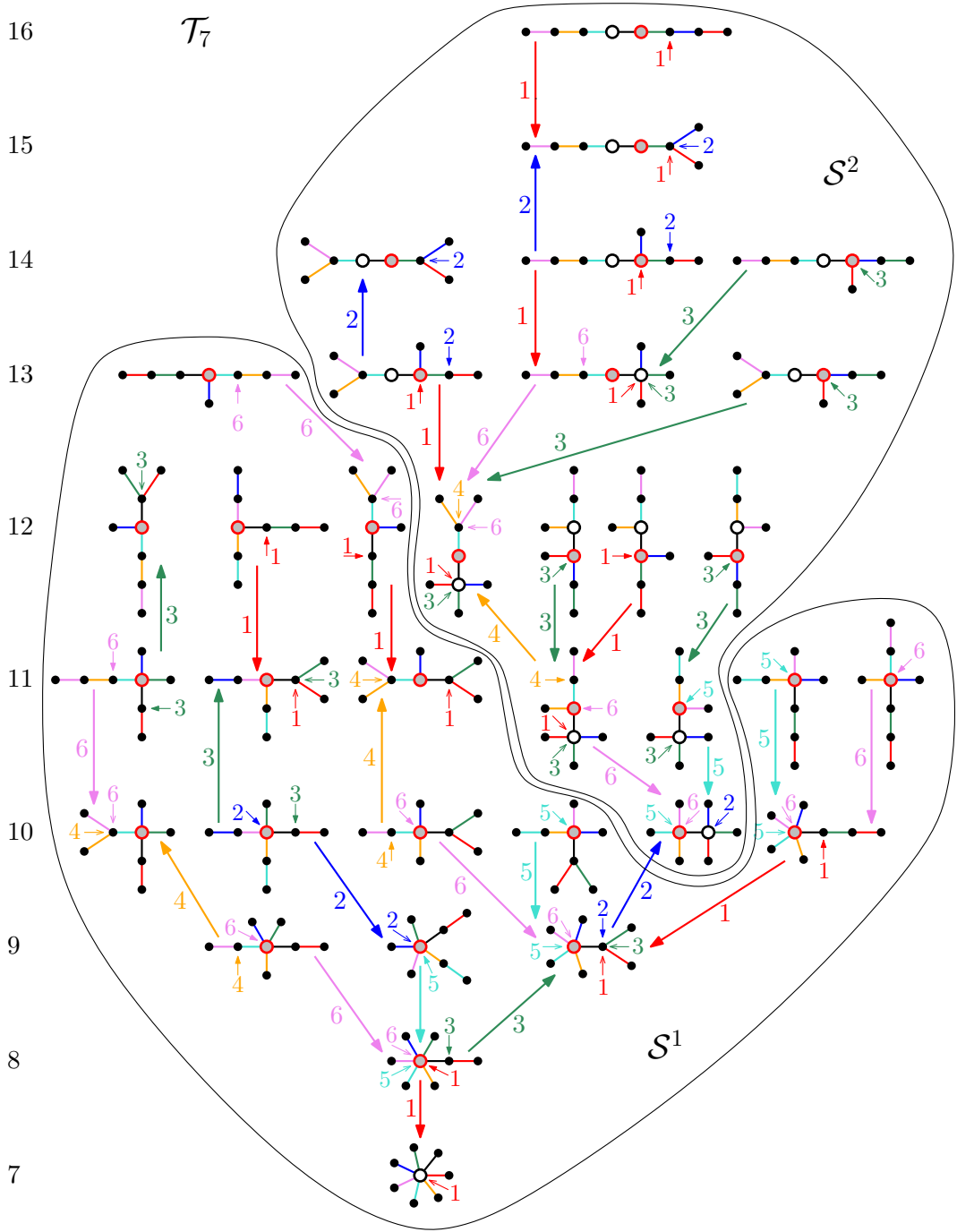}
\caption{Illustration of the spanning tree $\cT_7$.
Notation is as in Figure~\ref{fig:spire}.}
\label{fig:t7}
\end{figure}

In the following, we refer to rules~(q137), (q8), (e), and~(o1) in step~(T3) as \emph{pull rules}, and to rules~(q24), (q5), and~(o2) as \emph{push rules}.
Note that the leaf to which one of the pull rules~(q137), (q8) or~(o1) is applied is always thin, whereas the leaf to which any push rule is applied is always thick.

\subsection{Properties of~$\cT_n$}

The main task of this section is to prove that~$\cT_n$ is a spanning tree of~$\cH_n$ for which $G(\cT_n)$ is interleaving-free and nesting-free (Lemma~\ref{lem:Tn-new} below).
The following lemma is an auxiliary statement that will be used in that proof.

\begin{lemma}
\label{lem:circular}
If $T$ has a unique centroid~$c$, then the $c$-subtree $t_\hi$ selected in step~(T2) satisfies the following conditions:
\begin{enumerate}[label=(\alph*),leftmargin=8mm]
\item If $t_\hi=q_1$, then $t_{\hi-1}=q_0$ or $t_1=t_2=\cdots=t_k=q_1$.
\item If $t_\hi\in\{q_2,q_4\}$, then $t_{\hi+1}\in\{q_0,q_1,q_2\}$ or $t_1=t_2=\cdots=t_k=q_4$.
\end{enumerate}
\end{lemma}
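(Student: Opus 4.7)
The plan is to argue by case analysis on which of the four rules (i)--(iv) in step (T2) actually selected the subtree $t_\hi$, using throughout that rule~(iv) is invoked only when none of rules (i), (ii), (iii) is applicable at any index $j \in \{1,\ldots,k\}$.

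For part~(a), I would first note that $t_\hi = q_1$ excludes rule~(ii) (which requires $t_\hi \in \{q_2,q_4\}$) and rule~(iii) (which requires $t_\hi \notin \{q_0,q_1,q_2,q_4\}$), so $t_\hi$ was chosen by rule~(i) or rule~(iv). If it was rule~(i), then $t_{\hi-1}=q_0$ directly. Otherwise rule~(iv) was invoked, so failure of (iii) at every index forces $t_j \in \{q_0,q_1,q_2,q_4\}$ for all $j$. Failure of (ii) at every $j$ then says that whenever $t_j \in \{q_2,q_4\}$, one has $t_{j+1}\notin\{q_0,q_1,q_2\}$; combined with $t_{j+1}\in\{q_0,q_1,q_2,q_4\}$ this forces $t_{j+1}=q_4$. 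Iterating cyclically around the $t_j$'s, the presence of any $q_2$ or $q_4$ would propagate to make all $t_j=q_4$, contradicting $t_\hi=q_1$. Hence $t_j\in\{q_0,q_1\}$ for all $j$. Finally, failure of (i) at every index means there is no cyclic transition $t_{j-1}=q_0$, $t_j=q_1$; since $t_\hi=q_1$, walking backwards from $\hi$ every predecessor must again be $q_1$, and this propagates cyclically to $t_1=\cdots=t_k=q_1$ (any occurrence of $q_0$ would force some $q_0\to q_1$ transition in the cycle).

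For part~(b), $t_\hi \in \{q_2,q_4\}$ excludes rule~(iii), so $t_\hi$ was chosen by rule~(ii) or rule~(iv). If it was rule~(ii), then $t_{\hi+1}\in\{q_0,q_1,q_2\}$ directly. Otherwise rule~(iv) was invoked, and the same reasoning as in part~(a) shows $t_j\in\{q_0,q_1,q_2,q_4\}$ for all $j$ and $t_{j+1}=q_4$ whenever $t_j\in\{q_2,q_4\}$. Starting from $t_\hi\in\{q_2,q_4\}$ and propagating forward cyclically gives $t_{\hi+1}=t_{\hi+2}=\cdots=q_4$, which upon wrapping around yields $t_1=\cdots=t_k=q_4$.

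The arguments are elementary, and the main thing to be careful about is the cyclic indexing: specifically, treating indices modulo $k$ so that the propagation $t_j\in\{q_2,q_4\}\Rightarrow t_{j+1}=q_4$ and the transition argument for $q_0\to q_1$ do indeed close up into a full cycle rather than stopping at an artificial endpoint. Once that is handled cleanly, both parts reduce to a few lines of verification.
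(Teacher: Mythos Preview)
Your proposal is correct and follows essentially the same approach as the paper's proof: both argue by cases on which rule (i)--(iv) selected $t_{\hi}$, and in the rule~(iv) case both exploit the simultaneous failure of (i), (ii), (iii) at every index to propagate constraints cyclically. The only difference is cosmetic: in part~(a) the paper argues directly backward from $t_i=q_1$ (failure of (i) gives $t_{i-1}\neq q_0$, failure of (ii) at $i{-}1$ gives $t_{i-1}\notin\{q_2,q_4\}$, failure of (iii) then forces $t_{i-1}=q_1$), whereas you first eliminate $q_2,q_4$ globally via forward propagation and then use failure of~(i); both routes reach the same conclusion.
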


\begin{proof}
We first prove~(a).
Among the conditions (i)--(iv) that are checked in step~(T2), only conditions~(i) and~(iv) lead to selecting a $c$-subtree that is isomorphic to~$q_1$ for~$t_\hi$.
If condition~(i) holds, then we clearly have $t_{\hi-1}=q_0$.
If condition~(iv) applies, then all $c$-subtrees $t_1,\ldots,t_k$ of~$T$ failed all previous conditions~(i)--(iii).
In particular, from~(i) we know that $t_i=q_1$ implies that $t_{i-1}\neq q_0$.
Moreover, from~(ii) we obtain that $t_{i-1}\notin\{q_2,q_4\}$.
Combining this with~(iii) shows that $t_{i-1}=q_1$.
This proves part~(a) of the lemma.

It remains to prove part~(b).
Among the conditions (i)--(iv), only conditions~(ii) and~(iv) lead to selecting a $c$-subtree that is isomorphic to~$q_2$ or~$q_4$ for~$t_\hi$.
If condition~(ii) holds, then we clearly have $t_{\hi+1}\in\{q_0,q_1,q_2\}$.
If condition~(iv) applies, then all $c$-subtrees $t_1,\ldots,t_k$ of~$T$ failed all previous conditions~(i)--(iii).
In particular, from~(ii) we know that $t_i\in\{q_2,q_4\}$ implies that $t_{i+1}\notin\{q_0,q_1,q_2\}$.
Combining this with~(iii) shows that $t_{i+1}=q_4$.
This proves part~(b) of the lemma.
\end{proof}

\begin{lemma}
\label{lem:Tn-new}
For any $n\geq 4$, the graph~$\cT_n$ is a spanning tree of~$\cH_n$, and for every arc $(T,T')$ in~$\cT_n$ we either have $\varphi(T')=\varphi(T)-1$ or $\varphi(T)=\varphi(T')-1$.
Every plane tree $T$ other than the star~$[s_n]$ has exactly one neighbor~$T'$ in~$\cT_n$ with $\varphi(T')=\varphi(T)-1$, which is an out-neighbor or in-neighbor.
Furthermore, $G(\cT_n)$ is interleaving-free and nesting-free.
\end{lemma}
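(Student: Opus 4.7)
The plan is to establish the four claims of Lemma~\ref{lem:Tn-new} roughly in the order stated. First I would verify that each arc of~$\cT_n$ shifts the potential by exactly one. By construction, every plane tree $T \neq [s_n]$ is assigned a gluing pair $(x,y) \in G_n$ via one of the pull rules (q137), (q8), (e), (o1) — in which case $T = [x]$, $y = \pull(x)$, and Lemma~\ref{lem:pot-pull} gives $\varphi([y]) = \varphi(T) - 1$ — or via one of the push rules (q24), (q5), (o2), and (D), in which case $T = [y]$, $x = \push(y)$, and Lemma~\ref{lem:pot-push} gives $\varphi([x]) = \varphi(T) - 1$. In both cases the assigned arc connects $T$ to a unique tree of potential $\varphi(T) - 1$, either as an out-neighbor (pull) or as an in-neighbor (push).

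For the spanning tree claim, note that $\cT_n$ has $|T_n| - 1$ arcs, one per non-star tree. Any other arc incident to $T$ in $\cT_n$ belongs to some other tree $T'$ whose assigned arc likewise points to $T'$'s lower-potential neighbor; the same dichotomy then forces $\varphi(T') = \varphi(T) + 1$, so $T$ has at most one neighbor of lower potential in $\cT_n$, proving uniqueness. Iterating the potential-decreasing neighbor from any $T$ yields a directed path terminating at $[s_n]$, so $\cT_n$ is connected; and since no two distinct arcs can share an endpoint pair (this would force two mutually potential-decreasing relationships, a contradiction), $\cT_n$ is a simple subgraph with $|T_n| - 1$ edges, hence a spanning tree.

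Next I would dispatch interleaving-freeness via Lemma~\ref{lem:heavy}, which reduces the task to showing that for every $(x,y) \in G(\cT_n)$ the root of the rooted tree~$x$ is not a leaf. For pull rules, $x = T^{(a'', a')}$ is rooted at $a'' = p^2(a, c)$, which either equals the centroid~$c$ (of degree $\geq 2$ for $n \geq 4$) or is an interior vertex of the path from~$a$ to~$c$ with neighbors on both sides; either way $a''$ has degree $\geq 2$, and rooting there yields a non-leaf root. For push rules, $x = \push(y)$ where $y = T^{(a', a)}$ with $d(a, c) \geq 2$, so $a'$ has at least the two neighbors $a, a''$ in $T$; moreover, in every push subcase (q24), (q5), (o2), (D) the selected thick leaf~$a$ has $a'$ of degree $\geq 3$ in~$T$, so the string~$v$ in the decomposition $x = 1\,1\,0\,u\,0\,v$ is nonempty and the root of~$x$ is not a leaf.

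The main obstacle is nesting-freeness, which by Proposition~\ref{prop:Cxy}(iii) forbids any two gluing pairs $(x, y), (\hx, \hy) \in G(\cT_n)$ with $\hx = \rho^{-1}(y)$ — equivalently, pulling the same edge of a plane tree twice in succession (cf.\ Figure~\ref{fig:nested}). My plan is to argue by contradiction: assuming such a nested pair exists, the plane tree $[y] = [\hx]$ would have to be assigned the gluing pair $(\hx, \hy) = (\rho^{-1}(y), \pull(\rho^{-1}(y)))$ by the rules (T1)-(T3). The role of the special trees $q_0, \ldots, q_9$ and the carefully chosen subcases (q137), (q24), (q5), (q8) is precisely to intercept these situations: each identifies the local neighborhood that would arise just after a pull and instructs the rule to either select a different leaf or to switch from pull to push, thereby breaking the would-be nesting. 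I would then proceed by a case analysis over which rule produced the first pull $(x, y)$, determine the local structure of $[y]$ around the pulled edge, invoke the centroid and subtree choices from (T1) and (T2), and verify in each case that the prescribed rule for $[y]$ does not reproduce $\rho^{-1}(y)$ as its pullable tree; the analogous situation for a push step would also need to be ruled out. I expect the trickiest parts to be the subcases where a pull produces a new centroid configuration matching one of the $q_j$ shapes, and where the ccw ordering and lexicographic tie-breaking in (T1) governs which centroid and which $c$-subtree is chosen for $[y]$.
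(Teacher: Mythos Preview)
Your treatment of the potential-change, spanning-tree, and interleaving-free claims matches the paper's proof closely and is correct. The real issue is your setup for nesting-freeness.

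You write that if a nested pair $(x,y),(\hx,\hy)$ with $\hx=\rho^{-1}(y)$ exists, then ``the plane tree $[y]=[\hx]$ would have to be assigned the gluing pair $(\hx,\hy)$ by the rules (T1)--(T3).'' This is not true in general, and it is the crux of the difficulty. Recall that in the definition of~$\cT_n$ each plane tree $T\neq[s_n]$ contributes one gluing pair $(x,y)$ with either $T=[x]$ (pull rules) or $T=[y]$ (push rules). Consequently the pair $(\hx,\hy)$ may have been contributed by $[\hy]$ via a push rule rather than by $[\hx]$, and likewise $(x,y)$ may have been contributed by $[y]$ via a push rule rather than by $[x]$ via a pull. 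Since $[y]=[\hx]$ is a single tree and contributes only one arc, exactly three configurations survive; the paper organises them via the potentials of $T:=[y]=[\hx]$, $T':=[\hy]$, $T'':=[x]$, obtaining Cases~2a ($\varphi(T')=\varphi(T)+1$, $\varphi(T'')=\varphi(T)-1$), 2b ($\varphi(T'')=\varphi(T)+1$, $\varphi(T')=\varphi(T)-1$), and~2c ($\varphi(T')=\varphi(T'')=\varphi(T)+1$), plus a short Case~1 for $u'=\varepsilon$. Your plan covers only Case~2b (the one where $T$ itself assigns $(\hx,\hy)$ via a pull), and your closing remark about ``the analogous situation for a push step'' does not anticipate Case~2c, in which \emph{neither} gluing pair is assigned by $T$: here $(x,y)$ comes from $T''$ via a pull rule and $(\hx,\hy)$ comes from $T'$ via a push rule, and one must simultaneously analyse the rule applied at $T''$ and the rule applied at $T'$, using that their selected centroids coincide and lie in the subtree~$v$.

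In short, the correct top-level case split is not ``which rule produced $(x,y)$'' but ``which of $T,T',T''$ contributed each of the two arcs,'' equivalently the potential relations among them. Only after fixing that do the rule-by-rule arguments (q137), (q24), (q5), (q8), (e), (o1), (o2) and Lemma~\ref{lem:circular} come into play, and each of Cases~2a--2c further splits into subcases according to whether the trees lie in the one-centroid or two-centroid part of~$\cT_n$, with separate handling of the dumbbell arc between them.
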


\begin{proof}
Consider a gluing pair~$(x,y)\in G(\cT_n)$ added for a plane tree~$T$ with $T=[x]$.
By Lemma~\ref{lem:pot-pull} we have $\varphi(y)=\varphi(x)-1$, i.e., the potential of the trees changes by~$-1$ along this arc.
On the other hand, consider a gluing pair~$(x,y)$ added for a plane tree~$T$ with $T=[y]$.
By Lemma~\ref{lem:pot-push} we have $\varphi(x)=\varphi(y)-1$, i.e., the potential of the trees changes by~$+1$ along this arc.
It follows that in~$\cT_n$, every plane tree~$T$ other than the star~$[s_n]$ has exactly one neighbor~$T'$ with $\varphi(T')=\varphi(T)-1$, which is an out-neighbor or in-neighbor.
Consequently, $\cT_n$ has no cycles, regardless of the orientation of arcs along the cycle (in particular, there are no loops).
As from every plane tree~$T\in T_n$ other than~$[s_n]$, we can reach a tree~$T'$ with $\varphi(T')=\varphi(T)-1$, there is a path from~$T$ to the star~$[s_n]$, which is the unique plane tree with minimum potential~$n$.
We showed that $\cT_n$ does not contain cycles and is connected, i.e., it is a spanning tree.

We now show that~$G(\cT_n)$ is interleaving-free.
By Lemma~\ref{lem:pot-pull}, for any gluing pair $(x,y)\in G(\cT_n)$ with $\varphi(y)=\varphi(x)-1$ the right subtree of~$x$ contains a centroid of~$x$.
As a centroid is never a leaf, the right subtree of~$x$ contains edges, i.e., the root of~$x$ is not a leaf.
For any gluing pair $(x,y)\in G(\cT_n)$ with $\varphi(x)=\varphi(y)-1$, as $y$ is obtained from~$x$ by a push rule, the pushed leaf in~$y$ is thick, i.e., the right subtrees of~$y$ and~$x$ contain edges, and hence the root of~$x$ is not a leaf.
We can thus apply Lemma~\ref{lem:heavy} to conclude that $G(\cT_n)$ is interleaving-free.

The remainder of the proof is devoted to showing that~$G(\cT_n)$ is nesting-free.

We let $\cS^1$ and $\cS^2$ denote the subgraphs of~$\cT_n$ induced by all plane trees with a unique centroid, or with two centroids, respectively.
By Lemma~\ref{lem:centroid}, $\cS^1=\cT_n$ and $\cS^2=\emptyset$ for even~$n$, whereas $\cS^1$ and $\cS^2$ are both nonempty for odd~$n$; see Figures~\ref{fig:t456} and~\ref{fig:t7}.
For any plane tree~$T\neq [s_n]$ in $\cT_n$, consider the tree~$T'$ with $\varphi(T')=\varphi(T)-1$ that is connected to~$T$ in~$\cT_n$.
By Lemmas~\ref{lem:pot-pull} and~\ref{lem:pot-push}, if $T$ has a unique centroid, then $T'$ also has a unique centroid.
Similarly, if $T$ has two centroids, then $T'$ also has two centroids, unless $n$ is odd and $T=[d_n]=[d_n']$, in which case $T'=[\push(d_n')]$ has a unique centroid.
Consequently, $\cS^1$ and $\cS^2$ are subtrees of~$\cT_n$, and for odd~$n$ the subtree~$\cS^2$ is nonempty and $\cS^1$ and~$\cS^2$ are connected via the arc~$([\push(d_n')],[d_n'])$.

The following arguments are illustrated in Figure~\ref{fig:nest-free}.
Suppose for the sake of contradiction that~$G(\cT_n)$ is not nesting-free.
Then by Proposition~\ref{prop:Cxy}~(iii), there are gluing pairs $(x,y),(\hx,\hy)\in G(\cT_n)$ with $\hx=\rho^{-1}(y)$.
We consider the plane trees~$T:=[\hx]=[y]$, $T':=[\hy]$, and $T'':=[x]$.
We let $a$ denote the leaf in which~$\hx$ and~$\hy$ differ, which is also the leaf in which~$x$ and~$y$ differ.
Moreover, we let $b$ denote the root of~$\hx$, $b'$ the root of~$y$, and $b''$ the leftmost child of the root of~$x$.
As $\hx$ and~$x$ are pullable trees, we have $\hx=1\,1\,0\,u'\,0\,w$ and $x=1\,1\,0\,u\,0\,v'$ for $u',w,u,v'\in D$ (recall~\eqref{eq:gluing}).
Combining these relations with $\hx=\rho^{-1}(y)$ shows that if $u'=\varepsilon$, then we have $u=w$ and $v'=\varepsilon$ (in particular, $b=b''$), whereas if $u'\neq \varepsilon$, then we have $u'=1\,u\,0\,v$ and $v'=v\,1\,w\,0$.
The vertex identifiers~$a,b,b',b''$ and the subtree identifiers~$u,v,w$ apply to the rooted trees~$x,y,\hx,\hy$, but also to the plane trees~$T,T',T''$.
Note that $\hx=T^{(b,b')}$, $y=T^{(b',a)}$, $\hy=T'^{(b,a)}$, and $x=T''^{(b',b'')}$.
We let $c,c',c''$ denote the centroids of $T,T',T''$, respectively, selected in step~(D) or~(T1), and we let $t,t',t''$ denote the subtrees selected in step~(D) or~(T2).

\begin{figure}
\includegraphics[page=4]{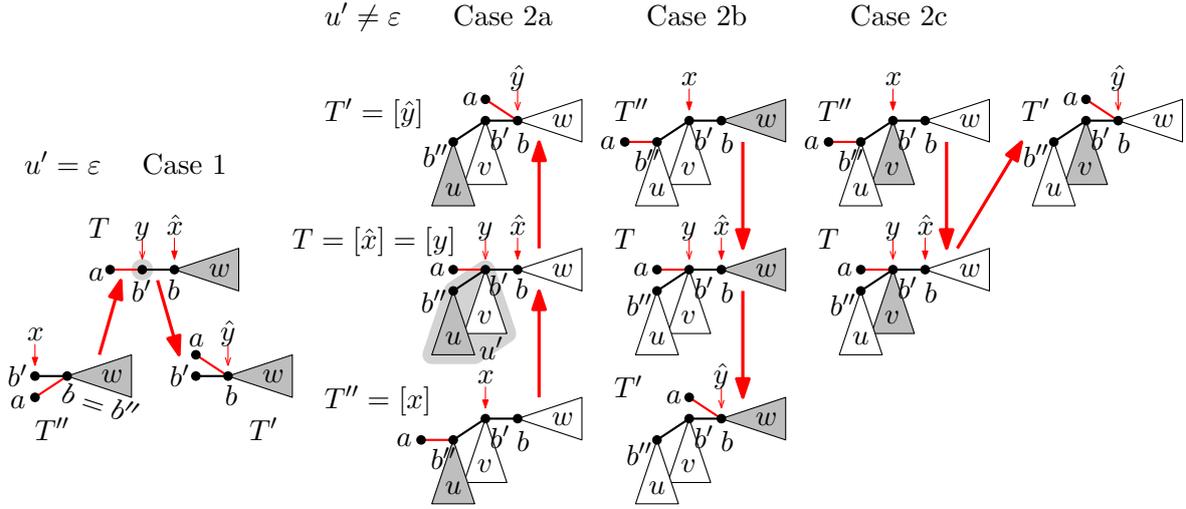}
\caption{Notations used in the proof that~$G(\cT_n)$ is nesting-free.
The gray subtrees contain the centroid(s).
}
\label{fig:nest-free}
\end{figure}

\textbf{Case 1:} We first consider the case $u'=\varepsilon$.
In this case the leaf~$a$ of~$T=[\hx]=[y]$ is thin, and so both $T'=[\hy]=[\pull(\hx)]$ and $T''=[x]=[\push(y)]$ have smaller potential than~$T$, i.e., we have $\varphi(T')=\varphi(T'')=\varphi(T)-1$.
This is impossible however, as $T$ has only a single neighbor in~$\cT_n$ with potential~$\varphi(T)-1$.

We now consider the case $u'\neq \varepsilon$.

\textbf{Case 2a:} $\varphi(T')=\varphi(T)+1$ and $\varphi(T'')=\varphi(T)-1$.
In this case, the leaf $a$ is thick and pushable to~$c'$ in~$T'$, implying that $w\neq\varepsilon$, and the leaf~$a$ is thick and pushable to~$c$ in~$T$.

\textit{Subcase 2a(i):} $T,T',T''\in\cS^1$ or $T,T',T''\in\cS^2$.
By Lemmas~\ref{lem:pot-pull} and~\ref{lem:pot-push}, the centroid(s) of $T,T',T''$ are identical.
As $a$ is pushable to~$c$ and in an active $c$-subtree in~$T$, the centroid(s) of~$T$ are in~$u$, in particular, $c$ is in~$u$.
As $a$ is in an active $c'$-subtree of~$T'$, we must have $c'=c$.

As $\hx$ is obtained from~$\hy$ by a push applied to the leaf~$a$, one of the push rules~(q24), (q5) or~(o2) in step~(T3) applies to~$t'$ in~$T'$.

However, rule~(q5) does not apply, as the rightmost leaf of~$q_5$ is missing in~$t'$.

If rule~(q24) applies to~$t'$, i.e., $t'$ is a path with $q_2$ or $q_4$ attached to it (in particular, $v=\varepsilon$), then $t$ is a path with $q_3$ or~$q_5$ attached to it, respectively, i.e., rule~(q137) or~(q5) apply to~$t$ in~$T$.
However, rule~(q137) is a pull rule, not a push rule, a contradiction.
If rule~(q5) applies to~$t$, then this rule applies a push to a leaf in~$w$, but not to~$a$, a contradiction.

If rule~(o2) applies to~$t'$ in~$T'$, i.e., $\varphi(T')$ is odd and $a$ is the rightmost leaf of~$t'$, then $\varphi(T)=\varphi(T')-1$ is even, so rule~(o2) does not apply to~$t$ in~$T$.
It remains to check that none of the push rules~(q24) or~(q5) applies to~$t$, either.
Rule~(q5) does not apply, as the rule does not push~$a$, which is the rightmost leaf of~$t$, but rather a leaf that is neither the rightmost nor the leftmost leaf of~$t$.
Rule~(q24) does not apply either, as $w\neq \varepsilon$.
In each case, we arrive at a contradiction.

\textit{Subcase 2a(ii):} $n$ is odd, $T=[d_n]\in\cS^2$ and $T''=[\push(d_n')]\in\cS^1$.
By rule~(D), it suffices to consider the case that the centroids of~$T$ are~$b'$ and~$b''$ and $c=b''$, i.e., we have $u=q_0^{(n-1)/2}$, $v=q_0^{(n-5)/2}$, and $w=\varepsilon$.
Then we have $c'=b'$ and rule~(q137) of step~(T3) applies to the $c'$-subtree $1w100=1100=q_1$ of~$T'$, but this is a pull rule, not a push rule, a contradiction.

\textit{Subcase 2a(iii):} $n$ is odd, $T'=[d_n]\in\cS^2$ and $T=[\push(d_n')]\in\cS^1$.
By rule~(D), it suffices to consider the case that the centroids of~$T'$ are $b$ and~$b'$ and $c'=b'$, i.e., we have $u=\varepsilon$ and $v=w=q_0^{(n-3)/2}$.
Then $c=b'$ is the unique centroid of~$T$, and the leaf~$a$ is not pushable to~$c$ in~$T$, a contradiction.

\textbf{Case 2b:} $\varphi(T'')=\varphi(T)+1$ and $\varphi(T')=\varphi(T)-1$.
In this case, the leaf $a$ is pullable to~$c''$ in~$T''$, and the leaf~$a$ is thick (due to the vertex~$b''$) and pullable to~$c$ in~$T$.

\textit{Subcase 2b(i):} $T,T',T''\in\cS^1$ or $T,T',T''\in\cS^2$.
By Lemmas~\ref{lem:pot-pull} and~\ref{lem:pot-push}, the centroid(s) of $T,T',T''$ are identical.
As $a$ is pullable to~$c$ and in an active $c$-subtree in~$T$, the centroid(s) of~$T$ are in~$w$, in particular, $c$ is in~$w$.
As $a$ is in an active $c''$-subtree of~$T''$, we must have $c''=c$.

As $y$ is obtained from~$x$ by a pull applied to the leaf~$a$, one of the pull rules~(q137), (q8), (e) or~(o1) in step~(T3) applies to~$t''$ in~$T''$.

However, rule~(q8) does not apply, as the leftmost leaf of~$q_8$ is missing in~$t''$.

If rule~(q137) applies to~$t''$, i.e., $t''$ is a path with $q_1$, $q_3$, or $q_7$ attached to it (in particular, $u=\varepsilon$), then $t$ is a path with $q_2$, $q_4$, or $q_8$ attached to it, respectively, i.e., rule~(q24) or~(q8) apply to~$t$ in~$T$.
However, rule~(q24) is a push rule, not a pull rule, a contradiction.
Also, rule~(q8) applies a pull to a leaf in~$v$, but not to~$a$, a contradiction.

If rule~(e) applies to~$t''$ in~$T''$, i.e., $\varphi(T'')$ is even and $a$ is the leftmost leaf of~$t''$, then $\varphi(T)=\varphi(T'')-1$ is odd, so rule~(e) does not apply to~$t$ in~$T$.
As $a$ is not the rightmost leaf of~$t$ in~$T$ due to the edge~$(b',b'')$, rule~(o1) does not apply to~$t$, either.
Moreover, none of the remaining pull rules~(q137) or~(q8) apply to~$t$, as they only apply to thin leaves, whereas $a$ is thick in~$T$.
We arrive at a contradiction.

If rule~(o1) applies to~$t''$ in~$T''$, i.e., $\varphi(T'')$ is odd and $a$ is the rightmost leaf of~$t''$, then $\varphi(T)=\varphi(T'')-1$ is even, so rule~(o1) does not apply to~$t$ in~$T$.
None of the pull rules~(q137) or~(q8) apply to~$t$ due to the fact that $a$ is thick in~$T$, as argued before.
Suppose that rule~(e) applies to $t$ in~$T$, i.e., $a$ is the leftmost leaf of~$t$.
However, if $a$ is the rightmost leaf of~$t''$ and the leftmost leaf of~$t$, then as $c''=c$ we obtain that $t$ is a path with $q_2$ attached to it, in which case the push rule~(q24) applies to~$t$, a contradiction.

\textit{Subcase 2b(ii):} $n$ is odd and $T''=[d_n]$ or $T=[d_n]$.
These cases are impossible as the tree~$[d_n]$ has an incoming arc from the tree $[\push(d_n')]$ with lower potential that it is connected to in~$\cT_n$, and no outgoing arcs to any such tree.

\textbf{Case 2c:} $\varphi(T')=\varphi(T'')=\varphi(T)+1$.
In this case, the leaf $a$ is pullable to~$c''$ in~$T''$, and the leaf~$a$ is thick and pushable to~$c'$ in~$T'$.

\textit{Subcase 2c(i):} $T,T',T''\in\cS^1$ or $T,T',T''\in\cS^2$.
By Lemmas~\ref{lem:pot-pull} and~\ref{lem:pot-push}, the centroid(s) of $T,T',T''$ are identical.
As $a$ is pullable to~$c''$ and in an active $c''$-subtree in~$T''$, the centroid(s) of~$T''$ are in~$v$ or~$w$.
Moreover, as $a$ is pushable to~$c'$ in~$T'$ and in an active $c'$-subtree of~$T'$, the centroid(s) of~$T'$ are in~$v$ or~$u$.
Combining these observations shows that the centroid(s) are in~$v$ and $c'=c''$.
As $y$ is obtained from~$x$ by a pull applied to the leaf~$a$, one of the pull rules~(q137), (q8), (e) or~(o1) in step~(T3) applies to~$t''$ in~$T''$.

In the following we first assume that the centroid is unique, i.e., $T,T',T''\in\cS^1$, and subsequently we explain how to modify these arguments if~$T,T',T''\in\cS^2$.

If rule~(q137) applies to~$t''$, then due to the edge~$(b',b)$, we must have $u=\varepsilon$, $c''=b'$, and $t''=q_1$.
Using Lemma~\ref{lem:circular}~(a), it follows that $w=\varepsilon$, or $n$ is even and all $c''$-subtrees of~$T''$ are copies of~$q_1$.
In the first case, the leaf~$a$ of~$T'$ is thin, a contradiction.
In the second case, we have $w=q_0$ and $v=q_1^{(n-4)/2}$.
If $n=4$, then $b$ is the unique centroid of~$T'$, and not $c'=b'$, a contradiction.
If $n\geq 6$, then $v$ consists of at least one copy of~$q_1$, and in~$T'$, rule~(i) in step~(T2) applies to the $c'$-subtree given by the leftmost such copy (as $u=\varepsilon$), and this rule has higher priority than rule~(ii) that selects the $c'$-subtree $1w100=110100=q_2$, a contradiction.

If rule~(q8) applies to~$t''$ in~$T''$, i.e., $t''$ is a path with $q_8$ attached to it, then we have $w=\varepsilon$, i.e., $a$ is thin in~$T'$, a contradiction.

If rule~(e) applies to~$t''$ in~$T''$, i.e., $\varphi(T'')$ is even and $a$ is the leftmost leaf of~$t''$, then for $a$ to be leftmost in~$t''$, we must have $c''=b'$ due to the edge~$(b',b)$.
Also we have $u\notin\{\varepsilon,q_0,q_0^2\}$, otherwise the $c''$-subtree $110u0$ would be equal to $q_1,q_2$, or $q_4$, respectively, and then rule~(q137) or~(q24) would apply to~$t''$ instead of rule~(e).
Clearly, the push rule~(o2) does not apply to~$t'$ in~$T'$, as $\varphi(T')=\varphi(T'')$ is even.
The push rule~(q5) does not apply to~$t'$ either, as this rule would apply a pull to a leaf in~$w$, and not to~$a$.
If the push rule~(q24) applies to~$t'$, then we have $1w100\in\{q_2,q_4\}$, i.e., $w=q_0$ or $w=q_0^2$.
By Lemma~\ref{lem:circular}~(b), the ccw next $c'$-subtree of $1w100$ in~$T'$, namely the tree~$1u0$, is from $\{q_0,q_1,q_2\}$, or $n$ is a multiple of~4 and all $c'$-subtrees of~$T'$ are isomorphic to~$q_4$.
In the first case, we get $u\in\{\varepsilon,q_0,q_0^2\}$, a contradiction to the conditions on~$u$ derived before.
In the second case, we have $w=q_0^2$, i.e., the $c''$-subtree $1w0$ of~$T''$ is isomorphic to~$q_2$.
Moreover, we have $u=q_0^3$, and $v$ consists of $(n-8)/4$ copies of~$q_4$.
If $n=8$, then $v=\varepsilon$ and the unique centroid of~$T''$ is $b''$, not $c''=b'$, a contradiction.
If $n\geq 12$, then the rightmost copy of~$q_4$ in~$v$ has $1w0=q_2$ as its ccw next $c''$-subtree, implying that rule~(ii) in step~(T2) applies to this subtree.
However, the $c''$-subtree $t''=110u0=1101010100$ is distinct from $q_0,q_1,q_2,q_4$, so it was selected by rule~(iii), which has lower priority, a contradiction.

If rule~(o1) applies to~$t''$ in~$T''$, i.e., $\varphi(T'')$ is odd and $a$ is the rightmost leaf of~$t''$, then we have $u=\varepsilon$.
Moreover, we have $c''\neq b'$, as otherwise rule~(q137) would apply to~$t''$ instead of rule~(o1), i.e., the centroid of~$T''$ is in~$v$, but not at the root of this subtree.
Consequently, the push rule~(o2) does not apply to~$t'$ in~$T'$, as $a$ is not the rightmost leaf of~$t'$ due to the edge~$(b',b'')$.
The push rule~(q24) does not apply to~$t'$ either, again due to the edge~$(b',b'')$, which is missing in~$q_2$ and~$q_4$.
If the push rule~(q5) applies to~$t'$, i.e., $t'$ is a path with $q_5$ attached to it, then we have $u=\varepsilon$, $w=q_0$ and $t''$ is a path with $q_7$ attached to it.
However, then rule~(q7) applies to~$t''$ in~$T''$, and not rule~(o1), a contradiction.

If $T,T',T''\in\cS^2$, then the above four cases for the pull rules applied to~$t''$ in~$T''$ can be adapted as follows:
The cases where rule~(q8) or~(o1) applies to~$t''$ are the same, only the cases where the rule~(q137) or~(e) applies have to be modified, due to the usage of Lemma~\ref{lem:circular}, which only applies if the centroid is unique.

If rule~(q137) or~(e) applies to~$t''$, then we have $c''=b'$ as before.
Now $w=\varepsilon$ follows from the fact that in step~(T2), $t''$ is selected as the first active $c''$-subtree in ccw order that is not a single edge.
But then $a$ is thin in~$T'$, a contradiction.

\textit{Subcase 2c(ii):} $n$ is odd, $T'=[d_n]\in\cS^2$ and $T=[\push(d_n')]\in\cS^1$.
By rule~(D), it suffices to consider the case that the centroids of~$T'$ are $b$ and~$b'$ and $c'=b'$, i.e., we have $u=\varepsilon$ and $v=w=(10)^{(n-3)/2}$.
Then $c''=b'$ is the unique centroid of~$T''$, and we have $11u0=1100=q_1$.
However, this $c''$-subtree violates the conditions of Lemma~\ref{lem:circular}~(a), as $w\neq \varepsilon$ and $v$ contains at least one $c''$-subtree that is a single edge, by the assumption $n\geq 4$.

This completes the proof.
\end{proof}

\subsection{Interaction with switches}

In this section we desribe how the newly defined spanning tree~$\cT_n$ interacts with the switches~$\tau_{n,1}$, $\tau_{n,2}$ and $\tau_{n,d,z}$ used in the proof of Theorem~\ref{thm:star}.

Recall the definition of conformal and usable switches given in Section~\ref{sec:interaction}, the latter of which was based on a set of gluing pairs~$G\subseteq G_n$.
For the following definitions, we now also assume that $G$ is nesting-free.
We say that a usable switch~$\tau$ is \emph{reversed}, if the $f$-edge of~$\tau$ lies on the reversed path of one of the gluing cycles~$\sigma^i(C(\hx,\hy))$, $(\hx,\hy)\in G$, for some $i\geq 0$, i.e., on the path $\sigma^i((\hx^1,\ldots,\hx^5))$.

The assumption that $G$ is nesting-free guarantees that along the periodic paths constructed from gluing the $f$-edges of all switches that are not reversed are oriented conformly, and oppositely to the $f$-edges of all reversed switches.
Consequently, for an $f$-conformal usable switch~$\tau$ that is not reversed, the modifications to the flip sequence described by~\eqref{eq:switch} change the shift by $+\lambda(\tau)$, and for an $f^{-1}$-conformal usable switch that is not reversed, they change the shift by $+\lambda(\tau^{-1})=-\lambda(\tau)$.
On the other hand, if the switch is reversed, then the sign of these changes is inverted.
We refer to this quantity as the \emph{effective shift} of~$\tau$; see Figure~\ref{fig:switch}.
The effective shift of~$\tau$ is the shift~$\lambda(\tau)$ with the sign determined by $f$-conformality (multiplied by~$-1$ iff $f^{-1}$-conformal) and whether the switch is reversed (multiplied by~$-1$ iff reversed).
The effective shift describes precisely, including the correct sign, by how much the shift of the flip sequence along a periodic path changes when applying the switching technique using this switch.

\begin{lemma}
\label{lem:reversed}
Let $\cT_n$ be the spanning tree of~$\cH_n$ defined in Section~\ref{sec:Tn-new}.
For $n\geq 4$, the switch $\tau_{n,1}$ is reversed w.r.t.~$G(\cT_n)$, and the switch~$\tau_{n,2}$ is not reversed w.r.t.\ any set of gluing pairs~$G\subseteq G_n$.
For $n,d,z$ as in Lemma~\ref{lem:taundz}, the switch $\tau_{n,d,z}$ is usable and not reversed w.r.t.~$G(\cT_n$).
\end{lemma}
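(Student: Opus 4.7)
The plan is to handle the three claims separately.

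For $\tau_{n,2}$, I argue directly from the bitstring structure. The $f$-edge is $(y',x)=((10)^n1,(10)^n0)$, so $t(x)=(10)^n$. If this edge lay on the reversed path of some gluing cycle $\sigma^i(C(\hat{x},\hat{y}))$, then matching its $A_n$-endpoint against $\hat{x}^2$ or $\hat{x}^4$ and using \eqref{eq:Txi} would force $\rho^k(\hat{x})=(10)^n$ for some $k\in\{1,2\}$. Since $\lambda([s_n])=2$, the rotation orbit of $(10)^n$ consists of the two rooted trees $s_n$ and $(10)^n$ only. The choice $\hat{x}=s_n$ forces $(\hat{x},\hat{y})=(s_n,s_n')$, explicitly excluded from $G_n$ by \eqref{eq:gluing}, while $\hat{x}=(10)^n$ fails because a pullable tree must begin with $110$. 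Hence $\tau_{n,2}$ is not reversed with respect to \emph{any} $G\subseteq G_n$.

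For $\tau_{n,1}$, the $f$-edge is $(y,x)=(1^n0^n1,1^n0^{n+1})$, and I exhibit one explicit witnessing gluing pair in $G(\cT_n)$. Let $T$ be the plane tree on $n$ edges that is the path. A uniform case analysis through steps~(T1)--(T3) of Section~\ref{sec:Tn-new}, for $n$ even (unique central centroid) and $n$ odd (two central centroids), shows that the selected active $c$-subtree is $t_\hi=1^{\ell}q_10^{\ell}$ with $\ell=\lfloor(n-4)/2\rfloor$ (via the ``first $t_i\neq q_0$'' rule in the two-centroid case, condition~(iii) for single-centroid $n\geq 6$, and condition~(iv) for $n=4$), and rule~(q137) with $j=1$ pulls the deepest leaf. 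A short Dyck-word computation identifies the resulting gluing pair as $(\hat{x},\hat{y})$ with $\hat{x}=1100\cdot 1^{n-2}0^{n-2}$, i.e.\ $u=\varepsilon$ and $v=1^{n-2}0^{n-2}$ in the notation of \eqref{eq:gluing}. Plugging these into \eqref{eq:xyi} gives $\hat{x}^3=0\cdot 1^{n+1}\cdot 0^{n-1}$ and $\hat{x}^4=00\cdot 1^n\cdot 0^{n-1}$, and a direct shift check yields $\sigma^2(\hat{x}^3)=y$ and $\sigma^2(\hat{x}^4)=x$. Thus $\sigma^2((\hat{x}^3,\hat{x}^4))=(y,x)$ lies on the reversed path of $\sigma^2(C(\hat{x},\hat{y}))$, so $\tau_{n,1}$ is reversed with respect to $G(\cT_n)$.

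For $\tau_{n,d,z}$, usability with respect to the new $\cT_n$ of Section~\ref{sec:Tn-new} needs re-establishing (Lemma~\ref{lem:taundz} addresses only the old $\cT_n$ from Section~\ref{sec:Tn}); the same centroid argument about $[\rho^3(t(x))]$ carries over, modulo checking that no subcase of~(T1)--(T3) produces the offending rooting. To rule out reversal, I analyze the two candidate rooted trees $\hat{x}_1=\rho^{-1}(t(x))$ and $\hat{x}_2=\rho^{-2}(t(x))$ coming from the requirements $t(\hat{x}^2)=t(x)$ and $t(\hat{x}^4)=t(x)$, respectively, where $t(x)=z(1z)^{(c-1)/2}(0z)^{(c-1)/2}$ from Lemma~\ref{lem:taundz}. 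Candidate $\hat{x}_2$ is immediately ruled out because the pattern $t(\hat{x}^4)=1u1v00$ has depth $\geq 1$ strictly between its endpoints, whereas $t(x)$ already returns to depth $0$ at position $|z|=d-1<2n$ since $z$ is itself a Dyck word. Candidate $\hat{x}_1$ similarly fails for $d\geq 5$ because the second bit of $z$ (hence of $t(x)$) is $1$, while in $10u1v0$ it is $0$; only for $d=3$ does this case survive, and there the position of the last internal return to depth $0$ in $t(x)$ pins $\hat{x}_1$ down uniquely. The main obstacle is then to show that this $\hat{x}_1$ is not the rooting chosen by $\cT_n$ for the plane tree $[\hat{x}_1]$: I plan to locate the centroid(s) of $[\hat{x}_1]$ from the self-similar block form of $t(x)$ and verify, subcase by subcase in~(T1)--(T3), that the canonical centroid-based rooting differs from $\hat{x}_1$, so that $\cT_n$ assigns a different gluing pair to $[\hat{x}_1]$.
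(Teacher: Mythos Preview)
Your handling of $\tau_{n,1}$ and $\tau_{n,2}$ is essentially the paper's argument: the same witnessing gluing pair $\hat x=1100\,1^{n-2}0^{n-2}$ for $\tau_{n,1}$, and the same two-element rotation orbit $\{s_n,(10)^n\}$ for $\tau_{n,2}$. For $\tau_{n,d,z}$ your candidate analysis is also close in spirit; your depth-return argument for $\hat x_2$ even treats all $d\ge 3$ uniformly, whereas the paper splits into $\rho^{-2}(t(x))=111\cdots$ for $d\ge 5$ and $\rho^{-2}(t(x))=10\cdots$ for $d=3$.

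There is one genuine gap, in the $d=3$ case for $\hat x_1=\rho^{-1}(t(x))$. You propose to verify that the rules (T1)--(T3) applied to the plane tree $[\hat x_1]$ do not produce the rooting $\hat x_1$. But this only excludes $\hat x_1$ arising via a \emph{pull} rule. A gluing pair $(\hat x_1,\hat y_1)\in G(\cT_n)$ can equally well come from a \emph{push} rule applied to the plane tree $[\hat y_1]=[\pull(\hat x_1)]$, and your plan does not address that possibility. The paper sidesteps the entire case analysis with a one-line observation: for $d=3$ one checks directly that $\rho^{-1}(t(x))$ has a leaf as its root (in the decomposition $t(x)=u\,1\,v\,0$ one finds $v=\varepsilon$), and it was already established in the proof of Lemma~\ref{lem:Tn-new} (the interleaving-free part) that \emph{every} $\hat x$ with $(\hat x,\hat y)\in G(\cT_n)$---whether produced by a pull rule or a push rule---has a non-leaf root. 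This disposes of $\hat x_1$ immediately, with no centroid computation. The same global fact about $G(\cT_n)$ also streamlines the usability argument you flag as needing rechecking.
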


\begin{proof}
We first consider the switch $\tau_{n,1}=1^n\ol{0}0^{n-1}\ul{0}=:(x,y,y')$.
Note that $x\in A_n$ and $y\in B_n$ differ in the last bit, and $(y,x)$ is the $f$-edge of~$\tau_{n,1}$ by Lemma~\ref{lem:tau12}.
To show that $\tau_{n,1}$ is reversed, consider the gluing pair $(\hx,\hy)=(11001^{n-2}0^{n-2},10101^{n-2}0^{n-2})$, which is in $G(\cT_n)$ by rule~(q137) in step~(T3) of the definition of~$\cT_n$.
As we have $t(\hx^4)=\rho^2(t(\hx^0))=t(x)$ (recall Proposition~\ref{prop:Fn}~(i)), the $f$-edge $(y,x)$ equals $\sigma^i((\hx^3,\hx^4))$ for some $i\geq 0$, proving that it lies on the reversed path of $\sigma^i(C(\hx,\hy))$.

We now consider the switch $\tau_{n,2}=(10)^{n-1}1\ul{0}\ol{0}=:(x,y,y')$.
Note that $x\in A_n$ and $y'\in B_n$ differ in the last bit, and $(y',x)$ is the $f$-edge of~$\tau_{n,2}$ by Lemma~\ref{lem:tau12}.
As $x\in A_n$ and $y'\in B_n$, it suffices to show that the $f$-edge of~$\tau_{n,2}$ is distinct from the $f$-edges $\sigma^i((\hx^1,\hx^2))$ and $\sigma^i((\hx^3,\hx^4))$, $i\geq 0$, of any gluing cycle $C(\hx,\hy)$ with $(\hx,\hy)\in G$.
We have $t(x)=(10)^n$ and therefore $\rho^{-1}(t(x))=1(10)^{n-1}0=s_n$ and $\rho^{-2}(t(x))=t(x)$.
However, neither $s_n$ nor $t(x)=10\cdots$ are pullable trees by the definition~\eqref{eq:gluing}, so we must have $t(\hx^2)=\rho(t(\hx^0)))\neq \rho(s_n)=t(x)$ and $t(\hx^4)=\rho^2(t(\hx^0))\neq \rho^2(t(x))=t(x)$ (recall Proposition~\ref{prop:Fn}~(i)).

Lastly, we consider the switch $\tau_{n,d,z}=(1\,z)^{(c-1)/2}\,\ol{0}\,z\,(0\,z)^{(c-3)/2}\,\ul{0}\,z=:(x,y,y')$.
Note that $x\in A_n$ and $y\in B_n$ differ in the bit before the suffix~$z$, and $(y,x)$ is the $f$-edge of~$\tau_{n,d,z}$ by Lemma~\ref{lem:taundz}.
The tree $t(x)$ is given by~\eqref{eq:tx1}.
The argument that~$\tau_{n,d,z}$ is usable for the spanning tree~$\cT_n$ defined in Section~\ref{sec:Tn-new} is analogous to the argument given in the proof of Lemma~\ref{lem:taundz}, as all leaves selected in step~(T3) of the definition of~$\cT_n$ are either leftmost or rightmost leaves of the selected subtree, or this subtree is isomorphic to a path with~$q_5$ attached to it (but $t(x)$ has no such subtrees).
It remains to prove that the switch $\tau_{n,d,z}$ is not reversed.

We first consider the case $d\geq 5$.
We have $\rho^{-1}(t(x))=111\cdots$ and $\rho^{-2}(t(x))=111\cdots$, i.e., none of these two trees is pullable by the definition~\eqref{eq:gluing}.
It follows that $t(\hx^2)\neq t(x)$ and $t(\hx^4)\neq t(x)$ for all $(\hx,\hy)\in G(\cT_n)$, proving that $\tau_{n,d,z}$ is not reversed w.r.t.~$G(\cT_n)$.
We now consider the case $d=3$.
We see from~\eqref{eq:tx2} that the tree $\rho^{-1}(t(x))$ has a leaf as its root, and we have $\rho^{-2}(t(x))=10\cdots$.
As all pullable trees~$\hx$ with $(\hx,\hy)\in G(\cT_n)$ have a root that is not a leaf, and the latter tree is not pullable by the definition~\eqref{eq:gluing}, we have $t(\hx^2)\neq t(x)$ and $t(\hx^4)\neq t(x)$ for all $(\hx,\hy)\in G(\cT_n)$, proving that $\tau_{n,d,z}$ is not reversed w.r.t.~$G(\cT_n)$.
\end{proof}

\subsection{Proof of Theorem~\ref{thm:algo}}

Our algorithm to compute a star transposition ordering of $(n+1,n+1)$-combinations is a faithful implementation of the constructive proof of Theorem~\ref{thm:star} presented in Section~\ref{sec:star-proof}, which also works with the spanning tree~$\cT_n$ of~$\cH_n$ defined in Section~\ref{sec:Tn-new} (in particular, the switch~$\tau_{n,d,z}$ is usable by Lemma~\ref{lem:reversed}).
The effective shifts of the switches $\tau_{n,1}$, $\tau_{n,2}$ and $\tau_{n,d,z}$ used in the proof, i.e., the signs $\gamma_1,\gamma_2$ and~$\gamma_d$ in~\eqref{eq:new-shifts}, can now be determined explicitly.
Specifically, from Lemma~\ref{lem:tau12}, \ref{lem:taundz} and~\ref{lem:reversed} we obtain that
\begin{equation}
\label{eq:gamma12d}
\gamma_1=(-1)\cdot (-1)=+1, \quad \gamma_2=(+1)\cdot (+1)=+1, \quad \gamma_d=(-1)\cdot (+1)=-1.
\end{equation}
In those products, the first factor is $-1$ iff the switch is $f^{-1}$-conformal and the second factor is $-1$ iff the switch is reversed.

\begin{proof}[Proof of Theorem~\ref{thm:algo}]

In the following we outline the key data structures and computation steps performed by our algorithm.
For more details, see the C++ implementation available at~\cite{cos_middle}.

The input of the algorithm is the integer $n\geq 1$, the initial combination~$\hx$ and the desired shift~$\hs$ coprime to~$2n+1$.
Upon initialization, we first compute the value of the $n$th Catalan number~$C_n$ modulo~$2n+1$ in time~$\cO(n^2)$, using Segner's recurrence relation.
By~\eqref{eq:lambda-Cn}, this gives us the shift~$s$ of the flip sequence obtained from gluing without modifications.
We then test whether~$s$ is coprime to~$2n+1$, and compute an appropriate set of one or two switches such that the shift~$s'$ of the modified flip sequence, computed using~\eqref{eq:new-shifts} and~\eqref{eq:gamma12d}, is coprime to~$2n+1$.
In particular, the definition of~$d$ used in~\eqref{eq:spd} involves computing the prime factorization of~$2n+1$.
From~$s'$ we compute the scaling factor~$s'^{-1}\hs$ and the corresponding initial combination~$x$ such that~$\hx$ is obtained from~$x$ by permuting columns according to the rule $i\mapsto s'^{-1}\hs i$ (simply apply the inverse permutation).
These remaining initialization steps can all be performed in time~$\cO(n)$.
All further computations are then performed with~$x$, and whenever a flip position is computed for~$x$, it is scaled by $s'^{-1}\hs$ before applying it to~$\hx$.

Throughout the algorithm, we maintain the following data structures:
\begin{itemize}[itemsep=0ex,parsep=0.5ex,leftmargin=2ex]
\item the bitstring representation~$x\in A_n\cup B_n$ of the current $(n+1,n+1)$-combination;
\item the position $\ell(x)$ from where to read the rooted tree~$t(x)$ in~$x$;
\item the plane tree~$T=[t(x)]$ and its centroid(s).
\end{itemize}
The space required by these data structures is clearly~$\cO(n)$.

There are two types of steps that we encounter in our algorithm:
An \emph{$f$-step} is simply an application of the mapping~$f$ defined in~\eqref{eq:f} to the current bitstring~$x$, which corresponds to following one of the basic flip sequences.
Such a step incurs only a rotation of the tree~$t(x)$ (recall~\eqref{eq:rot}), and therefore the plane tree~$T=[t(x)]=[\rho(t(x))]$ is not modified.
On a subpath that is reversed by a gluing cycle, we apply~$f^{-1}$ and inverse tree rotation~$\rho^{-1}(t(x))$ instead.
A \emph{pull/push step} is more complicated, and corresponds to following one of the edges of a gluing cycle~$\sigma^i(C(\hx,\hy))$, $i\geq 0$, $(\hx,\hy)\in G(\cT_n)$ for some arc~$([\hx],[\hy])$ in the spanning tree~$\cT_n$.
Such a step also modifies the plane tree~$T=[t(x)]$ by applying a pull or push operation to one of its leaves.
All these updates can easily be done in time~$\cO(n)$.

For deciding whether to perform an $f$-step or a pull/push step, the following computations are performed on the current plane tree~$T=[t(x)]$, following the steps~(T1)--(T3) described in Section~\ref{sec:Tn-new}:
\begin{itemize}[itemsep=0ex,parsep=0.5ex,leftmargin=2ex]
\item compute a centroid~$c$ of~$T$ and its potential~$\varphi(c)$ as in step~(T1) in time~$\cO(n)$ (see~\cite{MR396308});
\item compute the lexicographic subtree ordering as in step~(T1) in time~$\cO(n)$.
In the case where the centroid is unique, this is achieved by Booth's algorithm~\cite{MR585391}.
Specifically, to compute the lexicographically smallest ccw ordering $(t_1,\ldots,t_k)$ of the $c$-subtrees of~$T$ we insert $-1$s as separators between the bitstring representations $t_1,\ldots,t_k$ of the subtrees, i.e., we consider the string~$z:=(-1,t_1,-1,\ldots,-1,t_k)$.
This trick makes Booth's algorithm return a cyclic rotation of~$z$ that starts with~$-1$, and it is easy to check that this rotation is also the one that minimizes the cyclic subtree ordering~$(t_1,\ldots,t_k)$.
\item compute a $c$-subtree of~$T$ and one of its leaves as in steps~(T2) and~(T3) in Section~\ref{sec:Tn-new} in time~$\cO(n)$.
\end{itemize}
Overall, the decision which type of step to perform next takes time~$\cO(n)$ to compute.

Whenever we encounter a switch in the course of the algorithm, which can be detected in time~$\cO(n)$, we perform a modified flip as described by~\eqref{eq:switch}.
Each time this happens, the position~$\ell(x)$ has to be recomputed, while the plane tree~$T=[t(x)]$ does not change.

Summarizing, this algorithm runs in time~$\cO(n)$ in each step, using $\cO(n)$ memory in total, and it requires time~$\cO(n^2)$ for initialization.
\end{proof}

We remark here that if $2n+1$ is prime, then a straightforward calculation shows that $C_n=2(-1)^n \pmod{2n+1}$.
In particular, $C_n$ is coprime to~$2n+1$ in this case, and no switches are necessary.
Unfortunately, in general the behavior of the sequence $C_n\bmod{2n+1}$ seems quite hard to understand.
The first few entries are $-2,2,-2,-4,-2,2,-6,2,-2,-4,-2,12,-5,\allowbreak 2,-2,\ldots=1,2,5,5,9,2,9,2,17,17,21,12,22,2,29,\ldots$.

\section*{Acknowledgements}

We thank the anonymous reviewers of this paper for their numerous helpful comments.
In particular, one referee pointed out the scaling trick now described in Section~\ref{sec:idea-shift}, which led to substantial simplifications of our proofs.

\bibliographystyle{alpha}
\bibliography{../refs}

\end{document}